\newtheorem{theorem}{Theorem}[section]
\newtheorem{lemma}[theorem]{Lemma}
\newtheorem{proposition}[theorem]{Proposition}
\theoremstyle{definition}
\newtheorem{remark}[theorem]{Remark}
\numberwithin{equation}{section}
\newcommand{\dd}{\;\mathrm{d}}
\DeclareMathOperator{\loc}{loc}
\begin{document}

\title[Finite-time blowup in indirect chemotaxis]{Finite-time blowup in a fully parabolic chemotaxis model involving indirect signal production
}

\author[X. Mao]{Xuan Mao}
\address{School of Mathematics, Southeast University, Nanjing 211189, P. R. China}
\email{230218181@seu.edu.cn}

\author[M. Liu]{Meng Liu}
\address{Department of Applied Mathematics, Anhui University of Technology, Ma'anshan
243002, P. R. China}
\email{2280563463@qq.com}

\author[Y. Li]{Yuxiang Li}
\address{School of Mathematics, Southeast University, Nanjing 211189, P. R. China}
\email{lieyx@seu.edu.cn}

\thanks{Supported in part by National Natural Science Foundation of China (No. 12271092, No. 11671079) and the Jiangsu Provincial Scientific Research Center of Applied Mathematics (No. BK20233002).}

\subjclass[2020]{35B44; 35K51; 35Q92; 92C17}%
\keywords{chemotaxis; indirect signal production; finite-time blowup}

\begin{abstract}
This paper is concerned with a parabolic-parabolic-parabolic chemotaxis system with indirect signal production, modelling the impact of phenotypic heterogeneity on population aggregation
\begin{equation*}
  \begin{cases} 
u_t = \Delta u - \nabla\cdot(u\nabla v),\\ 
v_t = \Delta v - v + w,\\ 
w_t = \Delta w - w + u,
  \end{cases}
\end{equation*} 
posed on a ball in $\mathbb R^n$ with $n\geq5$, 
subject to homogeneous Neumann boundary conditions.
The system has a four-dimensional critical mass phenomenon concerning blowup in finite or infinite time 
according to the seminal works of Fujie and Senba [J. Differential Equations, 263
(2017), 88--148; 266 (2019), 942--976]. 
We prove that for any prescribed mass $m > 0$, there exist radially symmetric and nonnegative initial data $(u_0,v_0,w_0)\in C^0(\overline{\Omega})\times C^2(\overline{\Omega})\times C^2(\overline{\Omega})$ with $\int_\Omega u_0 = m$ such that the corresponding classical solutions blow up in finite time. The key ingredient is a novel integral inequality for the cross-term integral $\int_\Omega uv$ 
constructed via a Lyapunov functional. 
\end{abstract}

\maketitle

\section{Introduction}\label{introduce section}

This paper is concerned with finite-time singularity formation of classical solutions to the following parabolic-parabolic-parabolic chemotaxis model \eqref{sys: ks isp pp/ep/e} ($\tau=1$ and $\varepsilon=1$) accounting for indirect signal production 
\begin{align}
  \begin{cases}
    \label{sys: ks isp pp/ep/e}
      u_t = \Delta u - \nabla \cdot(u\nabla v),&  x\in\Omega, t>0,\\
      v_t =  \Delta v - v + w,&   x\in\Omega,	t>0,\\
      \tau w_t  = \varepsilon\Delta w - w + u, &   x\in\Omega, t > 0,\\
      \partial_\nu u = \partial_\nu v = \varepsilon\partial_\nu w = 0 , &  x\in\partial\Omega, t >0,\\
      (u(\cdot, 0), v(\cdot, 0), \tau w(\cdot,0)) = (u_0, v_0, \tau w_0), & x\in\Omega,
  \end{cases}
\end{align}
in a bounded domain $\Omega\subset\mathbb R^n$ with smooth boundary for some $n\in\mathbb N$, 
where $\partial_\nu$ denotes the derivative with respect to the outward normal of $\partial\Omega$,
and the initial functions $u_0\in C^0(\overline\Omega)$ and 
$v_0, w_0\in C^2(\overline\Omega)$ are nonnegative with $\partial_\nu v_0 = \partial_\nu w_0 = 0$ on $\partial\Omega$. Here, $\tau,\varepsilon\in\{0,1\}$.

The system~$\eqref{sys: ks isp pp/ep/e}_{\tau,\varepsilon}$ was proposed to describe complex and realistic biological phenomena involving cluster attack of mountain pine beetles~\cite{Strohm2013} $\eqref{sys: ks isp pp/ep/e}_{1,0}$ ($\varepsilon=1$ and $\tau=0$) 
and modeling effects of phenotypical heterogeneity~\cite{Macfarlane2022} \eqref{sys: ks isp pp/ep/e}. 
The model~$\eqref{sys: ks isp pp/ep/e}_{1,0}$ depicts, for example, taxis-driven migration processes of two beetle phenotypes: flying beetles performs chemotactic movement towards location of higher concentration $v$ of pheromones mediated by nesting beetles, 
where $u$ and $w$ denote the density of flying beetles and nesting beetles, respectively. 
See the recent review \cite{Winkler2025} for other biological backgrounds of $\eqref{sys: ks isp pp/ep/e}_{\tau,\varepsilon}$, e.g., a distributed delayed diffusion modelling spatial memory movement of animals~\cite{Shi2021} with dependence on the past time of weak type $\eqref{sys: ks isp pp/ep/e}_{0,0}$ and strong type~\eqref{sys: ks isp pp/ep/e}.

\subsection*{Finite-time blowup \& Lyapunov functional}
The minimal chemotaxis model~$\eqref{sys: ks isp pp/ep/e}_{0,0}$ ($\tau = 0$ and $\varepsilon = 0$) was originally proposed by Keller and Segel~\cite{Keller1970} to investigate the aggregation of cells. Besides random walk, migration of cells $u$ is oriented along the gradient of chemical concentration $v$ secreted by themselves. This self-organized system~$\eqref{sys: ks isp pp/ep/e}_{0,0}$ 
exhibits a competitive mechanism between diffusion and chemotaxis-driven aggregation. 
Particularly, blowup---the extreme form of aggregation---may occur in finite time under certain conditions: 
\begin{itemize} 
\item No solution of $\eqref{sys: ks isp pp/ep/e}_{0,0}$ blows up in one dimension~\cite{Osaki2001}. 
\item In a planar domain, blowup may occur in finite or infinite time if the initial mass $m := \int_\Omega u_0$ exceeds the critical mass $4\pi$ but not equals to an integer multiple of $4\pi$ \cite{Horstmann2001}; all solutions remain globally bounded if $m < 4\pi$ (or $8\pi$ with symmetry assumptions) \cite{Nagai1997}. 
In a disk, there exist radially symmetric initial data with mass $m\in(8\pi,\infty)$ such that the solutions blow up in finite time \cite{Mizoguchi2014}.
\item In a ball of higher dimensions, for any prescribed initial mass $m > 0$, there exist radially symmetric initial data $(u_0,v_0)$ with mass $m=\int_\Omega u_0$ that lead to finite-time blowup~\cite{Winkler2013}.
\end{itemize}

Due to strong coupling of fully parabolic-type cross-diffusion, detecting blowup tightly relies on the energy dissipation structure of $\eqref{sys: ks isp pp/ep/e}_{\tau,\varepsilon}$, i.e., the identity (cf. \cite[Lemma~3.3]{Nagai1997}, \cite{Laurencot2019} and \cite[Proposition~6.1]{Fujie2017})
\begin{equation}
  \label{eq: dfcdt}
  \mathcal{F}'_{\tau,\varepsilon} = - \mathcal{D}_{\tau,\varepsilon}, 
  \quad \text{for all } t\in(0,T_{\max})
\end{equation} 
holds along the solution curve $\{(u(\cdot, t), v(\cdot, t), \tau w(\cdot, t))\}$,
where the Lyapunov functional $\mathcal{F}_{\tau,\varepsilon}(t) = \mathcal{F}_{\tau,\varepsilon}(u,v,w) := \mathcal{F}_{\tau,\varepsilon}(u,v)$ and its dissipation rate $\mathcal{D}_{\tau,\varepsilon}(t) = \mathcal{D}_{\tau,\varepsilon}(u,v)$ are given as follows,
\begin{equation}
  \label{sym: energy and dissipation rate}
  \begin{aligned}
    \mathcal{F}_{\tau,\varepsilon}(u,v) 
    &:= \int_\Omega u\ln u 
    - \int_\Omega uv 
    + \frac{\tau}{2}\int_\Omega v_t^2
    + \frac{\varepsilon}{2}\int_\Omega |\Delta v|^2
    + \frac{1+\varepsilon}{2}\int_\Omega |\nabla v|^2 
    + \frac{1}{2}\int_\Omega v^2\\
    &\;= \int_\Omega u\ln u 
    - \int_\Omega uv 
    + \frac{\tau}{2}\int_\Omega |\Delta v - v + w|^2
    + \frac{1}{2}\int_\Omega (\varepsilon\Delta v -v)(\Delta v - v),\\
    \mathcal{D}_{\tau,\varepsilon}(u,v) 
    &:= \int_\Omega u|\nabla (\ln u - v)|^2 
    + (\tau+\varepsilon)\int_\Omega |\nabla v_t|^2
    + (\tau+1)\int_\Omega v_t^2.
  \end{aligned}
\end{equation}
Horstmann and Wang~\cite{Horstmann2001} proposed to link the initial energy $\mathcal{F}_{\tau,\varepsilon}(u_0,v_0)$ and stationary energy $\mathcal{F}_{\tau,\varepsilon}(u_\infty,v_\infty)$ via dissipation. 
By absurdum, finite- or infinite-time blowup enforced by low initial energy can be inferred under conditions that stationary energies have lower bounds, and that initial data with arbitrarily large negative energy can be constructed. This idea was applied in~\cite{Horstmann2001,Winkler2010b} for $\eqref{sys: ks isp pp/ep/e}_{0,0}$, \cite{Laurencot2019} for $\eqref{sys: ks isp pp/ep/e}_{1,0}$ and \cite{Fujie2019,Mao2024a} for $\eqref{sys: ks isp pp/ep/e}_{1,1}$. 

Concerning finite-time blowup, Winkler~\cite{Winkler2013} originally paved the way to link the energy to its dissipation rate in a refined form 
\begin{equation}
  \label{eq: functional inequality}
  - \mathcal{F}_{0,0} \leq \int_\Omega uv + |\Omega|/e \leq C\mathcal{D}_{0,0}^{\theta} + C
  \quad \text{for some } C>0 \text{ and } \theta\in(0,1)
\end{equation}
for a large class of radially symmetric function pairs $(u,v)=(u(r),v(r))$, which, in particular, includes the radial trajectories of the system~$\eqref{sys: ks isp pp/ep/e}_{0,0}$.  
This result leads to a superlinear differential inequality for the quantity $-\mathcal{F}_{0,0}$ by reversing the inequality~\eqref{eq: functional inequality} in case of low initial energy $\mathcal{F}_{0,0}(u_0,v_0)$,
which, in turn, implies occurrence of finite-time blowup. 
Functional inequalities of the form \eqref{eq: functional inequality} have been established in various chemotaxis models involving volume-filling~\cite{Cieslak2012,Cieslak2014,Cieslak2015,Laurencot2017,Hashira2018,Cao2025}, competing chemotaxis~\cite{Lankeit2021}, the Cauchy problem~\cite{Winkler2020a}, two-species chemotaxis~\cite{Li2014,Ha2024} and indirect chemotaxis~$\eqref{sys: ks isp pp/ep/e}_{0,1}$~\cite{Mao2024b}.

\subsection*{Indirect chemotaxis}

In the pioneer work~\cite{Tao2017}, Tao and Winkler pointed out that the indirect attractant production mechanism revealed in the J\"ager-Luckhaus variant of the beetles model~$\eqref{sys: ks isp pp/ep/e}_{1,0}$ where the second equation was modified to $\Delta v - \int_\Omega w/|\Omega| + w = 0$, has a temporal relaxation effect that prevents occurrence of finite-time blowup under two-dimensional and symmetry settings. They showed the variant admits a global classical solution and exhibits a novel infinite-time blowup critical mass $8\pi$ phenomenon. Lauren\c{c}ot~\cite{Laurencot2019} constructed the Lyapunov functional $\mathcal{F}_{1,0}$
of the parabolic-parabolic-ODE system~$\eqref{sys: ks isp pp/ep/e}_{1,0}$ and extended the results in \cite{Tao2017} to an arbitrary domain $\Omega\subset\mathbb{R}^2$.

For fully parabolic indirect-chemotaxis models \eqref{sys: ks isp pp/ep/e}, 
the indirect attractant mechanism can also reduce explosion-supporting potential due to strong parabolic smoothing effect.
Fujie and Senba~\cite{Fujie2017,Fujie2019} established global boundedness in physical spaces $n\leq3$, and constructed the Lyapunov functional $\mathcal{F}_{1,1}$ to 
identify a four-dimensional critical mass $64\pi^2$ phenomenon for classical solutions of \eqref{sys: ks isp pp/ep/e}:
\begin{itemize}
  \item  
  Under symmetry assumptions, $\int_\Omega u_0 < 64\pi^2$ implies the solution exists globally and remains bounded. 
  Without radial symmetry, the same conclusion holds for \eqref{sys: ks isp pp/ep/e} under certain mixed boundary conditions. 
  \item For any $m \in(64\pi^2,\infty)\setminus 64\pi^2\mathbb N$, there exist initial data satisfying 
  $\int_\Omega u_0 = m$ such that 
  solutions of \eqref{sys: ks isp pp/ep/e} under mixed boundary conditions blow up in finite or infinite time.
\end{itemize}
In five and higher dimensions, Mao and Li~\cite{Mao2024a} showed that for any $m>0$, there exist initial data with $\int_\Omega u_0 = m$ such that solutions of the system \eqref{sys: ks isp pp/ep/e} blow up in finite or infinite time with symmetry assumptions.

Results on finite-time singularity formation of the system \eqref{sys: ks isp pp/ep/e} are limited to certain simplified versions of elliptic type. 
Tao and Winkler~\cite{Tao2025} considered the J\"ager-Luckhaus variant and finite-time blowup was detected via robust comparison methods, 
which are applicable to cooperative systems incorporating volume-filling~\cite{Painter2002} and phenotype switching~\cite{Macfarlane2022}.   
Mao and Li~\cite{Mao2024b} analyzed the Nagai-type variant $\eqref{sys: ks isp pp/ep/e}_{0,1}$ and deduced finite-time blowup in five and higher dimensions via a functional inequality of the form \eqref{eq: functional inequality}. 

Some variants of indirect chemotaxis involving growth~\cite{Hu2016}, volume-filling~\cite{Ding2019}, phenotype switching~\cite{Laurencot2021,Painter2023,Laurencot2024}, haptotaxis~\cite{Chen2024}, the Cauchy problem~\cite{Xiang2025,Hosono2025} and rotation~\cite{Chen2025} were studied.

\subsection*{Main ideas and results}
Even though the functional inequality of the form \eqref{eq: functional inequality} has been applied to the Nagai-type variant $\eqref{sys: ks isp pp/ep/e}_{0,1}$, 
it may not work for the fully parabolic version \eqref{sys: ks isp pp/ep/e},
due to the problematic term $w_t$ involving the so-called distributed delayed diffusion~\cite{Shi2021}. Technically, we cannot expect that $w_t$ has certain uniform-in-time estimates even in $L^1$ because of $u\in L^\infty((0,T_{\max});L^1)$ merely. On the other hand, $w_t$ is unlike $v_t$ that can be suitably controlled by quantities from either $\mathcal{F}_{\tau,\varepsilon}$ or $\mathcal{D}_{\tau,\varepsilon}$.
More or less the memory effect~\cite{Shi2021} encourages us in construction of an integral-in-time inequality instead. Following the simple and far-reaching ideas of linking energy to its dissipation rate~\cite{Winkler2013}, we show ``memory" of the cross term $\int_\Omega uv$ has an upper bound of a sublinear power of itself in the form of 
\begin{equation} 
  \label{eq: integral inequality}
  \begin{aligned}
  \int_0^s\int_\Omega uv\dd x\dd t 
  &\leq C(1+s) \left(\int_0^s\mathcal{D}_{1,1}(t)\dd t + \frac{1}{2}\int_\Omega |\Delta v - v|^2 
  + \frac{1}{2}\int_\Omega v_t^2  + 1\right)^\theta \\
  &= C(1+s) \left(\mathcal{F}_{1,1}(0) + \int_\Omega uv - \int_\Omega u\ln u + 1\right)^\theta 
  \end{aligned}
  \end{equation}
valid along the radial trajectories of the system~\eqref{sys: ks isp pp/ep/e},
  where $\theta \in(0,1)$ and $C>0$. 
This inequality leads to a superlinear differential inequality for the function 
\begin{equation*}
  \Psi (s) := \int_0^s\int_\Omega u(v-\ln u)\dd x\dd t + \mathcal{F}_{1,1}(0)s + \ell s
  \quad \text{for } s\in(0,T_{\max})
\end{equation*}
with some large value $\ell > 1$ provided that $-\mathcal{F}_{1,1}(0)$ is sufficiently large. 
This, in turn, implies occurrence of finite-time blowup for initial data with low energy $\mathcal{F}_{1,1}(0)$. 

To be precise, we shall suppose that initial data $(u_0, v_0, w_0)$ consist of nonnegative and radially symmetric functions satisfying 
\begin{equation}
  \label{h: initial data}
  \begin{cases}
  0\leq u_0\in C^0(\overline{\Omega}),
  \quad 0\leq v_0,w_0\in C^2(\overline{\Omega})
  \quad \text{with } \partial_\nu v_0 = \partial_\nu w_0 \equiv 0 \text{ on } \partial\Omega\\
  \text{and } (u_0,v_0,w_0) \text{ is a triplet of radially symmetric functions on } \overline{\Omega},
  \end{cases} 
\end{equation}
with $\Omega = B_R := \{x\in\mathbb{R}^n\mid |x| < R\}$ for some $R>0$ and $n\in\mathbb{N}$.

Our main result states that low initial energy enforces finite-time blowup.

\begin{theorem}
  \label{thm: low energy enforced finite-time blowup}
  Assume $\Omega = B_R \subset\mathbb R^n$ for some $n\geq5$ and $R>0$.
  Let $\tilde{m} > 0$ and $A > 0$ be given.
  Then there exist positive constants $K(\tilde{m},A)$ and $T(\tilde{m},A)$ such that 
  if $(u_0,v_0,w_0)$ from the set 
  \begin{equation}
    \label{sym: mathcal B}
    \begin{aligned}
       \mathcal{B}(\tilde{m},A) &:= \{(u_0,v_0,w_0)\in  C^0(\overline{\Omega}) \times C^2(\overline{\Omega}) \times C^2(\overline{\Omega})
       \mid 
        u_0, v_0 \text{ and } v_0 \\ 
        & \text{ comply with } \eqref{h: initial data},   
        \|u_0\|_{L^1(\Omega)} < \tilde{m} \text{ and }
        \|w_0\|_{W^{1,2}(\Omega)} + \|v_0\|_{W^{2,2}(\Omega)} < A\},
    \end{aligned}
  \end{equation}
  satisfies $\mathcal{F}_{1,1}(u_0,v_0,w_0) < - K(\tilde{m},A)$,
  then the corresponding classical solution 
  given by Proposition~\ref{prop: local existence and uniqueness} blows up in finite time $T_{\max}(u_0,v_0,w_0) \leq T(\tilde{m},A)$.
  Here, $\mathcal F_{1,1}$ is defined in \eqref{sym: energy and dissipation rate}.
\end{theorem}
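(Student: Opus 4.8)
The plan is to apply the contradiction scheme of Winkler~\cite{Winkler2013} to the auxiliary function $\Psi$ introduced above. Abbreviate $M := -\mathcal{F}_{1,1}(u_0,v_0,w_0)$ and $y(s) := \int_\Omega u(\cdot,s)\bigl(v(\cdot,s)-\ln u(\cdot,s)\bigr)\dd x$, so that along the classical solution furnished by Proposition~\ref{prop: local existence and uniqueness} one has $\Psi'(s) = y(s) - M + \ell$ and $\Psi(s) = \int_0^s y(t)\dd t - (M-\ell)s$ for $s\in(0,T_{\max})$. Fix $T_0 := 1$ once and for all, and suppose, aiming at a contradiction, that $T_{\max} > T_0$; since $\int_\Omega uv$ and $\int_\Omega u\ln u$ are finite and continuous in $t$ on $[0,T_0]$ for a classical solution, we then have $\Psi\in C^1([0,T_0])$. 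The contradiction will be produced once $M$ is taken large enough.

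The main step is to trap $\Psi$ between the two sides of \eqref{eq: integral inequality}. For a lower bound, the dissipation identity \eqref{eq: dfcdt} shows that $\mathcal{F}_{1,1}$ is non-increasing, hence $-\mathcal{F}_{1,1}(s)\geq M$ on $(0,T_{\max})$; dropping the two nonnegative terms $\tfrac{1}{2}\int_\Omega v_t^2$ and $\tfrac{1}{2}\int_\Omega|\Delta v-v|^2$ in the expression \eqref{sym: energy and dissipation rate} for $\mathcal{F}_{1,1}$ gives $y(s)\geq -\mathcal{F}_{1,1}(s)\geq M$, and integrating yields $\Psi(s)\geq \ell s > 0$ on $(0,T_0]$. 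For an upper bound, the pointwise inequality $\sigma\ln\sigma\geq -1/e$ gives $-\int_\Omega u\ln u\leq |\Omega|/e$, hence $\Psi(s)\leq \int_0^s\int_\Omega uv\dd x\dd t + (|\Omega|/e + \ell - M)s$, and as soon as $M\geq |\Omega|/e + \ell$ this reduces to $\Psi(s)\leq \int_0^s\int_\Omega uv\dd x\dd t$. Combining these with the second form of \eqref{eq: integral inequality}, and noting that its bracket equals $y(s)-M+1$, which is positive (because $y(s)\geq M$) and, when $\ell\geq 1$, obeys $y(s)-M+1\leq y(s)-M+\ell = \Psi'(s)$, we arrive at
\begin{equation*}
  \Psi(s)\;\leq\;\int_0^s\int_\Omega uv\dd x\dd t\;\leq\;C(1+s)\bigl(y(s)-M+1\bigr)^\theta\;\leq\;C(1+T_0)\,\Psi'(s)^\theta ,\qquad s\in(0,T_0],
\end{equation*}
that is, $\Psi'(s)\geq c\,\Psi(s)^{1/\theta}$ with $1/\theta>1$ and $c:=\bigl(C(1+T_0)\bigr)^{-1/\theta}>0$, where $C>0$ is the constant from \eqref{eq: integral inequality}, depending only on $n$, $R$ and the bounds $\tilde{m}$, $A$ defining $\mathcal{B}(\tilde{m},A)$ in \eqref{sym: mathcal B}; with $T_0$ frozen, $c$ is a fixed positive number.

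To conclude I select the remaining constants in the correct order. Since $\Psi(T_0/2)\geq \ell T_0/2>0$, integrating $\Psi'\,\Psi^{-1/\theta}\geq c$ over $[T_0/2,s]$ produces
\begin{equation*}
  \Psi(s)^{-(1/\theta-1)}\;\leq\;\bigl(\ell T_0/2\bigr)^{-(1/\theta-1)} - \bigl(1/\theta-1\bigr)c\,(s-T_0/2) ,
\end{equation*}
whose right-hand side becomes negative before $s$ reaches $T_0$ provided $\bigl(\ell T_0/2\bigr)^{-(1/\theta-1)} < \bigl(1/\theta-1\bigr)c\,T_0/2$, which holds for all sufficiently large $\ell$ because $1/\theta-1>0$ and $c$, $T_0$ are already fixed. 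As the left-hand side is always positive, this forces $\Psi(s)\to +\infty$ at some time strictly less than $T_0$, contradicting $\Psi\in C^1([0,T_0])$; hence $T_{\max}\leq T_0$. It then suffices to set $T(\tilde{m},A):=T_0=1$ and $K(\tilde{m},A):=|\Omega|/e+\ell$ with the $\ell$ just chosen: the hypothesis $\mathcal{F}_{1,1}(u_0,v_0,w_0)<-K(\tilde{m},A)$ secures both $\ell\geq 1$ and $M=-\mathcal{F}_{1,1}(u_0,v_0,w_0)>|\Omega|/e+\ell$, so the whole chain is legitimate and $T_{\max}(u_0,v_0,w_0)\leq T(\tilde{m},A)$; together with the extensibility criterion in Proposition~\ref{prop: local existence and uniqueness}, the finiteness of $T_{\max}$ is exactly the claimed finite-time blowup.

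All the real analytic work sits in the integral inequality \eqref{eq: integral inequality}, which is established earlier and is taken here as given; within the present argument the only delicate points are the ordering of the constants — $C$, and hence $c$, must be frozen before $\ell$ (and thus $K$) is picked, and $\ell\geq 1$ must be compatible with $M\geq|\Omega|/e+\ell$ — together with the observation that, under the contradiction hypothesis, $\int_\Omega uv$ and $\int_\Omega u\ln u$ are finite and continuous on $[0,T_0]$, so that $\Psi$ is genuinely $C^1$ there.
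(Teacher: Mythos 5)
Your proof is correct and follows essentially the same route as the paper: both convert Proposition~\ref{prop: an integral inequality} into the superlinear differential inequality $\Psi(s)\leq C(1+s)\,\Psi'(s)^{\theta}$ for the same auxiliary function $\Psi$, using $-\mathcal F_{1,1}(s)\geq -\mathcal F_{1,1}(0)$ for the lower bound $\Psi'\geq\ell$ and $\xi\ln\xi\geq-1/e$ for the upper trap, and then conclude by ODE comparison. The only difference is bookkeeping of constants: the paper fixes $\ell$ first and reads off $T(\tilde m,A)$ as the blowup time of the explicit comparison solution of \eqref{sys: NODE}, whereas you freeze $T(\tilde m,A)=1$ and then enlarge $\ell$ (hence $K$) so that blowup is forced before time $1$; both orderings are legitimate.
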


Concerning the existence of initial data with low energy,
for any given nonnegative initial datum with suitable regularity,
we construct a family of initial data with arbitrarily large negative energy,
which can approximate the given initial datum in an appropriate topology. 

\begin{theorem}
  \label{thm: dense initial data for blowup}
  Assume $\Omega = B_R \subset\mathbb R^n$ for some $n\geq5$ and $R>0$.
  For any triplet $(u_0, v_0, w_0)$ of nonnegative and radially symmetric functions satisfying \eqref{h: initial data},
  one can find a family
  \begin{displaymath}
    \{(u_\eta, v_\eta, w_\eta) \in L^\infty(\Omega) \times W^{2,\infty}(\Omega) \times W^{2,\infty}(\Omega)\}_{\eta\in(0,1)}
  \end{displaymath}
  of function triplets with properties:
  $(u_\eta, v_\eta, w_\eta)$ complies with \eqref{h: initial data} for all $\eta\in(0,1/2)\cap(0,R)$ such that 
  \begin{equation}
    \label{eq: uetatou0}
    u_\eta \to u_0 \quad \text{in } L^p(\Omega) 
    \text{ for all } p\in\left[1,\frac{2n}{n+4}\right)
    \text{ as } \eta\searrow0,
  \end{equation}
  and 
  \begin{equation}
    \label{eq: vetatov0}
    v_\eta \to v_0
    \text{ and } 
    w_\eta \to w_0 
    \quad \text{in } W^{2, 2}(\Omega) 
    \text{ as } \eta\searrow0,
  \end{equation}
  but 
  \begin{equation}
    \label{eq: mathcalFuetaveta}
    \mathcal F_{1,1}(u_\eta, v_\eta, w_\eta)\to-\infty\quad \text{as } \eta\searrow 0.
  \end{equation}
  In particular, 
  \begin{displaymath}
    \inf_{(u,v,w)\in \mathcal B(\tilde{m}, A)}\mathcal{F}_{1,1}(u,v,w) = - \infty
  \end{displaymath}
  holds for all $\tilde{m} > 0$ and $A > 0$. 
  Here, $\mathcal F_{1,1}$ is taken from \eqref{sym: energy and dissipation rate}.
\end{theorem}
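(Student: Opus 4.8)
The plan is to obtain $(u_\eta,v_\eta,w_\eta)$ by a local surgery on the prescribed data inside a shrinking ball: I would graft a concentrating spike onto $u_0$ and a matched, $W^{2,2}$-small bump onto $v_0$, and leave $w_\eta:=w_0$ untouched. Concretely, fix a radial $0\le\chi\in C_c^\infty(B_1)$ with $\chi\equiv1$ on $B_{1/2}$, write $\chi_\eta:=\chi(\cdot/\eta)$, and set
\begin{equation*}
  u_\eta:=u_0+P_\eta\,\chi_\eta,\qquad v_\eta:=v_0+H_\eta\,\chi_\eta,\qquad w_\eta:=w_0,
\end{equation*}
with amplitudes $P_\eta:=\eta^{-(n+4)/2}(\ln\tfrac1\eta)^{\sigma}$ and $H_\eta:=\eta^{-(n-4)/2}(\ln\tfrac1\eta)^{-\rho}$ for exponents $\sigma>\rho>0$ to be fixed (say $\sigma=1$, $\rho=\tfrac12$). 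For $\eta\in(0,\tfrac12)\cap(0,R)$ the grafted pieces are supported in $\overline{B_\eta}\subset\Omega$, so nonnegativity, radial symmetry, the regularity $u_\eta\in C^0(\overline\Omega)$ and $v_\eta,w_\eta\in C^2(\overline\Omega)$, and the homogeneous Neumann conditions are all inherited from $(u_0,v_0,w_0)$; hence $(u_\eta,v_\eta,w_\eta)$ complies with \eqref{h: initial data}.

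Next I would run the scaling bookkeeping, which hinges on the Sobolev embedding $W^{2,2}(\Omega)\hookrightarrow L^{2n/(n-4)}(\Omega)$ (valid since $n>4$): the chosen amplitudes sit exactly at the critical exponent this embedding dictates, and the logarithms break the tie in the desired direction. One checks that $\|u_\eta-u_0\|_{L^p}=P_\eta\eta^{n/p}\|\chi\|_{L^p}$ is of order $\eta^{\,n/p-(n+4)/2}(\ln\tfrac1\eta)^{\sigma}$, which tends to $0$ precisely for $p<\tfrac{2n}{n+4}$ (the polynomial factor beats any power of the logarithm), giving \eqref{eq: uetatou0}; that every second-order Sobolev seminorm of $H_\eta\chi_\eta$ is of order $(\ln\tfrac1\eta)^{-\rho}$ while the lower-order norms are smaller still, so $v_\eta\to v_0$ in $W^{2,2}$ and, trivially, $w_\eta\to w_0$, giving \eqref{eq: vetatov0}; and that the entropy stays bounded, in fact $\int_\Omega u_\eta\ln u_\eta\to\int_\Omega u_0\ln u_0$, the spike contributing an amount of order $P_\eta(\ln P_\eta)\eta^n\approx\eta^{(n-4)/2}(\ln\tfrac1\eta)^{\sigma+1}\to0$ ($u_0$ being bounded). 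Finally, expanding
\begin{equation*}
  \int_\Omega u_\eta v_\eta=\int_\Omega u_0v_0+H_\eta\!\int_\Omega u_0\chi_\eta+P_\eta\!\int_\Omega v_0\chi_\eta+P_\eta H_\eta\!\int_\Omega\chi_\eta^2,
\end{equation*}
the first term is $O(1)$ and the next two are $o(1)$, while the last equals $P_\eta H_\eta\eta^n\|\chi\|_{L^2}^2=\|\chi\|_{L^2}^2(\ln\tfrac1\eta)^{\sigma-\rho}\to+\infty$ because $\sigma>\rho$.

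To conclude I would assemble $\mathcal F_{1,1}$: since $v_\eta\to v_0$ in $W^{2,2}$ and $w_\eta\to w_0$, the purely ``$v,w$'' summands $\tfrac12\int_\Omega|\Delta v_\eta-v_\eta+w_\eta|^2$, $\tfrac12\int_\Omega|\Delta v_\eta|^2$, $\int_\Omega|\nabla v_\eta|^2$, $\tfrac12\int_\Omega v_\eta^2$ are continuous functionals of $(v_\eta,w_\eta)\in W^{2,2}\times W^{2,2}$ and so remain bounded; together with the boundedness of the entropy and $\int_\Omega u_\eta v_\eta\to+\infty$ this yields \eqref{eq: mathcalFuetaveta}. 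For the ``in particular'' part, given $\tilde m,A>0$ one fixes any admissible triplet $(u_0,v_0,w_0)$ — e.g.\ small constants — with $\|u_0\|_{L^1}<\tilde m$ and $\|w_0\|_{W^{1,2}}+\|v_0\|_{W^{2,2}}<A$, and the convergences above place $(u_\eta,v_\eta,w_\eta)\in\mathcal B(\tilde m,A)$ for $\eta$ small while $\mathcal F_{1,1}(u_\eta,v_\eta,w_\eta)\to-\infty$. The hard part is the profile choice in the second step: one must land on the exponents $(n+4)/2$ and $(n-4)/2$ — the unique pair keeping $u_\eta$ convergent in $L^p$ for every $p<\tfrac{2n}{n+4}$ and $v_\eta$ convergent in $W^{2,2}$ — and then verify that logarithmic weights with $\sigma>\rho>0$ leave both the entropy and the quadratic $v,w$-terms bounded yet still force the cross term to diverge; this window $\sigma>\rho>0$ is nonempty only because $n>4$, which is precisely where the hypothesis $n\ge5$ enters.
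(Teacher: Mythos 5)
Your construction is correct and is essentially the paper's own proof: the same concentrating bumps at the critical amplitudes $\eta^{-(n+4)/2}$ for $u$ and $\eta^{-(n-4)/2}$ for $v$, with logarithmic weights chosen so that the cross term $\int_\Omega u_\eta v_\eta$ diverges like a power of $\ln\frac1\eta$ while the entropy and all quadratic $(v,w)$-terms stay bounded (the paper uses weights $(\ln\frac1\eta)^{2\gamma}$ and $(\ln\frac1\eta)^{-\gamma}$, i.e.\ your $(\sigma,\rho)=(2\gamma,\gamma)$, and also adds the same bump to $w_0$, which is immaterial). Your bookkeeping of the exponents and of the membership in $\mathcal B(\tilde m,A)$ for small $\eta$ is accurate, so nothing further is needed.
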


\begin{remark}
  \label{r: precedents} 
  The inequality~\eqref{eq: integral inequality} may shed light on construction of certain integral inequality for the indirect chemotaxis of parabolic-parabolic-ODE type $\eqref{sys: ks isp pp/ep/e}_{1,0}$, where $w$ and consequently $v$ have much lower regularity. Notably, the functional inequality of the form \eqref{eq: functional inequality} works for both classical Keller-Segel model~$\eqref{sys: ks isp pp/ep/e}_{0,0}$~\cite{Winkler2013} and the Nagai-type variant~$\eqref{sys: ks isp pp/ep/e}_{0,1}$~\cite{Mao2024b}, both of which have no temporal relaxation mechanism $\tau=0$.  

  The primary challenge lies in construction of the integral inequality \eqref{eq: integral inequality} due to temporal-relaxation effect of indirect attractant mechanisms. 
  Mathematically, we shall obtain spatial-temporal estimates $\int_0^s\|\Delta v\|_{L^2(B_\rho)}^2\dd t$ for all $(\rho,s)\in(0,R)\times(0,T_{\max})$ in terms of both energy $\mathcal{F}_{1,1}$ and its dissipation rate $\mathcal{D}_{1,1}$, except the term $\int_\Omega u\ln u$ from $\mathcal{F}_{1,1}$.  
  We are stimulated by \cite{Winkler2013,Cabre2020} to introduce the Poho\v{z}aev-type test function $\mathds{1}_{B_\rho}|x|^{n-2}x\cdot\nabla v$ to obtain the second-order estimate. Here, $\mathds{1}_E$ stands for the indicator function of a Borel set $E\subseteq\mathbb R^n$.
  Formally speaking, we implement integration by parts three times to transfer the operator $\partial_t - r^{1-n}(\partial_{r}r^{n-1})_r$ from acting on $w$ to acting on $v$.
\end{remark}

This paper is organized as follows. In Section~\ref{section preliminary},
we present some preliminaries.
In Section~\ref{sec: linking energy to dissipation}, we establish the functional inequality \eqref{eq: integral inequality}.
Section~\ref{sec: finite-time blowup} and Section~\ref{sec: initial data} are devoted to the proof of Theorem~\ref{thm: low energy enforced finite-time blowup} and Theorem~\ref{thm: dense initial data for blowup}, respectively.

\section{Preliminaries. Energy functional and basic estimates}
\label{section preliminary}

We first recall local-in-time existence and uniqueness of classical solutions to the system \eqref{sys: ks isp pp/ep/e},
which has been established in~\cite[Section~4]{Fujie2017}.

\begin{proposition}
  \label{prop: local existence and uniqueness}
 Let $\Omega = B_R\subset\mathbb{R}^n$ for some $R > 0$ and $n\in\mathbb N$. 
 Assume that $(u_0, v_0, w_0)$ is as in \eqref{h: initial data}.
 Then there exist $T_{\max } = T_{\max}(u_0, v_0, w_0) \in(0, \infty]$ and a unique triplet $(u, v, w)$ of nonnegative and radially symmetric functions from 
 $C^0\left(\bar{\Omega} \times\left[0, T_{\max }\right)\right) \cap C^{2,1}\left(\bar{\Omega} \times\left(0, T_{\max }\right)\right)$ 
 that solves \eqref{sys: ks isp pp/ep/e} classically in $\Omega \times\left(0, T_{\max }\right)$.  
 Also, the solution $(u,v,w)$ satisfies $u>0$, $v>0$ and $w>0$ in $\Omega \times\left(0, T_{\max }\right)$ and 
 \begin{equation} 
 \text{if } T_{\max } < \infty,
  \text{ then }
  \|u(\cdot, t)\|_{L^{\infty}(\Omega)}\to\infty 
  \text{ as } t \nearrow T_{\max }.
 \end{equation}
\end{proposition}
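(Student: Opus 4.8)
The plan is to construct the solution by a contraction–mapping argument based on the Neumann heat semigroup, then upgrade the resulting mild solution to a classical one by parabolic regularity, and finally read off positivity, radial symmetry and the blow-up alternative from the maximum principle and a standard continuation argument. Since the only genuine nonlinearity sits in the $u$–equation, I would set up the iteration on $u$ alone. Fix $T\in(0,1)$ to be chosen small, let $(e^{t\Delta})_{t\ge0}$ be the Neumann heat semigroup on $\Omega=B_R$, and work in the closed subset $S_T:=\{\hat u\in C^0(\overline\Omega\times[0,T]):\hat u\ge0,\ \hat u\text{ radial},\ \|\hat u\|_{L^\infty}\le\|u_0\|_{L^\infty}+1\}$ of $C^0(\overline\Omega\times[0,T])$. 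Given $\hat u\in S_T$, I first solve the two linear equations in order — $w$ from $w_t=\Delta w-w+\hat u$, $w(0)=w_0$, and then $v$ from $v_t=\Delta v-v+w$, $v(0)=v_0$ — by the variation-of-constants formula, using the smoothing estimates $\|\nabla e^{t\Delta}z\|_{L^\infty}\le C(1+t^{-1/2})\|z\|_{L^\infty}$ and $\|e^{t\Delta}\nabla\!\cdot z\|_{L^\infty}\le C(1+t^{-1/2})\|z\|_{L^\infty}$; together with $v_0,w_0\in C^2(\overline\Omega)$ this yields a bound $\|\nabla v(\cdot,t)\|_{L^\infty}\le M(T,\|u_0\|_{L^\infty},\|v_0\|_{C^2},\|w_0\|_{C^2})$ uniform on $[0,T]$, and, by interior-and-boundary parabolic regularity for the $w$– then $v$–equation, Hölder continuity of $\nabla v$ on $\overline\Omega\times(0,T]$. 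Then I define $\Phi(\hat u):=u$, the solution of the \emph{linear} drift–diffusion problem $u_t=\Delta u-\nabla\!\cdot(u\nabla v)$, $\partial_\nu u=0$, $u(0)=u_0$, obtained from $u(t)=e^{t\Delta}u_0-\int_0^t e^{(t-s)\Delta}\nabla\!\cdot\bigl(u(s)\nabla v(s)\bigr)\dd s$ (the inner fixed point for $u$ given $\nabla v$ converges by the same estimate). Using $\int_0^t(t-s)^{-1/2}\dd s=2\sqrt t$, one verifies that $\Phi$ maps $S_T$ into itself and is a contraction once $T=T(\|u_0\|_{L^\infty},\|v_0\|_{C^2},\|w_0\|_{C^2})$ is small; nonnegativity of $w$, $v$ and $u=\Phi(\hat u)$ follows from the comparison principle applied to each linear equation, and radial symmetry is preserved because the data and the semigroup are, so $S_T$ is invariant. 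The Banach fixed point theorem then gives a unique mild solution $(u,v,w)$ on $[0,T]$.

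Next I would bootstrap regularity: since $\nabla v$ is Hölder on $\overline\Omega\times(0,T]$ and $u\in C^0$, the $u$–equation is linear, uniformly parabolic with Hölder coefficients on compact subsets of $(0,T]$, so parabolic Schauder theory gives $u\in C^{2+\alpha,1+\alpha/2}_{\mathrm{loc}}(\overline\Omega\times(0,T])$, and then $v,w\in C^{2+\alpha,1+\alpha/2}_{\mathrm{loc}}$ as well, while continuity up to $t=0$ is retained from the mild formulation; thus $(u,v,w)$ lies in the stated class and solves \eqref{sys: ks isp pp/ep/e} classically. A routine open/closed continuation argument extends the solution to a maximal interval $[0,T_{\max})$, $T_{\max}\in(0,\infty]$, uniqueness being inherited from the fixed point on each subinterval. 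Strict positivity on $\Omega\times(0,T_{\max})$ comes from the strong maximum principle and the Hopf lemma (the latter for $u$ under the Neumann condition) applied to the three scalar equations, nontriviality of the data preventing any identical vanishing. For the blow-up alternative: if $T_{\max}<\infty$ and $\|u(\cdot,t)\|_{L^\infty(\Omega)}$ stayed bounded on $[0,T_{\max})$, the estimates above would be uniform up to $T_{\max}$ — $w$ bounded in $W^{1,\infty}$, $\nabla v$ bounded and uniformly Hölder, hence $u$ bounded in $C^{2+\alpha}$ near $T_{\max}$ — so $(u,v,w)(\cdot,t)$ would converge in $C^2(\overline\Omega)^3$ as $t\nearrow T_{\max}$ and the local theory could be restarted at $T_{\max}$, contradicting maximality; therefore $\|u(\cdot,t)\|_{L^\infty(\Omega)}\to\infty$ as $t\nearrow T_{\max}$.

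The step I expect to be the main obstacle is the interplay of regularities in the bootstrap: one must push $u\in L^\infty$ through the $w$– and $v$–equations to obtain \emph{Hölder} control of the drift $\nabla v$ (mere boundedness does not suffice for Schauder estimates), which requires care with the singular $t^{-1/2}$ factors near $t=0$ and with the Neumann boundary, and one must ensure that the continuation argument needs only an $L^\infty$ bound on $u$ — not on $w$ or $v$ — so that the blow-up criterion is sharp. Alternatively, all of this can be packaged by invoking Amann's theory of triangular quasilinear parabolic systems, which directly yields local well-posedness and a continuation criterion once the problem is recast in an admissible functional-analytic setting; this is essentially the route taken in \cite{Fujie2017}.
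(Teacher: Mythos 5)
Your argument is correct and is essentially the standard approach: the paper does not prove this proposition itself but refers to \cite[Section~4]{Fujie2017}, which runs exactly the fixed-point/semigroup scheme you describe (solve the linear $w$- and $v$-equations for a frozen $\hat u$, contract on $u$ via the Neumann heat semigroup, bootstrap by parabolic regularity, and deduce the extensibility criterion from an $L^\infty$ bound on $u$ alone). The one detail worth making explicit in your bootstrap is that Schauder theory for the non-divergence form $u_t=\Delta u-\nabla v\cdot\nabla u-u\,\Delta v$ requires H\"older continuity of $\Delta v$ as well as of $\nabla v$; this is obtained by first deriving interior H\"older continuity of $u$ from De~Giorgi--Nash--Moser estimates for divergence-form equations (which need only $\nabla v\in L^\infty$) and then applying Schauder successively to the $w$- and $v$-equations before returning to the $u$-equation.
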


The Lyapunov functional of the system \eqref{sys: ks isp pp/ep/e} was constructed in~\cite[Proposition~6.1]{Fujie2017}. In the sequel, we may abuse the notation $\mathcal{F}(t) = \mathcal{F}(u,v) = \mathcal{F}(u,v,w)$.

\begin{proposition}
  Let $(u,v,w)$ be a classical solution given as in Proposition~\ref{prop: local existence and uniqueness}. 
  Then the following identity holds:
\begin{equation} 
  \label{eq: energyequation}
\frac{d}{d t} \mathcal{F}(u(t), v(t))+\mathcal{D}(u(t), v(t))=0 \quad \text {for all } t \in(0, T_{\max}),
\end{equation}
where
\begin{equation}
  \label{sym: mathcalFD}
\begin{aligned}
 \mathcal{F}(u, v)&=\int_{\Omega}(u \log u- u v)
+ \frac{1}{2}\int_\Omega |v_t|^2
+\frac{1}{2} \int_{\Omega}|(-\Delta+1) v|^2\\
&= \int_{\Omega}(u \log u- u v)
+ \frac{1}{2}\int_\Omega |\Delta v - v + w|^2
+\frac{1}{2} \int_{\Omega}|(-\Delta+1) v|^2
\quad\text{and} \\
\mathcal{D}(u, v) &=
2\int_{\Omega}\left(\left|\nabla v_t\right|^2+\left|v_t\right|^2\right)
+\int_{\Omega} u|\nabla(\log u - v)|^2 \\
&= 2\int_{\Omega}\left(\left|\nabla(\Delta v - v + w)\right|^2+\left|\Delta v - v + w\right|^2\right)
+\int_{\Omega} u|\nabla(\log u - v)|^2 .
\end{aligned}
\end{equation}
\end{proposition}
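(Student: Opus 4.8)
The plan is to differentiate $\mathcal F$ term by term along the solution curve, to integrate by parts repeatedly, and to verify that every boundary contribution vanishes because of the homogeneous Neumann conditions $\partial_\nu u = \partial_\nu v = \partial_\nu w = 0$ (and hence also $\partial_\nu v_t = \partial_\nu w_t = 0$). For the first two terms I would rewrite the $u$-equation as $u_t = \nabla\cdot\bigl(u\nabla(\log u - v)\bigr)$; positivity of $u$ up to the boundary for $t>0$ (parabolic Hopf lemma, together with continuity of $u$ on $\overline\Omega$) makes $u\log u$ and its time derivative well defined, and combining mass conservation $\int_\Omega u_t = 0$ with the two integrations by parts $\int_\Omega u_t\log u = -\int_\Omega u\,\nabla(\log u - v)\cdot\nabla\log u$ and $-\int_\Omega u_t v = \int_\Omega u\,\nabla(\log u - v)\cdot\nabla v$ yields
\[
\frac{d}{dt}\int_\Omega (u\log u - uv) = -\int_\Omega u\,\bigl|\nabla(\log u - v)\bigr|^2 - \int_\Omega u\,v_t .
\]

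The heart of the computation is the two remaining quadratic terms. I would set $z := v_t$ and differentiate the second equation in time to get $z_t = \Delta z - z + w_t$, so that $\frac{d}{dt}\tfrac12\int_\Omega z^2 = -\int_\Omega |\nabla z|^2 - \int_\Omega z^2 + \int_\Omega z\,w_t$; for the last term I would use the pointwise identity $(-\Delta + 1)v = w - z$, which is merely a rearrangement of the second equation, so that $\frac{d}{dt}\tfrac12\int_\Omega |(-\Delta+1)v|^2 = \int_\Omega \nabla(w - z)\cdot\nabla z + \int_\Omega (w - z)z$ after one integration by parts. Adding these two identities, the cancellation to keep track of is that, by the third equation,
\[
\int_\Omega z\,w_t + \int_\Omega \nabla w\cdot\nabla z = \int_\Omega z\,(w_t - \Delta w) = \int_\Omega z\,(u - w),
\]
so that the $\int_\Omega zw$ contributions cancel and one is left with $\frac{d}{dt}\bigl(\tfrac12\int_\Omega z^2 + \tfrac12\int_\Omega |(-\Delta+1)v|^2\bigr) = -2\int_\Omega |\nabla z|^2 - 2\int_\Omega z^2 + \int_\Omega u\,v_t$. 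Summing this with the first display, the two copies of $\int_\Omega u\,v_t$ cancel and \eqref{eq: energyequation} follows; the second expressions for $\mathcal F$ and $\mathcal D$ in \eqref{sym: mathcalFD} are then obtained simply by substituting $v_t = \Delta v - v + w$.

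The step I expect to demand the most care is not the bookkeeping above but the regularity needed to justify it: differentiating the second equation in time presupposes $v_t \in C^{2,1}(\overline\Omega \times (0, T_{\max}))$ with $\partial_\nu v_t = 0$, and the identity $\int_\Omega z\,(w_t - \Delta w) = \int_\Omega z\,(u - w)$ needs $w$ to be smooth enough in space with $\partial_\nu w = 0$ and $w_t \in L^2$. On the open interval $(0, T_{\max})$ this is supplied by parabolic Schauder theory: from $u, w \in C^{2,1}(\overline\Omega \times (0, T_{\max}))$ one bootstraps additional space-time Hölder regularity for $v$ (and similarly for $w$), which legitimizes the term-by-term differentiation; alternatively the identity can first be established for smooth approximating solutions and then passed to the limit. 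Since this energy law coincides with \cite[Proposition~6.1]{Fujie2017}, one may also simply invoke that reference — for the present purposes it suffices that \eqref{eq: energyequation} holds in the classical sense on $(0, T_{\max})$.
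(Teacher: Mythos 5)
Your computation is correct: the three integrations by parts for the $u$-part, the time-differentiated second equation for $\tfrac12\int_\Omega v_t^2$, and the substitution $(-\Delta+1)v=w-v_t$ followed by the use of the third equation to produce $\int_\Omega z(w_t-\Delta w)=\int_\Omega z(u-w)$ all check out, the $\int_\Omega zw$ terms and the two copies of $\int_\Omega uv_t$ cancel exactly as you say, and all boundary terms vanish under the Neumann conditions (for the $u$-equation the relevant flux is $u\,\partial_\nu(\log u - v)=\partial_\nu u - u\,\partial_\nu v=0$). The paper itself offers no proof of this proposition --- it simply cites Fujie and Senba \cite[Proposition~6.1]{Fujie2017} --- so your derivation is not a different route but rather the standard argument that the cited reference carries out; your closing remarks on the regularity needed to justify differentiating under the integral and differentiating the $v$-equation in time (parabolic bootstrap, or approximation) are exactly the points that such a proof must address.
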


For the sake of simplicity in notation, 
we shall collect some time-independent estimates specifically for high dimensions, i.e., $n\geq4$. These estimates will serve as a starting point for obtaining certain pointwise estimates. 

\begin{lemma}
  \label{le: basic estimates}
  Let $n\geq4$ and 
  $(u,v,w)$ be a classical solution given as in Proposition~\ref{prop: local existence and uniqueness}.
  Then 
  \begin{equation}
    \label{eq: mass identity}
    \int_{\Omega} u(x, t) =\int_{\Omega} u_{0} =: m
      \quad\text{for all } t \in (0, T_{\max})
  \end{equation}
  \begin{equation}
    \label{eq: w-L1estimate}
    \int_{\Omega} w(x, t) \leq \max\left\{\int_\Omega u_0, \int_\Omega w_0\right\}
      \quad\text{for all } t \in (0, T_{\max})
  \end{equation}
  and 
  \begin{equation}
    \label{eq: vmassinequality}
    \int_\Omega v(x,t) 
    \leq \max\left\{\int_\Omega u_0, \int_\Omega v_0, \int_\Omega w_0\right\}
    \quad \text{for all } t\in(0,T_{\max}).
  \end{equation}
  For each 
  \begin{equation*}
    q\in\left[1,\frac{n}{n-1}\right),
  \end{equation*} 
  there exists $C_q>0$ such that 
  \begin{equation}
    \label{eq: wW1q}
    \|w\|_{W^{1,q}(\Omega)} \leq C_q(m + \|w_0\|_{W^{1,2}(\Omega)})
    \quad\text{for all } t \in (0, T_{\max}).
  \end{equation}
  For each 
  \begin{equation*}
    p\in\left[1,\frac{n}{n-2}\right),
  \end{equation*} 
  there exists $C_p > 0$ such that 
  \begin{equation}
    \label{eq: vW2p}
    \|v_t\|_{L^p(\Omega)}
    + \|w\|_{L^p(\Omega)}
    + \|v\|_{W^{2,p}(\Omega)} 
    \leq C_p(m + \|w_0\|_{W^{1,2}(\Omega)} + \|v_0\|_{W^{2,2}(\Omega)})
  \end{equation}
  for all $t \in (0, T_{\max})$.
\end{lemma}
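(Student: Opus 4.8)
The plan is to establish the five groups of estimates in the order in which they are stated, each feeding into the next; the starting point is the conserved mass, and the regularity of $w$ is then fed into the $v$-equation. I would first obtain the $L^1$-bounds \eqref{eq: mass identity}--\eqref{eq: vmassinequality} by integrating the three equations of \eqref{sys: ks isp pp/ep/e} over $\Omega$ and using the homogeneous Neumann conditions. Integrating the $u$-equation gives $\frac{\mathrm d}{\mathrm dt}\int_\Omega u=\int_{\partial\Omega}(\partial_\nu u-u\,\partial_\nu v)=0$, hence \eqref{eq: mass identity}. Integrating the $w$-equation gives $\frac{\mathrm d}{\mathrm dt}\int_\Omega w=m-\int_\Omega w$, so $\int_\Omega w(\cdot,t)=e^{-t}\int_\Omega w_0+(1-e^{-t})m$ is a convex combination of $\int_\Omega w_0$ and $m$, which is \eqref{eq: w-L1estimate}. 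Integrating the $v$-equation and using \eqref{eq: w-L1estimate} gives $\frac{\mathrm d}{\mathrm dt}\int_\Omega v\le\max\{m,\int_\Omega w_0\}-\int_\Omega v$, and the integrating-factor comparison for this linear differential inequality yields \eqref{eq: vmassinequality}.

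Next I would prove \eqref{eq: wW1q} from the Duhamel formula $w(\cdot,t)=e^{t(\Delta-1)}w_0+\int_0^t e^{(t-s)(\Delta-1)}u(\cdot,s)\,\mathrm ds$ for the Neumann heat semigroup. The $w_0$-part is bounded, uniformly in $t$, by $\|w_0\|_{W^{1,2}}$ because $e^{t(\Delta-1)}$ is a contraction on $W^{1,2}(\Omega)$ and $W^{1,2}(\Omega)\hookrightarrow W^{1,q}(\Omega)$ on the bounded domain $\Omega$ (since $q<\tfrac n{n-1}\le2$ for $n\ge4$). The $u$-part is handled with the standard $L^p$--$L^q$ smoothing estimates (cf.\ \cite{Winkler2010b}) together with the exponential factor from $e^{t(\Delta-1)}=e^{-t}e^{t\Delta}$: the kernel bound $\|\nabla e^{\sigma(\Delta-1)}f\|_{L^q}\le C\big(1+\sigma^{-\frac12-\frac n2(1-\frac1q)}\big)e^{-\sigma}\|f\|_{L^1}$ has singular exponent $>-1$ exactly when $q<\tfrac n{n-1}$, so using $\|u(\cdot,s)\|_{L^1}=m$ and the convergence of $\int_0^\infty\big(1+\sigma^{-\frac12-\frac n2(1-\frac1q)}\big)e^{-\sigma}\,\mathrm d\sigma$ gives a $t$-independent bound. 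A Sobolev embedding then bootstraps \eqref{eq: wW1q}: since $W^{1,q}(\Omega)\hookrightarrow L^{nq/(n-q)}(\Omega)$ and $\tfrac{nq}{n-q}\nearrow\tfrac n{n-2}$ as $q\nearrow\tfrac n{n-1}$, for every $r\in[1,\tfrac n{n-2})$ one gets $\|w(\cdot,t)\|_{L^r}\le C_r(m+\|w_0\|_{W^{1,2}})$ uniformly in $t$, which already accounts for the $\|w\|_{L^p}$ term of \eqref{eq: vW2p}.

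For the rest of \eqref{eq: vW2p} I would write $v(\cdot,t)=e^{t(\Delta-1)}v_0+\int_0^t e^{(t-s)(\Delta-1)}w(\cdot,s)\,\mathrm ds$ and apply $-\Delta+1$, which commutes with the semigroup. The $v_0$-contribution is $e^{t(\Delta-1)}(-\Delta+1)v_0$, bounded in $L^2(\Omega)$, and hence in $L^p(\Omega)$ since $p<\tfrac n{n-2}\le2$ for $n\ge4$, by $\|(-\Delta+1)v_0\|_{L^2}\le C\|v_0\|_{W^{2,2}}$ (here $\partial_\nu v_0=0$ is what puts $v_0$ in the Neumann domain of $-\Delta+1$). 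For the $w$-contribution, choose $r$ with $p<r<\tfrac n{n-2}$ and use $\|(-\Delta+1)e^{\sigma(\Delta-1)}g\|_{L^p}\le C\big(1+\sigma^{-1-\frac n2(\frac1r-\frac1p)}\big)e^{-\sigma}\|g\|_{L^r}$; because $p<r$ the singular exponent is $>-1$, the $\sigma$-integral over $(0,\infty)$ converges, and this term is bounded by $C\sup_{t}\|w(\cdot,t)\|_{L^r}\le C(m+\|w_0\|_{W^{1,2}})$. Hence $\|(-\Delta+1)v(\cdot,t)\|_{L^p}$ obeys the claimed bound; Neumann elliptic regularity turns this into $\|v\|_{W^{2,p}}\le C_p\|(-\Delta+1)v\|_{L^p}$, and $v_t=-(-\Delta+1)v+w$ then supplies the $\|v_t\|_{L^p}$ bound, completing \eqref{eq: vW2p}.

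The main obstacle will be the time-uniformity of the $W^{1,q}$- and $W^{2,p}$-bounds rather than the bounds themselves. It rests on combining two features: bootstrapping $w$ into $L^r$ with $r$ \emph{strictly} above $p$ (not merely $r=p$), so that the smoothing exponent in the Duhamel integral stays above $-1$; and exploiting the exponential weight $e^{-\sigma}$ coming from the zeroth-order absorption terms (the shift to $e^{t(\Delta-1)}$), which is what makes the $\sigma$-integral converge over all of $(0,\infty)$, so that no factor of $T_{\max}$ ever appears. The hypothesis $n\ge4$ is used precisely to keep the exponents $q<\tfrac n{n-1}$ and $p<\tfrac n{n-2}$ below $2$, which is what lets the $W^{1,2}$- and $W^{2,2}$-norms of $w_0$ and $v_0$ absorb the lower-integrability data that the semigroup estimates produce.
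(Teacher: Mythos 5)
Your treatment of the $L^1$ identities and of the bound \eqref{eq: wW1q} (Duhamel for $w_t-\Delta w+w=u$, gradient smoothing with exponent $-\tfrac12-\tfrac n2(1-\tfrac1q)>-1$, then Sobolev embedding to get $\|w\|_{L^p}$) is correct and is exactly the paper's route. The second-order estimate for $v$, however, contains a genuine gap. The inequality you invoke,
\begin{equation*}
  \|(-\Delta+1)e^{\sigma(\Delta-1)}g\|_{L^p(\Omega)}
  \leq C\Bigl(1+\sigma^{-1-\frac n2\left(\frac1r-\frac1p\right)}\Bigr)e^{-\sigma}\|g\|_{L^r(\Omega)},
  \qquad p<r,
\end{equation*}
is not a valid smoothing estimate. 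The $L^q$--$L^p$ heat semigroup estimates carry the exponent $-\tfrac n2(\tfrac1q-\tfrac1p)$ only when one \emph{gains} integrability ($q\le p$); when the target space is weaker ($r>p$), the operator norm of $\Delta e^{\sigma\Delta}\colon L^r\to L^p$ on a bounded domain still degenerates like $\sigma^{-1}$ (test on a localized high-frequency oscillation $g$, for which $\Delta e^{\sigma\Delta}g\approx -N^2e^{-N^2\sigma}g$ and all $L^s$-norms of $g$ are comparable), and one cannot convert a loss of integrability into a positive power of $\sigma$. With the correct exponent $-1$ your $\sigma$-integral diverges at $\sigma=0$, so the Duhamel bound for $\|(-\Delta+1)v\|_{L^p}$ does not close.

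The repair — and the paper's actual argument — is to spend only \emph{one} derivative on the semigroup by using the $W^{1,q}$ information on $w$ that you have already established. Differentiate the second equation in space, $v_{x_it}=\Delta v_{x_i}-v_{x_i}+w_{x_i}$, note that radial symmetry gives $v_{x_i}=v_r x_i/r\equiv0$ on $\partial\Omega$ (so the Dirichlet semigroup applies), and estimate $\|\nabla v_{x_i}\|_{L^p}$ via the gradient smoothing bound with source $w_{x_i}\in L^\infty((0,T_{\max});L^q(\Omega))$; the relevant exponent is $-\tfrac12-\tfrac n2(\tfrac1q-\tfrac1p)$, which exceeds $-1$ because $q$ can be chosen close to $\tfrac n{n-1}$ and $\tfrac1q-\tfrac1p<\tfrac1n$ for $p<\tfrac n{n-2}$. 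This yields $D^2v\in L^p$ uniformly in time, and then $v_t=\Delta v-v+w$ gives the $\|v_t\|_{L^p}$ bound as you intended.
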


\begin{proof}
The mass identity \eqref{eq: mass identity} follows directly 
from integrating the first equations in \eqref{sys: ks isp pp/ep/e} over $\Omega$. 
To see \eqref{eq: w-L1estimate}, we integrate the last equation and get 
\begin{equation*}
  \int_\Omega w(x,t) 
  = e^{-t}\int_\Omega w_0  
  + (1-e^{-t})\int_\Omega u_0 
  \leq \max\left\{\int_\Omega w_0, \int_\Omega u_0\right\}
  \quad \text{for all } t\in(0,T_{\max}).
\end{equation*}
Integrating the second equation yields  
\begin{align*}
  \int_\Omega v(x,t) 
  &= e^{-t}\int_\Omega v_0  
  + \int_0^te^{s-t}\int_\Omega w(x,s)\dd x\dd s \\
  &\leq e^{-t}\int_\Omega v_0  
  + (1-e^{-t}) \max\left\{\int_\Omega w_0, \int_\Omega u_0\right\}\\
  &\leq \max\left\{\int_\Omega u_0, \int_\Omega v_0, \int_\Omega w_0\right\}
  \quad \text{for all } t\in(0,T_{\max}).
\end{align*}
This verifies \eqref{eq: vmassinequality}.
By applying well-known Neumann heat semigroup estimates \cite[Lemma~1.3]{Winkler2010} to 
the parabolic initial-boundary value problem $w_t -\Delta w + w = u$, 
we can verify \eqref{eq: wW1q} as done in \cite[Lemma~3.1]{Winkler2013}.
Thus, $w\in L^p(\Omega)$ for $p\in[1,n/(n-2))$ by the Sobolev embedding theorem. 
Moreover, for any $p\in[1,n/(n-2))$ one can select 
$q\in[1,n/(n-1))$ such that 
\begin{equation}
  \label{eq: pqchoices}
  -\frac{1}{2}-\frac{n}{2}\left(\frac{1}{q}-\frac{1}{p}\right) > -1.
\end{equation}
$L^p$--$L^q$ estimates of the heat semigroup $e^{t\Delta}$ 
with homogeneous Dirichlet boundary conditions (cf. \cite[Lemma~3.1 and 3.2]{Ma2024}) 
applied to $v_{x_it} = \Delta v_{x_i} - v_{x_i} + w_{x_i}$ 
subject to $v_{x_i} = v_rx_i/r \equiv 0$ on $\partial\Omega$ for $i \in\{1,2,\ldots,n\}$,
provide a constant $C>0$ such that 
  \begin{align*}
    \|\nabla v_{x_i}\|_{L^p(\Omega)} 
    &\leq C\|v_{0x_i}\|_{W^{1,p}(\Omega)} \\
    &\quad + C\int_0^t\left(1+(t-s)^{-\frac{1}{2}-\frac{n}{2}\left(\frac{1}{q}-\frac{1}{p}\right)}\right)e^{-(t-s)}\dd s 
    \sup_{s\in(0,t)}\|w_{x_i}(\cdot,s)\|_{L^q(\Omega)} \\
    &\leq C\|v_0\|_{W^{2,2}(\Omega)} 
    + CC_q (m + \|w_0\|_{W^{1,2}(\Omega)}) \left(1+\Gamma\left(\frac{1}{2}-\frac{n}{2}\left(\frac{1}{q}-\frac{1}{p}\right)\right)\right),
  \end{align*}
holds for all $t\in(0,T_{\max})$, $p\in[1,n/(n-2))$ and $i \in\{1,2,\ldots,n\}$,
where the Euler's Gamma function $\Gamma$ is well-defined due to the choices of parameters in~\eqref{eq: pqchoices}. According to \eqref{eq: vmassinequality},
this implies that $v\in W^{2,p}(\Omega)$ for all $t\in(0,T_{\max})$, 
as well as \eqref{eq: vW2p},
since $v_t = \Delta v - v + w \in L^p(\Omega)$ for all $t\in(0,T_{\max})$.
\end{proof}

Thanks to symmetry assumptions, uniform-in-time pointwise estimates for $v$ and $w$ can be deduced from Lemma~\ref{le: basic estimates}.

\begin{lemma}
\label{le: pointwise estimate}
  Let $\Omega = B_R\subset\mathbb R^n$ with some $R > 0$ and $n\geq4$.
  If $(u,v,w)$ is a classical solution of \eqref{sys: ks isp pp/ep/e} 
  associated with radially symmetric initial datum $(u_0,v_0,w_0)$ as
  given in Proposition~\ref{prop: local existence and uniqueness}, 
  then for each $\beta > n - 2$, there exists a constant $C_\beta > 0$ such that 
  \begin{equation}
    \label{eq: wpointwise}
    \||x|^{\beta}w\|_{L^\infty(\Omega)} 
    \leq C_\beta(m + \|w_0\|_{W^{1,2}(\Omega)})
  \end{equation}
  for all $t\in(0,T_{\max})$,
  and that 
  \begin{equation}
    \label{eq: vpointwise}
    \||x|^{\beta-1} v\|_{W^{1,\infty}(\Omega)}
    \leq C_\beta(m + \|w_0\|_{W^{1,2}(\Omega)} + \|v_0\|_{W^{2,2}(\Omega)})
  \end{equation}
  for all $t\in(0,T_{\max})$.
\end{lemma}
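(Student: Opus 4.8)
The plan is to derive both \eqref{eq: wpointwise} and \eqref{eq: vpointwise} from a single one‑dimensional device: a weighted $L^{\infty}$-bound for radial Sobolev functions on $B_{R}$. Since, by Proposition~\ref{prop: local existence and uniqueness}, $v(\cdot,t),w(\cdot,t)\in C^{2}(\overline{\Omega})$ for each fixed $t\in(0,T_{\max})$, no approximation argument is needed and one may work with smooth radial functions; moreover the constants produced will depend only on $n,R,\beta$ and an integrability exponent, so that the resulting bounds are automatically uniform in $t$ once we feed in the uniform‑in‑time inputs \eqref{eq: vmassinequality}, \eqref{eq: wW1q} and \eqref{eq: vW2p}.

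The device I would establish first reads: if $\varphi=\varphi(r)$ is radial and smooth on $\overline{B_{R}}$ and $q\in(1,n)$, then there is $C=C(n,q,R)>0$ with
\begin{equation*}
  |x|^{\frac{n}{q}-1}\,|\varphi(x)|\leq C\,\|\varphi\|_{W^{1,q}(B_{R})}
  \qquad\text{for all }x\in B_{R}.
\end{equation*}
Indeed, writing $|\varphi(r)|\leq\frac{2}{R}\int_{R/2}^{R}|\varphi(\rho)|\dd\rho+\int_{r}^{R}|\varphi_{s}(s)|\dd s$, the first term is bounded by $C\|\varphi\|_{L^{1}(B_{R})}\leq C\|\varphi\|_{L^{q}(B_{R})}$, while Hölder's inequality applied to $\int_{r}^{R}|\varphi_{s}|\dd s=\int_{r}^{R}\big(|\varphi_{s}|s^{(n-1)/q}\big)\,s^{-(n-1)/q}\dd s$, together with $\int_{r}^{R}s^{-(n-1)/(q-1)}\dd s\leq C r^{1-(n-1)/(q-1)}$ (legitimate since $q<n$), yields $\int_{r}^{R}|\varphi_{s}|\dd s\leq C\,\|\nabla\varphi\|_{L^{q}(B_{R})}\,r^{1-n/q}$; as $1-n/q<0$, the two contributions combine to $|\varphi(r)|\leq C\|\varphi\|_{W^{1,q}(B_{R})}\,r^{1-n/q}$.

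Applying this to $w(\cdot,t)$ via \eqref{eq: wW1q}, for each $q\in(1,n/(n-1))$ one gets $\||x|^{n/q-1}w\|_{L^{\infty}(\Omega)}\leq C(m+\|w_{0}\|_{W^{1,2}(\Omega)})$, and since $n/q-1\searrow n-2$ as $q\nearrow n/(n-1)$, given $\beta>n-2$ one may fix $q$ with $n/q-1\leq\beta$ and multiply by $|x|^{\beta-(n/q-1)}\leq R^{\beta-(n/q-1)}$ to reach \eqref{eq: wpointwise}. For $v$, note that \eqref{eq: vW2p} controls, in radial coordinates, $\int_{0}^{R}(|v_{rr}|^{p}+|v_{r}|^{p})r^{n-1}\dd r$ for $p\in(1,n/(n-2))$, so $v_{r}$ is a radial $W^{1,p}(B_{R})$ function with $\|v_{r}\|_{W^{1,p}(B_{R})}\leq C(m+\|w_{0}\|_{W^{1,2}(\Omega)}+\|v_{0}\|_{W^{2,2}(\Omega)})$; the device then gives $\||x|^{n/p-1}\nabla v\|_{L^{\infty}(\Omega)}\leq C(m+\|w_{0}\|_{W^{1,2}(\Omega)}+\|v_{0}\|_{W^{2,2}(\Omega)})$. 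Since $n/p-1\searrow n-3$ as $p\nearrow n/(n-2)$ and $\beta-1>n-3$, fix $p$ so that moreover $n/p\leq\beta$; because $n\geq4$ forces $n/p-1>1$, integrating the pointwise bound on $v_{r}$ from $r$ to $R$ and absorbing the boundary mean term (controlled by $\|v\|_{L^{1}(\Omega)}$ through \eqref{eq: vmassinequality}) produces $|v(r)|\leq C(m+\|w_{0}\|_{W^{1,2}(\Omega)}+\|v_{0}\|_{W^{2,2}(\Omega)})\,r^{2-n/p}$. Multiplying by the appropriate powers of $|x|\leq R$ makes $\||x|^{\beta-1}v\|_{L^{\infty}(\Omega)}$, $\||x|^{\beta-2}v\|_{L^{\infty}(\Omega)}$ and $\||x|^{\beta-1}\nabla v\|_{L^{\infty}(\Omega)}$ all bounded by $C_{\beta}(m+\|w_{0}\|_{W^{1,2}(\Omega)}+\|v_{0}\|_{W^{2,2}(\Omega)})$, and since $\nabla(|x|^{\beta-1}v)=(\beta-1)|x|^{\beta-3}x\,v+|x|^{\beta-1}\nabla v$, this is precisely \eqref{eq: vpointwise}.

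The only genuinely delicate point is the choice of the integrability exponents: one must take $q$ and $p$ close enough to the thresholds $n/(n-1)$ and $n/(n-2)$ that all the weights $n/q-1$, $n/p-1$ and $n/p-2$ generated by the radial embedding fall below $\beta$, $\beta-1$ and $\beta-2$ respectively — which is exactly where the strict hypothesis $\beta>n-2$ is used. Everything else is the elementary Hölder computation in the radial variable and the bookkeeping of powers of $|x|$.
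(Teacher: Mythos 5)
Your proof is correct and follows essentially the same route as the paper, which simply invokes the radial (Strauss-type) embedding of $W^{1,q}$ into weighted $L^\infty$ and refers to \cite[Lemma~3.2]{Winkler2013} and \cite[Lemma~3.3]{Mao2024b} for the details; the one-dimensional H\"older computation you carry out, together with the choice of $q$ near $n/(n-1)$ and $p$ near $n/(n-2)$ and the product-rule bookkeeping for $\nabla(|x|^{\beta-1}v)$, is exactly the content of those cited lemmas. The only difference is that you make the argument self-contained rather than delegating it to the references.
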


\begin{proof}
  The weighted uniform estimates \eqref{eq: wpointwise} and \eqref{eq: vpointwise} are direct consequences of uniform-in-time Sobolev regularity estimates \eqref{eq: wW1q} and \eqref{eq: vW2p}, since the Sobolev spaces of radial functions can be embedded into certain weighted Sobolev spaces~\cite{GuedesdeFigueiredo2011}. Moreover, detailed proofs of \eqref{eq: wpointwise} and \eqref{eq: vpointwise} can be found in \cite[Lemma~3.2]{Winkler2013} and \cite[Lemma~3.3]{Mao2024b}, respectively.
\end{proof}

\section{Construction of an integral inequality}
\label{sec: linking energy to dissipation}

The goal of this section is to establish the following integral inequality as aforementioned in the introduction.
\begin{proposition}
  \label{prop: an integral inequality}
  Let $\Omega = B_R\subset\mathbb{R}^n$ for some $n\geq 5$ and $R>0$. Assume $\tilde{m} > 0$ and $A>0$. Then there exists $C>0$ and $\theta \in(0,1)$ such that for any $(u_0,v_0,w_0)$ from the set $\mathcal{B}(\tilde{m},A)$, the inequality 
  \begin{equation}
    \label{eq: integral inequality in prop} 
    \int_0^s\int_\Omega uv\dd x\dd t 
    \leqslant C(\tilde{m}+A+1)^2(1+s) \left(\mathcal{F}(0) + \int_\Omega uv - \int_\Omega u\ln u + 1\right)^\theta 
    \end{equation}
    holds for all  $s\in(0,T_{\max})$,
    where $(u,v,w)$ and $T_{\max}$ are given as in Proposition~\ref{prop: local existence and uniqueness}, and $\mathcal{F}(0):=\mathcal{F}(u_0,v_0,w_0)$ is defined in \eqref{sym: mathcalFD}.
\end{proposition}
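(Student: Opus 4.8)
The plan is to estimate $\int_0^s\!\int_\Omega uv$ by splitting the spatial integral into an inner ball $B_\rho$ and the outer annulus $\Omega\setminus B_\rho$, and to control the inner part through a weighted second-order estimate on $v$ obtained via a Pohozaev-type test function, as outlined in Remark~\ref{r: precedents}. First I would handle the outer region: by the weighted pointwise bound \eqref{eq: vpointwise} from Lemma~\ref{le: pointwise estimate}, one has $|x|^{\beta-1}v\in L^\infty$ with norm controlled by $m+\|w_0\|_{W^{1,2}}+\|v_0\|_{W^{2,2}}\lesssim \tilde m+A$, so on $\Omega\setminus B_\rho$ one gets $v\le C(\tilde m+A)\rho^{1-\beta}$, hence $\int_{\Omega\setminus B_\rho}uv\le C(\tilde m+A)\rho^{1-\beta}\int_\Omega u = C(\tilde m+A)m\,\rho^{1-\beta}$; integrating in $t$ contributes $C(\tilde m+A)^2(1+s)\rho^{1-\beta}$, which is small for $\rho$ bounded below away from $0$ since $\beta>n-2\ge3$.

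The heart of the matter is the inner region $\int_0^s\!\int_{B_\rho}uv$. I would use the first equation to write, via the identity $u\nabla v\cdot\nabla(\ln u - v) = u|\nabla v|^2 - \nabla u\cdot\nabla v + \ldots$, or more directly test the $u$-equation against $v$ to obtain $\frac{d}{dt}\int_\Omega uv = \int_\Omega u v_t + \int_\Omega v\,\Delta u - \int_\Omega v\nabla\cdot(u\nabla v)$, and then re-express $\int_\Omega u|\nabla v|^2$ in terms of $\mathcal D$ plus lower-order pieces. But the genuinely new step, needed because $w_t$ is not controlled, is to bound $\int_0^s\!\int_{B_\rho}|\Delta v|^2$ (and $\int_0^s\!\int_{B_\rho}|\nabla v|^2$, $\int_0^s\!\int_{B_\rho}v^2$) by $\int_0^s\mathcal D + \frac12\|\Delta v - v\|_{L^2}^2 + \frac12\|v_t\|_{L^2}^2 + 1$ at time $s$, uniformly on $\mathcal B(\tilde m,A)$. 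To do this I would test the identity $v_t = \Delta v - v + w$, equivalently $w_t = \Delta w - w + u$ after substituting $u = w_t - \Delta w + w$ into an expression for $\int uv$, against the radial Pohozaev multiplier $\mathds 1_{B_\rho}|x|^{n-2}x\cdot\nabla v$; writing everything in radial coordinates and integrating by parts three times transfers the operator $\partial_t - r^{1-n}(r^{n-1}\partial_r)_r$ from $w$ onto $v$, producing a coercive term $\gtrsim \int_{B_\rho}|\Delta v|^2$ on the left (after absorbing boundary terms at $r=\rho$, which are tame by the pointwise estimates) against a remainder expressible through $v_t$, $\nabla v_t$, $v$, $\nabla v$ — precisely the quantities appearing in $\mathcal D$ and in the non-$u\ln u$ part of $\mathcal F$. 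Integrating this in time over $(0,s)$ and using $\mathcal F' = -\mathcal D$ gives $\int_0^s\!\int_{B_\rho}|\Delta v|^2 \le C(\tilde m+A)^2\bigl(\int_0^s\mathcal D + \tfrac12\|\Delta v-v\|_{L^2}^2|_s + \tfrac12\|v_t\|_{L^2}^2|_s + 1\bigr) = C(\tilde m+A)^2\bigl(\mathcal F(0) + \int_\Omega uv - \int_\Omega u\ln u + 1\bigr)$, using $\int_0^s\mathcal D = \mathcal F(0) - \mathcal F(s)$ and $\mathcal F(s) = \int u\ln u - \int uv + \tfrac12\|v_t\|_{L^2}^2 + \tfrac12\|(-\Delta+1)v\|_{L^2}^2 \ge \int u\ln u - \int uv + \tfrac12\|v_t\|_{L^2}^2 + \tfrac12\|\Delta v - v\|_{L^2}^2 - C$, the last $-C$ coming from controlling cross terms in $\|(-\Delta+1)v\|^2$ versus $\|\Delta v - v\|^2$ by the uniform bound \eqref{eq: vW2p}.

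With the space-time second-order bound in hand, I would close the inner estimate by interpolation. On $B_\rho$, Gagliardo–Nirenberg or a weighted Hardy–Sobolev inequality for radial functions gives $\|v\|_{L^\infty(B_\rho)}$ or a suitable $L^q$ norm controlled by $\|\Delta v\|_{L^2(B_\rho)}^{a}\|v\|_{L^1}^{1-a}$ type bounds with the $\|v\|_{L^1}\lesssim \tilde m+A$ factor uniform; then $\int_{B_\rho}uv \le \|v\|_{L^\infty(B_\rho)}\int_\Omega u \le C(\tilde m+A)\|\Delta v\|_{L^2(B_\rho)}^{a}$, and integrating in $t$ with Hölder's inequality in time (exponent $2/a>1$) yields $\int_0^s\!\int_{B_\rho}uv \le C(\tilde m+A)(1+s)^{1-a/2}\bigl(\int_0^s\!\int_{B_\rho}|\Delta v|^2\bigr)^{a/2} \le C(\tilde m+A)^2(1+s)\bigl(\mathcal F(0) + \int_\Omega uv - \int_\Omega u\ln u + 1\bigr)^{a/2}$. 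Setting $\theta := a/2\in(0,1)$ and combining with the outer estimate (which, being $O((1+s)\rho^{1-\beta})$, is dominated by the same right-hand side after noting the bracket is $\gtrsim 1$) gives \eqref{eq: integral inequality in prop}.

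The main obstacle I anticipate is the threefold integration by parts with the Pohozaev multiplier on the truncated ball $B_\rho$: one must track all boundary contributions at $r=\rho$ and show they are absorbed by the pointwise weighted estimates of Lemma~\ref{le: pointwise estimate} (so that the final constant depends only on $\tilde m,A$ and not on the solution), and one must verify that the coercive $\int_{B_\rho}|\Delta v|^2$ term genuinely survives with a positive sign after all the manipulations — this is where the restriction $n\ge5$ should enter, ensuring the relevant weighted exponents and Sobolev embeddings are in the admissible range. Secondarily, the time-integrated application of $\mathcal F' = -\mathcal D$ must be done carefully so that the $\int_\Omega u\ln u$ term at time $s$ (which has no sign and cannot be controlled) is never used — only the combination $\int_\Omega uv - \int_\Omega u\ln u$ appears, exactly as in the statement.
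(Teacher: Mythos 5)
Your outer-region estimate and your Poho\v{z}aev step (testing the $w$-equation against $\mathds{1}_{B_\rho}|x|^{n-2}x\cdot\nabla v$, integrating by parts three times, and converting the result into a space--time bound on $\int_0^s\|\Delta v\|_{L^2(B_\rho)}^2$ in terms of $\int_0^s\mathcal D$ and the non-entropy part of $\mathcal F$) match the paper's Lemmas~\ref{le: 1/8+1/4}--\ref{le: deltavL2} in spirit. But the step that closes your argument --- and the one that produces your exponent $\theta=a/2$ --- fails in the dimensional range of the proposition. You estimate $\int_{B_\rho}uv\le\|v\|_{L^\infty(B_\rho)}\int_\Omega u$ and then interpolate $\|v\|_{L^\infty(B_\rho)}\lesssim\|\Delta v\|_{L^2(B_\rho)}^{a}\|v\|_{L^1}^{1-a}$. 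The Gagliardo--Nirenberg exponent here is $a=2n/(n+4)$, which exceeds $1$ precisely when $n\ge5$: $W^{2,2}\not\hookrightarrow L^\infty$ in these dimensions, and the radial weighted embeddings of Lemma~\ref{le: pointwise estimate} only control $v$ \emph{away from the origin} --- exactly where they are useless, since the singularity forms at $r=0$. No $L^{q}$--$L^{q'}$ pairing rescues this, because the only uniform information on $u$ is its $L^1$ norm (the entropy $\int_\Omega u\ln u$ enters the target bracket with a minus sign and cannot be spent). The paper avoids this dead end with the decoupling device of Lemma~\ref{le: uvL1smallball}: it tests $u=w_t-\Delta w+w$ against $v\phi^{(\rho)}$, so that $\int_{B_{\rho/2}}uv$ is bounded by $\int_\Omega(\Delta v-v)^2\phi^{(\rho)}-\int_\Omega f^2\phi^{(\rho)}$ plus a total time derivative and tail terms --- i.e.\ $u$ is never paired with $v$ in $L^1\times L^\infty$ at all.

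A second, structural consequence: once the $L^\infty$ interpolation is gone, your mechanism for sublinearity disappears, because the surviving route bounds $\int_0^s\int_{B_\rho}uv$ by (a constant times) $\int_0^s\|\Delta v\|_{L^2(B_\rho)}^2$, and the Poho\v{z}aev estimate controls that quantity only up to the term $\rho^2\int_0^s\|\nabla f\|_{L^2}^2\le\tfrac{\rho^2}{2}\int_0^s\mathcal D$, which is \emph{linear} in the dissipation for fixed $\rho$ and therefore yields $\theta=1$, useless for the superlinear ODE argument. You keep $\rho$ bounded below (to tame the outer term $\rho^{1-\beta}$), which locks you into this linear regime. The paper's resolution is to choose the radius adaptively, $\rho(s)=\min\{R/2,\|\nabla f\|_{L^2(\Omega_s)}^{-1/(2\kappa-n)}\}$, balancing $\rho^2\|\nabla f\|_{L^2(\Omega_s)}^2$ against the outer-region penalty $\rho^{n-2\kappa}(1+s)$ so that both become $\|\nabla f\|_{L^2(\Omega_s)}^{2\vartheta}$ with $\vartheta<1$; the remaining terms ($\sqrt{ms}\,\|g\|_{L^2(\Omega_s)}$, the powers $2(n+2)/(n+4)$ and $2(n+3)/(n+4)$ of $\|\Delta v-v\|_{L^2}$ and $\|f\|_{L^2}$) are already sublinear and are unified by Young's inequality and concavity. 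Your proposal would need both of these ingredients --- the cutoff-test decoupling of $\int uv$ and the adaptive radius --- to go through.
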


\subsection{Notations and setup}
For convenience, we shall switch to the radial notation without any further comment, e.g., writing $v (r, t)$ instead of $v (x, t)$ when appropriate. We denote $\omega_n$ the surface area of the unit ball in $\mathbb{R}^n$ and denote $C > 0$ the generic constant, which may vary from line to line.

From now on, we fix $n\geq5$, $R>0$, $\kappa > n - 2$, $\tilde{m} > 0$ and $A > 0$. 
Lemma~\ref{le: basic estimates} and Proposition~\ref{le: pointwise estimate} entails one can find a positive constant $C_0>0$ with the following properties:   
if initial datum $(u_0,v_0,w_0)$ satisfies \eqref{h: initial data} such that
\begin{equation}
  \label{eq: initial constraints}
  \int_\Omega u_0 = m < \tilde{m}
  \quad\text{and}\quad 
  \|w_0\|_{W^{1,2}(\Omega)} + \|v_0\|_{W^{2,2}(\Omega)} < A,
\end{equation}
i.e., $(u_0,v_0,w_0)\in\mathcal{B}(\tilde{m}, A)$,
then mass constraints 
\begin{equation}
  \label{eq: constraint of mass}
  \int_\Omega u + \int_\Omega v + \int_\Omega w < \tilde{m} + C_0A =:  M \quad\text{for all } t\in(0,T_{\max})
\end{equation}
and pointwise estimates
\begin{equation}
  \label{eq: pointwise estimates}
  \||x|^{\kappa}w\|_{L^{\infty}(\Omega)} + \||x|^{\kappa-1} v\|_{W^{1,\infty}(\Omega)} < C_0(\tilde{m} + A) =: B \quad\text{for all } t\in(0,T_{\max}),
\end{equation}
as well as first-order estimates
\begin{equation}
  \label{eq: first-order estimates}
  \|v_t\|_{L^1(\Omega)} < C_0(\tilde{m} + A) = B 
  \quad \text{for all } t\in(0,T_{\max})
\end{equation}
hold,
where the radial functions $(u,v,w)$ and $T_{\max} = T_{\max}(u_0,v_0,w_0)$ are given in Proposition~\ref{prop: local existence and uniqueness}.  

To construct the integral inequality~\eqref{eq: integral inequality in prop}, 
we introduce the following quantities 
\begin{equation}
  \label{sym: f}
  f := - \Delta v + v - w = -v_t,\quad (x,t)\in \Omega\times(0,T_{\max}), 
\end{equation}
and 
\begin{equation}
  \label{sym: g}
  g := \left(\frac{\nabla u}{\sqrt u} - \sqrt u \nabla v\right)\cdot \frac{x}{|x|},
  \quad x\in\Omega\setminus\{0\}, \quad t\in(0,T_{\max}).
\end{equation} 
Then the energy functional $\mathcal{F}$ and the dissipation rate $\mathcal{D}$ are given by 
\begin{equation} 
  \label{sym: mathacl F D}
\begin{aligned} 
& \mathcal{F}(t)=\int_{\Omega}(u \log u- u v)
+ \frac{1}{2} \int_\Omega |f|^2 
+\frac{1}{2} \int_{\Omega}|\Delta v - v|^2 \quad \text{and} \\
& \mathcal{D}(t)=
2\int_{\Omega}\left(\left|\nabla f\right|^2+\left|f\right|^2\right)
+\int_{\Omega} g^2 .
\end{aligned}
\end{equation}

\subsection{Interpolation inequalities for concentration \texorpdfstring{$v$}{v}}

We shall collect uniform-in-time estimates for concentration $v$, as direct consequences or variants of the Gagliardo-Nirenberg inequality~\cite{Nirenberg1959}, which will be frequently used. 

\begin{lemma}
  \label{le: GN}
  Assume that $(u_0,v_0,w_0)\in\mathcal{B}(\tilde{m}, A)$.
  Then there exists $C>0$ such that 
  \begin{equation}
    \label{eq: v2<deltav}
    \|v\|_{L^2(\Omega)}
\leq C M^{4/(n+4)}\|\Delta v - v\|_{L^2(\Omega)}^{n/(n+4)}
\quad \text{for all } t\in(0,T_{\max}),
  \end{equation}
  and 
  \begin{equation}
    \label{eq: nablav2<deltav} 
      \|\nabla v\|_{L^2(\Omega)}
      \leq CM^{2/(n+4)}\|\Delta v - v\|_{L^2(\Omega)}^{(n+2)/(n+4)}
      \quad \text{for all } t\in(0,T_{\max}),
  \end{equation}
  and 
  \begin{equation}
    \label{eq: f gradf gn}
    \|f\|_{L^2(\Omega)}^2 
    \leq CB^{4/(n+2)} \|f\|_{W^{1,2}(\Omega)}^{2n/(n+2)},
    \quad \text{for all } t\in(0,T_{\max}),
  \end{equation}
  as well as 
  \begin{equation}
    \label{eq:v2innergn}
    \|v\|_{L^2(\Omega)}^2 
    \leq \frac{1}{2}\|\nabla v\|_{L^2(B_\rho)}^2 
    + C(M^2 + B^2)\rho^{n-2\kappa}
    \quad \text{for all } (\rho, t)\in(0,R)\times(0,T_{\max}),   
  \end{equation}
  where the solution $(u,v,w)$ of the system~\eqref{sys: ks isp pp/ep/e} and the number $T_{\max}\in(0,\infty]$ are defined in Proposition~\ref{prop: local existence and uniqueness},
  and $f = -v_t$ is defined at \eqref{sym: f}. 
\end{lemma}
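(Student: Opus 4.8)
The plan is to read \eqref{eq: v2<deltav}, \eqref{eq: nablav2<deltav} and \eqref{eq: f gradf gn} off the Gagliardo--Nirenberg inequality together with an elliptic identity for the Neumann problem, and to prove \eqref{eq:v2innergn} by a domain decomposition combined with an elementary radial computation. The only a priori input is the mass bound \eqref{eq: constraint of mass}, the weighted pointwise bound \eqref{eq: pointwise estimates} and the first-order bound \eqref{eq: first-order estimates}. First I would record that, expanding the square and using $\int_\Omega v\,\Delta v=-\int_\Omega|\nabla v|^2$ (legitimate since $\partial_\nu v=0$),
\[
\|\Delta v-v\|_{L^2(\Omega)}^2=\|\Delta v\|_{L^2(\Omega)}^2+2\|\nabla v\|_{L^2(\Omega)}^2+\|v\|_{L^2(\Omega)}^2 ,
\]
so that, combined with the standard elliptic estimate $\|v\|_{W^{2,2}(\Omega)}\leq C(\|\Delta v\|_{L^2(\Omega)}+\|v\|_{L^2(\Omega)})$ for the Neumann Laplacian on the smooth domain $\Omega=B_R$, one obtains $\|v\|_{W^{2,2}(\Omega)}\leq C\|\Delta v-v\|_{L^2(\Omega)}$.

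For \eqref{eq: v2<deltav} and \eqref{eq: nablav2<deltav} I would then interpolate, via the Gagliardo--Nirenberg inequality on $B_R$, the quantity $\|v\|_{L^2(\Omega)}$ (respectively $\|\nabla v\|_{L^2(\Omega)}$) between $\|v\|_{W^{2,2}(\Omega)}$ and $\|v\|_{L^1(\Omega)}$, the relevant exponents being exactly $n/(n+4)$ and $(n+2)/(n+4)$; since $v\geq0$, \eqref{eq: constraint of mass} gives $\|v\|_{L^1(\Omega)}=\int_\Omega v<M$, and the previous step bounds the $W^{2,2}$ factor by $\|\Delta v-v\|_{L^2(\Omega)}$, while a possible lower-order term in the bounded-domain form of the inequality is absorbed using $\|v\|_{L^1(\Omega)}\leq C\|v\|_{W^{2,2}(\Omega)}\leq C\|\Delta v-v\|_{L^2(\Omega)}$ together with $\|v\|_{L^1(\Omega)}<M$. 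For \eqref{eq: f gradf gn} the same scheme applies to $f=-v_t$ (cf.\ \eqref{sym: f}): interpolate $\|f\|_{L^2(\Omega)}$ between $\|f\|_{W^{1,2}(\Omega)}$ and $\|f\|_{L^1(\Omega)}$ with exponent $n/(n+2)$, insert $\|f\|_{L^1(\Omega)}=\|v_t\|_{L^1(\Omega)}<B$ from \eqref{eq: first-order estimates}, and square.

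The estimate \eqref{eq:v2innergn} is where the actual work lies. I would split $\int_\Omega v^2=\int_{B_\rho}v^2+\int_{B_R\setminus B_\rho}v^2$. On the annulus, \eqref{eq: pointwise estimates} gives $|v(x)|\leq B|x|^{1-\kappa}\leq B\rho^{1-\kappa}$ for $|x|\geq\rho$ (recall $\kappa>1$), hence $\int_{B_R\setminus B_\rho}v^2\leq\omega_n B^2\int_\rho^R r^{\,n+1-2\kappa}\dd r\leq CB^2\rho^{\,n-2\kappa}$, the last bound being a direct integration that holds, using $\rho<R$, in each case for the sign of $n+2-2\kappa$. For the inner ball, writing $v=v(r)$ for the radial profile, integration of $\partial_r\big(v(r)^2r^n\big)$ over $(0,\rho)$ gives
\[
n\int_0^\rho v^2r^{n-1}\dd r=v(\rho)^2\rho^n-2\int_0^\rho v\,v_r\,r^n\dd r ,
\]
and then Cauchy--Schwarz (after extracting a factor $r\leq\rho$) followed by Young's inequality yields, since $\|v\|_{L^2(B_\rho)}^2=\omega_n\int_0^\rho v^2r^{n-1}\dd r$ and $\|\nabla v\|_{L^2(B_\rho)}^2=\omega_n\int_0^\rho v_r^2r^{n-1}\dd r$,
\[
\|v\|_{L^2(B_\rho)}^2\leq\frac{4\rho^2}{n^2}\,\|\nabla v\|_{L^2(B_\rho)}^2+\frac{2\omega_n}{n}\,v(\rho)^2\rho^n
\leq\frac{4\rho^2}{n^2}\,\|\nabla v\|_{L^2(B_\rho)}^2+R^2B^2\rho^{\,n-2\kappa},
\]
where the last step uses $v(\rho)^2\rho^n\leq B^2\rho^{\,n+2-2\kappa}$ from \eqref{eq: pointwise estimates}. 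Adding the two contributions, $\int_\Omega v^2\leq\frac{4\rho^2}{n^2}\|\nabla v\|_{L^2(B_\rho)}^2+C(M^2+B^2)\rho^{\,n-2\kappa}$ for every $\rho\in(0,R)$.

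The one genuine obstacle is that the coefficient $4\rho^2/n^2$ need not be $\leq\tfrac12$ when $R$ is large, while \eqref{eq:v2innergn} asks for the coefficient $\tfrac12$ uniformly in $\rho$. I would settle this by applying the inequality just derived with $\rho$ replaced by $\rho_\star:=\min\{\rho,\,n/(2\sqrt2)\}\in(0,R)$, for which $4\rho_\star^2/n^2\leq\tfrac12$: then $\|\nabla v\|_{L^2(B_{\rho_\star})}\leq\|\nabla v\|_{L^2(B_\rho)}$ because $\rho_\star\leq\rho$, whereas $\rho_\star^{\,n-2\kappa}\leq C\rho^{\,n-2\kappa}$ because $n-2\kappa<0$ forces $\rho^{\,n-2\kappa}\geq R^{\,n-2\kappa}$ for all $\rho\in(0,R)$ while $\rho_\star\leq R$. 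This produces \eqref{eq:v2innergn}; all constants depend only on $n$, $R$ and $\kappa$, the powers of $M$ and $B$ being displayed explicitly, so they are admissible.
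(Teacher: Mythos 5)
Your proposal is correct, and for the first three inequalities it follows essentially the paper's route: Gagliardo--Nirenberg interpolation against the $L^1$-norms controlled by \eqref{eq: constraint of mass} and \eqref{eq: first-order estimates}, with the elliptic estimate $\|v\|_{W^{2,2}(\Omega)}\le C\|\Delta v-v\|_{L^2(\Omega)}$ supplying the second-order factor (the paper obtains \eqref{eq: nablav2<deltav} slightly differently, from \eqref{eq: v2<deltav} via $\|\nabla v\|_{L^2}^2+\|v\|_{L^2}^2=\int_\Omega(-\Delta v+v)v\le\|\Delta v-v\|_{L^2}\|v\|_{L^2}$, rather than by a direct interpolation of $\|\nabla v\|_{L^2}$; both are one-line variants). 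Where you genuinely diverge is \eqref{eq:v2innergn}. The paper first applies Gagliardo--Nirenberg and Young on the \emph{whole} domain to get $\|v\|_{L^2(\Omega)}^2\le\tfrac12\|\nabla v\|_{L^2(\Omega)}^2+CM^2$ — so the coefficient $\tfrac12$ comes for free, uniformly in $\rho$ — and only then localizes by splitting $\|\nabla v\|_{L^2(\Omega)}^2=\|\nabla v\|_{L^2(B_\rho)}^2+\|\nabla v\|_{L^2(\Omega\setminus B_\rho)}^2$ and absorbing the annulus part via the pointwise decay of $\nabla v$ from \eqref{eq: pointwise estimates}. You instead split $v$ itself, control the annulus by the decay of $v$, and control the inner ball by an explicit radial Poincar\'e-type identity with constant $4\rho^2/n^2$, which forces the extra $\rho_\star$-truncation step to recover the coefficient $\tfrac12$. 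Your version is more elementary (no interpolation inequality is needed for this part) but needs that additional device; the paper's is shorter because the smallness of the gradient coefficient is built into Young's inequality. Your case discussion for $\int_\rho^R r^{n+1-2\kappa}\,\mathrm{d}r$ and the absorption of the constant $(n/(2\sqrt2))^{n-2\kappa}$ into $C\rho^{n-2\kappa}$ via $\rho^{n-2\kappa}\ge R^{n-2\kappa}$ are both sound, and the harmless omission of the factor $2\omega_n/n$ in one display is absorbed into the generic constant.
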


\begin{proof}
Gagliardo-Nirenberg inequality \cite{Nirenberg1959} provides a constant $C_{gn} > 0$ such that 
\begin{align}
  \label{eq: psi l2 grad} 
  \|\psi\|_{L^2(\Omega)}^2
  &\leq C_{gn} \|\nabla \psi\|_{L^2(\Omega)}^{2n/(n+2)}\|\psi\|_{L^1(\Omega)}^{4/(n+2)} 
  + C_{gn}\|\psi\|_{L^1(\Omega)}^2
  \quad \text{for all } \psi\in W^{1,2}(\Omega)
\end{align} 
and 
\begin{align}
  \label{eq: psi l2 W12} 
\|\psi\|_{L^2(\Omega)}^2
&\leq C_{gn} \|\psi\|_{W^{1,2}(\Omega)}^{2n/(n+2)}\|\psi\|_{L^1(\Omega)}^{4/(n+2)}
\quad \text{for all } \psi\in W^{1,2}(\Omega)
  \end{align} 
as well as  
\begin{align}
  \label{eq: varphi l2 delta} 
\|\varphi\|_{L^2(\Omega)}
&\leq C_{gn} \|\Delta \varphi - \varphi\|_{L^2(\Omega)}^{n/(n+4)}\|\varphi\|_{L^1(\Omega)}^{4/(n+4)}
\end{align} 
holds for all $\varphi\in W^{2,2}(\Omega)$ with $\partial\varphi/\partial\nu = 0$ on $\partial\Omega$.
Here, the elliptic $L^2$-regularity theory has been used, 
i.e., there exists $C_e > 0$ such that 
\begin{equation*}
\|\varphi\|_{W^{2,2}(\Omega)} \leq C_e \|\Delta \varphi - \varphi\|_{L^2(\Omega)}
\end{equation*} 
holds for all $\varphi\in W^{2,2}(\Omega)$ with $\partial\varphi/\partial\nu = 0$ on $\partial\Omega$.  
Therefore, \eqref{eq: v2<deltav} is a direct consequence of mass constraints~\eqref{eq: constraint of mass} and \eqref{eq: varphi l2 delta}. 
Invoking the first-order estimate \eqref{eq: first-order estimates}, we thus have \eqref{eq: f gradf gn} from \eqref{eq: psi l2 W12}.  
Using \eqref{eq: psi l2 grad} and mass constraints \eqref{eq: constraint of mass}, we estimate by Young inequality 
\begin{equation}
  \label{eq: v gradv gn}
  \begin{aligned} 
  \|v\|_{L^2(B_\rho)}^2 
  &\leq C_{gn}M^{4/(n+2)} \|\nabla v\|_{L^2(\Omega)}^{2n/(n+2)} 
  + C_{gn} M^2\\
  &\leq \frac{1}{2}\|\nabla v\|_{L^2(\Omega)}^2
  + 2^nC_{gn}^{(n+2)/2}M^2
  + C_{gn} M^2
  \end{aligned}
\end{equation}
for all $\rho\in(0,R)$ and $t\in(0,T_{\max})$.
Pointwise estimates \eqref{eq: pointwise estimates} entails 
\begin{equation}
  \label{eq: nablavouterregion}
  \begin{aligned} 
    \|\nabla v\|^2_{L^2(\Omega)}
    &= \|\nabla v\|_{L^2(B_\rho)}^2 
    + \|\nabla v\|^2_{L^2(\Omega\setminus B_\rho)}\\
    &\leq \|\nabla v\|_{L^2(B_\rho)}^2  
    +B^2\||x|^{1-\kappa}\|_{L^2(\Omega\setminus B_\rho)}^2\\
    &\leq \|\nabla v\|_{L^2(B_\rho)}^2 
    + \omega_nB^2R^2\rho^{n-2\kappa}
  \end{aligned}
\end{equation}
for all $\rho\in(0,R)$ and $t\in(0,T_{\max})$. 
Inserting \eqref{eq: nablavouterregion} into \eqref{eq: v gradv gn} yields \eqref{eq:v2innergn}.
Integrating by parts, using H\"older inequality and~\eqref{eq: v2<deltav}, we get 
\begin{align*} 
    \|\nabla v\|^2_{L^2(\Omega)} 
    + \|v\|_{L^2(\Omega)}^2 
    &= \int_\Omega (-\Delta v + v)v
    \leq \|\Delta v - v\|_{L^2(\Omega)}\|v\|_{L^2(\Omega)}\\
    &\leq CM^{4/(n+4)}\|\Delta v - v\|_{L^2(\Omega)}^{2(n+2)/(n+4)}
\end{align*}
which verifies \eqref{eq: nablav2<deltav}.
\end{proof}

\subsection{Spatial-temporal estimates of \texorpdfstring{$uv$}{uv}}

We incorporate decoupling techniques from Cao and Fuest~\cite{Cao2025},
which assist us to convert the cross integral $\int_\Omega uv$ into the $L^2_{\loc}$--estimate for the Laplacian of concentration $v$. 
Notably,
we shall exhaust the contribution of the second-order terms $\int_\Omega |\Delta v - v|^2$ and $\int_\Omega v_t^2$ in the energy functional $\mathcal{F}$ 
to compensate terms related to $w_t$. 
We remark that the mixed term $\int_\Omega uv$ alone can be dominated by some power of the dissipation rate for the three- and higher-dimensional classical Keller-Segel model~$\eqref{sys: ks isp pp/ep/e}_{0,0}$~\cite{Winkler2013} with the exception in two dimensions~\cite{Mizoguchi2014}, and even for the five- and higher-dimensional Nagai-type variant of indirect chemotaxis~\cite{Mao2024b}. 
Whether it can be explained as either technical flaw or aggregation-suppressing effect of indirect attractant production \cite{Hu2016,Tao2017}, is unknown to us.

\begin{lemma}
  \label{le: uvL1smallball}
  Let $(u_0,v_0,w_0)\in\mathcal{B}(\tilde{m}, A)$.
  There exists a constant $C > 0$ such that 
  \begin{equation}
    \label{eq: intuvleq}
    \begin{aligned}
      \int_0^s\int_{\Omega} uv \dd x \dd t
      &\leq 2\int_0^s\|\Delta v\|_{L^2(B_\rho)}^2 \dd t
      + 2\int_0^s\|v\|_{L^2(B_\rho)}^2 \dd t \\
      &\quad + C M^{4/(n+4)}\|\Delta v - v\|_{L^2(\Omega)}^{2(n+2)/(n+4)} 
      + C M^{4/(n+4)}\|f\|_{L^2(\Omega)}^{2(n+2)/(n+4)}\\ 
      &\quad + CB(m + B)\rho^{n-2\kappa}s
    \end{aligned}
  \end{equation}
  holds for all $\rho\in (0,R)$ and $s\in(0,T_{\max})$.
\end{lemma}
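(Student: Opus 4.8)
The plan is to test the first equation of \eqref{sys: ks isp pp/ep/e} against a suitable multiplier and integrate by parts. Write $uv = \sqrt u \cdot \sqrt u\, v$ and recall the decoupling idea: since $u_t = \nabla\cdot(\nabla u - u\nabla v) = \nabla\cdot(\sqrt u\, g_\ast)$ in a formal sense, one can try to recover $\int_\Omega uv$ from $\int_\Omega u\,\Delta v$ by elliptic-type manipulations. More concretely, following Cao--Fuest~\cite{Cao2025} I would use the pointwise identity coming from the first equation tested against $v$, namely
\begin{equation*}
\frac{d}{dt}\int_\Omega u v = \int_\Omega u_t v + \int_\Omega u v_t
= -\int_\Omega (\nabla u - u\nabla v)\cdot\nabla v + \int_\Omega u v_t,
\end{equation*}
but this introduces $\int_\Omega u|\nabla v|^2$, which is awkward. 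Instead, the cleaner route—and the one the statement of Lemma~\ref{le: uvL1smallball} hints at, since the right-hand side features $\|\Delta v\|_{L^2(B_\rho)}$—is to integrate by parts \emph{in space} on $\int_\Omega uv$ directly: write $\int_\Omega uv = \int_\Omega u(v - \Delta v) + \int_\Omega u\,\Delta v$. The first term is $-\int_\Omega u\, f - \int_\Omega uw$ by \eqref{sym: f}, and $\int_\Omega uw\geq0$ can be discarded (all quantities are nonnegative), so $\int_\Omega uv \leq \int_\Omega u(-f) + \int_\Omega u\,\Delta v \leq \|u\|_{L^{q'}}\|f\|_{L^{q}} + \int_\Omega u\,\Delta v$; but $u$ is only $L^1$, so the $\int_\Omega u\,\Delta v$ term must be handled by a genuinely different mechanism rather than Hölder.

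The key maneuver is therefore a Pohozaev/Rellich-type localization: multiply and use that for radial functions $\int_{B_\rho} u\,\Delta v$ can be rewritten via $\Delta v = v - f - w$ on the \emph{outer} region $\Omega\setminus B_\rho$ where the weighted pointwise estimates \eqref{eq: pointwise estimates} give decay, while on the inner ball $B_\rho$ one integrates by parts to trade $\int_{B_\rho} u\,\Delta v$ against $\int_{B_\rho}\nabla u\cdot\nabla v$ plus a boundary term on $\partial B_\rho$, and then bounds $\int_{B_\rho}\nabla u\cdot\nabla v$ using the structure of $g$ from \eqref{sym: g}. Actually, re-examining the target inequality \eqref{eq: intuvleq}, the dominant terms on the right are $2\int_0^s\|\Delta v\|_{L^2(B_\rho)}^2$ and $2\int_0^s\|v\|_{L^2(B_\rho)}^2$, suggesting the simplest honest estimate: split $\Omega = B_\rho \cup (\Omega\setminus B_\rho)$, on the outer region use \eqref{eq: pointwise estimates} to get $\int_{\Omega\setminus B_\rho} uv \leq \|{|x|^{\kappa-1}v}\|_{L^\infty}\int_\Omega u |x|^{1-\kappa} \leq B\cdot m \cdot \rho^{1-\kappa}$... but the exponent is $\rho^{n-2\kappa}$, not $\rho^{1-\kappa}$, so the outer estimate must also exploit the decay of $w$ or a Hölder pairing of the $|x|^{-\kappa}$ weight against itself. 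I would pair $v$ against $\sqrt u$ using $u = |\nabla(\text{something})|$-type control, or more plausibly estimate $\int_{\Omega\setminus B_\rho} uv$ crudely by passing through $\int_\Omega u\,\Delta v$ after all: on the inner ball bound $|\int_{B_\rho} u\,\Delta v| \leq \|u\|_{L^2(B_\rho)}\|\Delta v\|_{L^2(B_\rho)}$ and absorb $\|u\|_{L^2}$ via Young's inequality into a dissipation-controlled quantity, thereby extracting the $\|\Delta v\|_{L^2(B_\rho)}^2$ term; then on $B_\rho$ also write $\int_{B_\rho} uv = \int_{B_\rho} u(v-\Delta v) + \int_{B_\rho} u\Delta v$ and for $v - \Delta v = f + w$ use the weighted $L^\infty$ bound on $w$ combined with the $L^1$ bound on $f$. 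The remaining term $\int_0^s\|v\|_{L^2(B_\rho)}^2$ presumably arises from completing the square or from an application of \eqref{eq:v2innergn}.

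In order of execution: (1) fix $\rho\in(0,R)$, $s\in(0,T_{\max})$, and decompose $\int_\Omega uv = \int_{B_\rho} uv + \int_{\Omega\setminus B_\rho} uv$; (2) bound the outer integral using the pointwise weighted estimates \eqref{eq: pointwise estimates} and the mass bound \eqref{eq: constraint of mass}, producing the $CB(m+B)\rho^{n-2\kappa}s$ term after integrating in $t$ (and inserting a Hölder pairing of $|x|^{-\kappa}$-type weights to get the correct power); (3) for the inner integral, substitute $v = (v - \Delta v) + \Delta v$, estimate $\int_{B_\rho}u(v-\Delta v) = \int_{B_\rho}u(f+w)$ via $\|f\|_{L^1}$, $\|{|x|^\kappa w}\|_{L^\infty}$ and $\|u\|_{L^1}$, and estimate $\int_{B_\rho}u\,\Delta v$ via Cauchy--Schwarz as $\leq \|u\|_{L^2(B_\rho)}\|\Delta v\|_{L^2(B_\rho)}$; (4) control $\|u\|_{L^2(B_\rho)}$ — this is where the identity $u = \left(\frac{\nabla u}{\sqrt u}\right)\cdot\frac{x}{|x|}\cdot(\cdots)$ and the quantity $g$ enter, since $u$ alone has no good $L^2$ bound, so one must borrow from $\int_\Omega g^2$ or accept a factor of $\|\Delta v - v\|_{L^2}$ and $\|f\|_{L^2}$ as the inequality's right side indicates; (5) apply the Gagliardo--Nirenberg estimates \eqref{eq: v2<deltav}, \eqref{eq:v2innergn} from Lemma~\ref{le: GN} to convert leftover $\|v\|_{L^2}$ and $\|\nabla v\|_{L^2}$ factors into the stated powers of $\|\Delta v - v\|_{L^2}$; (6) integrate everything over $t\in(0,s)$. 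The main obstacle I anticipate is step (4): producing the precise terms $CM^{4/(n+4)}\|\Delta v - v\|_{L^2}^{2(n+2)/(n+4)}$ and $CM^{4/(n+4)}\|f\|_{L^2}^{2(n+2)/(n+4)}$ requires cleverly splitting the product $u\cdot\Delta v$ (or $u\cdot v$) so that the $L^1$-only control of $u$ never appears against anything worse than an $L^\infty$ or interpolated quantity, which is exactly the ``decoupling" trick of Cao--Fuest — getting the exponent bookkeeping right (the $2(n+2)/(n+4)$ and $4/(n+4)$ arising from interpolating $W^{2,2}\hookrightarrow L^\infty$ fails for $n\geq5$, so it must instead be the $L^2$-based GN inequality \eqref{eq: varphi l2 delta} combined with a Young split) is the delicate part.
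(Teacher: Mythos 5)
Your proposal misses the one idea the paper's proof actually turns on, and none of the substitutes you float would close the argument. The ``decoupling'' of Cao--Fuest is not applied to the $u$-equation: it consists in replacing $u$ by $w_t-\Delta w+w$ from the \emph{third} equation, i.e.\ testing $u=w_t-\Delta w+w$ against $v\phi^{(\rho)}$ for a smooth radial cutoff $\phi^{(\rho)}$ supported in $B_\rho$ and equal to $1$ on $B_{\rho/2}$. After integrating by parts so that all derivatives land on $v\phi^{(\rho)}$, the only place $u$ survives is the harmless outer integral $\int_{\Omega\setminus B_{\rho/2}}uv\leq Bm\sup_{r>\rho/2}r^{2-\kappa}$; everything else is expressed through $v$, $w=-\Delta v+v-f$ and $f=-v_t$, which do carry the needed $L^2$ and weighted $L^\infty$ control. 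In particular, the terms $2\|\Delta v\|_{L^2(B_\rho)}^2+2\|v\|_{L^2(B_\rho)}^2$ are not produced by a Cauchy--Schwarz step $\|u\|_{L^2}\|\Delta v\|_{L^2}$: they come from the algebraic identity $(-\Delta v+v-v_t)w=(-\Delta v+v+f)(-\Delta v+v-f)=(\Delta v-v)^2-f^2$; and the exponents $2(n+2)/(n+4)$ come from the time-boundary term $\int_\Omega wv\phi^{(\rho)}$ left over from $\int_0^s\int_\Omega (wv)_t\phi^{(\rho)}$, estimated by $\|\Delta v-v\|_{L^2}\|v\|_{L^2}+\|f\|_{L^2}\|v\|_{L^2}$ together with \eqref{eq: v2<deltav}. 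This is also precisely why the lemma is stated as an integral-in-time inequality.

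Your own route stalls exactly where you suspect it does. With $u$ controlled only in $L^1$ there is no bound on $\|u\|_{L^2(B_\rho)}$, and the dissipation term $\int_\Omega g^2$ cannot supply one ($g$ controls the radial part of $\nabla\sqrt u-\sqrt u\,\nabla v$, not $u$ itself), so $\int_{B_\rho}u\,\Delta v\leq\|u\|_{L^2}\|\Delta v\|_{L^2}$ is a dead end, as is $\int_{B_\rho}uf$ with $f$ available only in $L^1$. There is also a sign slip: since $f=-\Delta v+v-w$ one has $v-\Delta v=f+w$, hence $\int_\Omega u(v-\Delta v)=\int_\Omega uf+\int_\Omega uw$, and the nonnegative term $\int_\Omega uw$ sits on the wrong side to be ``discarded'' when you are after an upper bound. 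Finally, the outer-region exponent needs no extra H\"older pairing: it follows from the pointwise decay $v(r)\leq Br^{2-\kappa}$ together with $\rho^{2-\kappa}\leq R^{\kappa+2-n}\rho^{n-2\kappa}$, valid because $\kappa>n-2$.
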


\begin{proof}
  Let $\rho\in (0,R)$ and $s\in(0,T_{\max})$.
  We introduce a cut-off function defined as 
  \begin{displaymath}
    \phi^{(\rho)}(x) := \phi(x/\rho), 
    \quad x\in \mathbb R^n,\quad  \rho\in(0,R),
  \end{displaymath} 
  where $\phi(x)\in C_0^\infty(\mathbb R^n)$ is a nonnegative and radially symmetric function such that 
  \begin{displaymath}
    \phi(x) = 
    \begin{cases}
      1, & \text{if } x\in B_{1/2},\\
      0, & \text{if } x\in\mathbb R^n\setminus B_1.
    \end{cases}
  \end{displaymath}
  It follows that  
  \begin{displaymath}
    \rho\|\nabla \phi^{(\rho)}\|_{L^\infty(\mathbb R^n)} 
    + \rho^2\|\Delta \phi^{(\rho)}\|_{L^\infty(\mathbb R^n)} 
    \leq C_\phi := 
    \|\nabla \phi\|_{L^\infty(\mathbb R^n)} 
    + \|\Delta \phi\|_{L^\infty(\mathbb R^n)}  
  \end{displaymath}
  for each $\rho\in(0,R)$.
  Testing $u = w_t - \Delta w + w$ by $v\phi^{(\rho)}$, 
  we compute upon integration by parts,  
  \begin{equation}
    \label{eq: uvpsi}
    \begin{aligned}
      \int_{B_{\rho/2}} uv &\leq \int_\Omega uv\phi^{(\rho)} 
      = \int_\Omega w_tv\phi^{(\rho)} - \int_\Omega \Delta w v\phi^{(\rho)} + \int_\Omega wv\phi^{(\rho)}\\
      &= \int_\Omega (wv)_t\phi^{(\rho)} 
      - \int_\Omega wv_t\phi^{(\rho)}
      + \int_\Omega w(-\Delta(v\phi^{(\rho)}) + v\phi^{(\rho)})\\ 
      &= \int_\Omega (wv)_t\phi^{(\rho)} 
      + \int_\Omega (-\Delta v + v - v_t)w\phi^{(\rho)} 
      - \int_\Omega w(2\nabla v\cdot\nabla\phi^{(\rho)} + v\Delta\phi^{(\rho)})
    \end{aligned}
  \end{equation}
  for all $t\in(0,T_{\max})$.
  Recalling the notation $w = -\Delta v + v - f$ specified in \eqref{sym: f},
  we estimate 
  \begin{equation}
    \begin{aligned}
      \int_\Omega (-\Delta v + v - v_t)w\phi^{(\rho)}  
      &= \int_\Omega (-\Delta v + v +f)(- \Delta v + v -f)\phi^{(\rho)} \\
      &= \int_\Omega (\Delta v - v)^2\phi^{(\rho)} 
       - \int_\Omega f^2\phi^{(\rho)}\\ 
      &\leq 2\|\Delta v\|_{L^2(B_\rho)}^2 
      + 2\|v\|_{L^2(B_\rho)}^2
    \end{aligned}
  \end{equation}
  for all $t\in(0,T_{\max})$.
  Using the pointwise information from \eqref{eq: pointwise estimates}, 
  we estimate 
  \begin{equation}
    \begin{aligned}
      &- \int_\Omega w(2\nabla v\cdot\nabla\phi^{(\rho)} + v\Delta\phi^{(\rho)}) \\
      &\leq 2B^2C_\phi \int_{B_\rho\setminus B_{\rho/2}}
      |x|^{-\kappa}(|x|^{1-\kappa}|x|^{-1} + |x|^{2-\kappa}|x|^{-2})\\
      &= 4B^2C_\phi\omega_n\int_{\rho/2}^\rho r^{n-1-2\kappa}\dd r 
      \leq 2^{2\kappa-n+2}\omega_nB^2C_\phi\rho^{n-2\kappa}
    \end{aligned}
  \end{equation}
  for all $t\in(0,T_{\max})$,
  and 
  \begin{equation}
    \label{eq: uvannulus}
    \begin{aligned}
    \int_{\Omega \backslash B_{\rho/2}} uv 
    &\leq \sup _{r \in\left(\rho/2, R\right)} v(r) \cdot \int_{\Omega \setminus B_{\rho/2}} u \\
    &\leq B \sup_{r \in\left(\rho/2, R\right)} r^{2-\kappa} \cdot \int_{\Omega} u 
    = 2^{\kappa-2} Bm \rho^{2-\kappa} 
    \leq 2^{\kappa-2}BmR^{\kappa + 2 - n}\rho^{n-2\kappa}
    \end{aligned}
    \end{equation}
  for all $t\in(0,T_{\max})$.
  Invoking \eqref{eq: v2<deltav}, we have by Young inequality 
  \begin{equation}
    \label{eq: int wv}
    \begin{aligned}
    \int_{\Omega}wv \phi^{\rho} 
    &= \int_{\Omega}(-\Delta v + v - f)v\phi^{\rho} \\
    &\leq 
    \|\Delta v - v\|_{L^2(\Omega)}\|v\|_{L^2(\Omega)}
    + \|f\|_{L^2(\Omega)}\|v\|_{L^2(\Omega)}\\ 
    &\leq C M^{4/(n+4)}\|\Delta v - v\|_{L^2(\Omega)}^{2(n+2)/(n+4)} 
    + C M^{4/(n+4)}\|\Delta v - v\|_{L^2(\Omega)}^{n/(n+4)}\|f\|_{L^2(\Omega)}\\
    &\leq C M^{4/(n+4)}\|\Delta v - v\|_{L^2(\Omega)}^{2(n+2)/(n+4)} 
    + C M^{4/(n+4)}\|f\|_{L^2(\Omega)}^{2(n+2)/(n+4)}
    \end{aligned}
  \end{equation}
  for all $t\in(0,T_{\max})$.
  Summing \eqref{eq: uvpsi}--\eqref{eq: uvannulus} and integrating over $(0,s)\subset(0,T_{\max})$ yields
  \begin{displaymath}
    \begin{aligned}
      \int_0^s\int_{\Omega} uv\dd x \dd t 
      &\leq \int_{\Omega} wv\phi^{\rho} - \int_{\Omega}w_0v_0\phi^{\rho}
      + 2\int_0^s\|\Delta v\|_{L^2(B_\rho)}^2 \dd t
      + 2\int_0^s\|v\|_{L^2(B_\rho)}^2 \dd t\\
      &\quad + 2^{2\kappa-n+2}\omega_nB^2C_\phi\rho^{n-2\kappa}s
      + 2^{\kappa-2}BmR^{\kappa + 2 - n}\rho^{n-2\kappa}s
    \end{aligned}
  \end{displaymath}
  which reduces to \eqref{eq: intuvleq} by inserting \eqref{eq: int wv}.
\end{proof}

\subsection{Spatial-temporal estimates of \texorpdfstring{$\Delta v$}{Delta v}}

This subsection is devoted to estimates of the quantity $\int_0^s\|\Delta v\|_{L^2(B_\rho)}^2 \dd s$ through a combination of terms from both the dissipation rate $\mathcal{D}$ and the energy functional $\mathcal{F}$.

We begin with an employment of the Poho\v{z}aev-type test function $|x|^{n-2}x\cdot\nabla v\mathds{1}_{B_\eta}$ with $\eta\in(0,R)$,
initially introduced in the seminal works of Winkler~\cite{Winkler2010b, Winkler2010,Winkler2013}. 
The first crucial aspect of this procedure lies in exploiting 
the structural information \eqref{sym: g}.

\begin{lemma}
  Let $(u_0,v_0,w_0)\in\mathcal{B}(\tilde{m}, A)$.
  Denote $\alpha = 2(n-1)$. Then 
  \begin{equation}
    \label{eq: 1/2y2etan-1}
    \begin{aligned}
      &\quad \frac{1}{2}y^2(\eta)\eta^{n-1}
      + w_r(\eta)v_r(\eta)\eta^{n-1}
      - \frac{n-1}{\eta^{n-1}}\int_0^\eta y^2r^{\alpha-1}\dd r \\
      &\leq  \frac{1}{\eta^{n-1}}\int_0^\eta (w v_r)_tr^\alpha\dd r 
      + \frac{\sqrt{m}}{\omega_n}\|g\|_{L^2(B_\eta)}
      + 2\omega_n^{-1}\|\nabla f\|_{L^2(B_\eta)}\|\Delta v\|_{L^2(B_\eta)} \\
      &\quad + \omega_n^{-1}\|f\|_{L^2(B_\eta)}\|\nabla v\|_{L^2(B_\eta)}
      + \omega_n^{-1}\|\nabla f\|_{L^2(B_\eta)}\|v\|_{L^2(B_\eta)}
      + \frac12 v^2(\eta)\eta^{n-1}\\
      &\quad + \frac{n-1}{\eta^{n-1}}\int_0^\eta f^2r^{\alpha-1}\dd r 
      + \frac{\alpha}{\eta^{n-1}}\int_0^\eta ur^{\alpha-1}\dd r
      - \frac12 f^2(\eta)\eta^{n-1}
      -u(\eta)\eta^{n-1}
    \end{aligned}
  \end{equation}
  for all $\eta\in(0,R)$ and $t\in(0,T_{\max})$.
  Here, abbreviate   
  \begin{displaymath}
    y := \Delta v 
    = v_{rr} + \frac{n-1}{r}v_r 
    = r^{1-n}(v_rr^{n-1})_r, 
    \quad (r,t)\in(0, R)\times(0,T_{\max})
  \end{displaymath}
  for brevity.
\end{lemma}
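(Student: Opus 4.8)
The plan is to test the first equation of \eqref{sys: ks isp pp/ep/e}, rewritten as $u = w_t - \Delta w + w$, against the Poho\v{z}aev-type function $|x|^{n-2}x\cdot\nabla v$ over $B_\eta$, which in radial form amounts to starting from the identity
\begin{equation*}
  \int_0^\eta u\,v_r\,r^{2n-2}\dd r
  = \int_0^\eta w_t\,v_r\,r^{2n-2}\dd r
  - \int_0^\eta (r^{n-1}w_r)_r\,(r^{n-1}v_r)\dd r
  + \int_0^\eta w\,v_r\,r^{2n-2}\dd r ,
\end{equation*}
and then transferring the operator $\partial_t - r^{1-n}(\partial_r r^{n-1})_r$ off $w$. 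First I would integrate the middle term by parts once in $r$: since $(r^{n-1}v_r)_r = r^{n-1}y$, this produces the boundary term $\eta^{2n-2}w_r(\eta)v_r(\eta)$ together with $\int_0^\eta w_r\,y\,r^{2n-2}\dd r$. For the first term I would write $\int_0^\eta w_t v_r r^{2n-2}\dd r = \int_0^\eta (wv_r)_t r^{2n-2}\dd r - \int_0^\eta w v_{rt}r^{2n-2}\dd r$ and replace $v_{rt} = -f_r$, using $f = -v_t$ from \eqref{sym: f}.

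Next I would substitute $w = -y + v - f$ (again \eqref{sym: f}) in every remaining occurrence of $w$ or $w_r$. This turns three of the resulting bulk integrals into exact $r$-derivatives, $\int_0^\eta f f_r r^{2n-2}\dd r$, $\int_0^\eta y\,y_r\,r^{2n-2}\dd r$ and $\int_0^\eta v v_r r^{2n-2}\dd r$; integrating each by parts delivers precisely the boundary contributions $\pm\tfrac12 f^2(\eta)\eta^{2n-2}$, $\pm\tfrac12 y^2(\eta)\eta^{2n-2}$, $\tfrac12 v^2(\eta)\eta^{2n-2}$ and the weighted bulk terms $(n-1)\int_0^\eta(\cdot)^2 r^{2n-3}\dd r$ with $2n-3 = \alpha-1$. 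The two integrals $\pm\int_0^\eta y\,v_r\,r^{2n-2}\dd r$ cancel. For the term carrying $u$ I would invoke the structural identity $\sqrt u\,g = u_r - u v_r$ coming from \eqref{sym: g}, so that $\int_0^\eta u v_r r^{2n-2}\dd r = \int_0^\eta u_r r^{2n-2}\dd r - \int_0^\eta \sqrt u\,g\,r^{2n-2}\dd r$, and one more integration by parts on $\int_0^\eta u_r r^{2n-2}\dd r$ gives $u(\eta)\eta^{2n-2} - \alpha\int_0^\eta u\,r^{\alpha-1}\dd r$. Dividing the whole identity by $\eta^{n-1}$ and moving $\tfrac12 y^2(\eta)\eta^{n-1}$ and $\tfrac{n-1}{\eta^{n-1}}\int_0^\eta y^2 r^{\alpha-1}\dd r$ to the left produces exactly the left-hand side of \eqref{eq: 1/2y2etan-1}.

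It then remains to discard the nonpositive term $-\tfrac{n-1}{\eta^{n-1}}\int_0^\eta v^2 r^{\alpha-1}\dd r$ and to bound the leftover cross terms $-\tfrac{2}{\eta^{n-1}}\int_0^\eta y f_r r^{2n-2}\dd r$, $\tfrac{1}{\eta^{n-1}}\int_0^\eta v f_r r^{2n-2}\dd r$, $-\tfrac{1}{\eta^{n-1}}\int_0^\eta f v_r r^{2n-2}\dd r$ and $\tfrac{1}{\eta^{n-1}}\int_0^\eta \sqrt u\,g\,r^{2n-2}\dd r$. Each of these I would estimate by the Cauchy--Schwarz inequality in the measure $r^{n-1}\dd r$, absorbing the surplus powers of $r$ on $(0,\eta)$ into powers of $\eta$ and using that for radial functions $|\nabla v| = |v_r|$, $|\nabla f| = |f_r|$, $\Delta v = y$; for the $g$-term one additionally uses $\int_{B_\eta}u \le \int_\Omega u = m$ from \eqref{eq: mass identity}, which is exactly what yields the factor $\tfrac{\sqrt m}{\omega_n}$. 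The boundary terms at $r = 0$ all vanish thanks to the weight $r^{2n-2}$ and the $C^{2,1}$-regularity of the solution. The step requiring the most care is the bookkeeping in the last two paragraphs: the term $\tfrac{1}{\eta^{n-1}}\int_0^\eta(wv_r)_t r^\alpha\dd r$ must be left untouched as a genuine time derivative, since it is the quantity that will later telescope under $\int_0^s(\cdot)\dd t$, while \emph{every} other appearance of $w$ or $w_t$ has to be eliminated in favour of $v$-, $f$- and $u$-quantities that are controllable by $\mathcal F$, $\mathcal D$ and the a priori bounds of Section~\ref{section preliminary}; the choice of the Poho\v{z}aev test function $r^{n-1}v_r$ is precisely what makes this elimination possible without losing regularity on $w$.
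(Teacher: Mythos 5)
Your proposal is correct and follows essentially the same route as the paper: testing $u = w_t-\Delta w + w$ against the Poho\v{z}aev function $|x|^{n-2}x\cdot\nabla v$ on $B_\eta$, pulling out the total time derivative $(wv_r)_t$, substituting $w=-y+v-f$ and $uv_r=u_r-g\sqrt u$ so that the remaining bulk integrals become exact $r$-derivatives or controllable cross terms, and then estimating those cross terms by Cauchy--Schwarz in the measure $r^{n-1}\dd r$ with the surplus weight absorbed into $\eta^{n-1}$. The only cosmetic difference is that you cancel the two $\int_0^\eta y\,v_r\,r^{\alpha}\dd r$ integrals directly, whereas the paper first converts each into the boundary term $-\tfrac12 v_r^2(\eta)\eta^{\alpha}$ and cancels afterwards; the outcome is identical.
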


\begin{proof}
  Let $\eta\in(0,R)$ and $t\in(0,T_{\max})$.
  Multiplying $w_t - u + w = \Delta w$ by $|x|^{n-2}x\cdot\nabla v$ 
  and integrating over $B_\eta\subset\Omega$,
  we get 
  \begin{align}
    \label{eq: test by rn-1vr}
    \int_0^\eta w_t v_rr^\alpha\dd r 
    - \int_0^\eta uv_rr^\alpha\dd r 
    + \int_0^\eta wv_rr^\alpha\dd r
    &=  \int_0^\eta (w_rr^{n-1})_rv_rr^{n-1}\dd r.
  \end{align}
  Using the identity $w_tv_r = (wv_r)_t - wv_{rt}$ and the notation $f = -v_t$, we have 
  \begin{equation}
    \label{eq: wtvr}
    \begin{aligned}
      \int_0^\eta w_t v_rr^\alpha\dd r  
      &= \int_0^\eta(wv_r)_tr^\alpha\dd r + \int_0^\eta wf_rr^\alpha\dd r. 
    \end{aligned}
  \end{equation}
  Thank to $uv_r = u_r - g\sqrt{u}$ as given by \eqref{sym: g}, 
  we compute the second term on the left of \eqref{eq: test by rn-1vr}
  \begin{equation}
    \begin{aligned} 
    - \int_0^\eta uv_rr^\alpha\dd r 
    &= -\int_0^\eta (u_r-g\sqrt{u})r^\alpha \dd r \\ 
    &= -u(\eta)\eta^\alpha 
    + \alpha\int_0^\eta ur^{\alpha-1}\dd r 
    + \int_0^\eta g\sqrt{u}r^\alpha \dd r.
    \end{aligned}
  \end{equation}
  Integration by parts yields 
  \begin{equation} 
  \begin{aligned} 
    \int_0^\eta (w_rr^{n-1})_rv_rr^{n-1}\dd r
    &= w_r(\eta)v_r(\eta)\eta^\alpha 
    - \int_0^\eta w_rr^{n-1}(v_rr^{n-1})_r\dd r \\
    &= w_r(\eta)v_r(\eta)\eta^\alpha 
    - \int_0^\eta w_ryr^\alpha\dd r.
  \end{aligned} 
\end{equation}
  Using $w = - y + v -f$,
  we evaluate the last term on the left of \eqref{eq: test by rn-1vr}
  \begin{equation} 
    \label{eq: wvrr2n-2}
  \begin{aligned}
    \int_0^\eta wv_rr^\alpha\dd r 
    &= \int_0^\eta (-y + v -f)v_rr^\alpha\dd r \\
    &= - \int_0^\eta (v_rr^{n-1})_rv_rr^{n-1}\dd r 
    + \frac{1}{2}\int_0^\eta(v^2)_rr^\alpha\dd r
    - \int_0^\eta fv_rr^\alpha\dd r \\
    &= - \frac12v_r^2 (\eta)\eta^\alpha 
    + \frac12 v^2(\eta)\eta^\alpha 
    - (n-1)\int_0^\eta v^2r^{\alpha-1}\dd r 
    - \int_0^\eta fv_rr^\alpha\dd r 
  \end{aligned} 
  \end{equation}
  and the last term on the right of \eqref{eq: wtvr}
  \begin{equation}
    \begin{aligned}
      \int_0^\eta wf_rr^\alpha\dd r 
      &= - \int_0^\eta yf_rr^\alpha\dd r 
      + \int_0^\eta vf_rr^\alpha\dd r \\
      &\quad - \frac12 f^2(\eta)\eta^\alpha 
      + (n-1)\int_0^\eta f^2r^{\alpha-1}\dd r.
    \end{aligned}
  \end{equation}
  Using $w_r = - y_r + v_r - f_r$, we calculate 
  \begin{equation}
    \label{eq: wryr2n-2}
  \begin{aligned}
     - \int_0^\eta w_ryr^\alpha\dd r 
    &= - \int_0^\eta (- y_r + v_r - f_r)yr^\alpha\dd r \\
    &=  \int_0^\eta  yy_rr^\alpha\dd r 
    - \int_0^\eta (v_rr^{n-1})_r v_rr^{n-1}\dd r
    + \int_0^\eta  f_ryr^\alpha\dd r \\
    &= \frac{1}{2}y^2(\eta)\eta^\alpha 
    - (n-1)\int_0^\eta y^2r^{\alpha-1}\dd r 
    - \frac12v_r^2 (\eta)\eta^\alpha + \int_0^\eta f_ryr^\alpha\dd r.
  \end{aligned} 
  \end{equation}
  Collecting \eqref{eq: test by rn-1vr}--\eqref{eq: wryr2n-2} and  
  multiplying by $\eta^{1-n}$, 
  we get 
  \begin{align*}
      &\quad \frac{1}{2}y^2(\eta)\eta^{n-1}
      + w_r(\eta)v_r(\eta)\eta^{n-1}
      - \frac{n-1}{\eta^{n-1}}\int_0^\eta y^2r^{\alpha-1}\dd r \\
      &= \frac{1}{\eta^{n-1}}\int_0^\eta (w v_r)_tr^\alpha\dd r 
      + \frac{1}{\eta^{n-1}} \int_0^\eta g\sqrt{u}r^\alpha \dd r
      - \frac{2}{\eta^{n-1}}\int_0^\eta f_ryr^\alpha\dd r\\
      &\quad 
      + \frac{1}{\eta^{n-1}}\int_0^\eta (vf_r-fv_r)r^\alpha\dd r
      + \frac{n-1}{\eta^{n-1}}\int_0^\eta (f^2-v^2)r^{\alpha-1}\dd r \\
      &\quad -u(\eta)\eta^{n-1} 
      + \frac{\alpha}{\eta^{n-1}}\int_0^\eta ur^{\alpha-1}\dd r + \frac12 v^2(\eta)\eta^{n-1} 
      - \frac12 f^2(\eta)\eta^{n-1}    \\
      &\leq  \frac{1}{\eta^{n-1}}\int_0^\eta (w v_r)_tr^\alpha\dd r 
      + \frac{\sqrt{m}}{\omega_n}\|g\|_{L^2(B_\eta)}
      + 2\omega_n^{-1}\|\nabla f\|_{L^2(B_\eta)}\|\Delta v\|_{L^2(B_\eta)} \\
      &\quad + \omega_n^{-1}\|f\|_{L^2(B_\eta)}\|\nabla v\|_{L^2(B_\eta)}
      + \omega_n^{-1}\|\nabla f\|_{L^2(B_\eta)}\|v\|_{L^2(B_\eta)}
      + \frac12 v^2(\eta)\eta^{n-1}\\
      &\quad + \frac{n-1}{\eta^{n-1}}\int_0^\eta f^2r^{\alpha-1}\dd r 
      + \frac{\alpha}{\eta^{n-1}}\int_0^\eta ur^{\alpha-1}\dd r
      - \frac12 f^2(\eta)\eta^{n-1}
      -u(\eta)\eta^{n-1},
    \end{align*}
  where the last inequality follows from H\"older inequality.
  This is exactly \eqref{eq: 1/2y2etan-1}.
\end{proof}

A local-in-space integration of \eqref{eq: 1/2y2etan-1} over $B_\rho$ for $\rho\in(0,R)$ enables us to effectively establish an upper bound for the interior estimate $\|\Delta v\|_{L^2(B_\rho)}$ through a combination of dissipation rate quantities and a problematic term involving $w_t$, as the second advantage of the Poho\v{z}aev-type test procedure.

\begin{lemma}
  \label{le: 1/8+1/4}
  Let $(u_0,v_0,w_0)\in\mathcal{B}(\tilde{m}, A)$. 
  Denote $\alpha = 2(n-1)$. 
  Then there exists a constant $C > 0$ such that
  \begin{equation} 
    \label{eq: 1/8+1/4}
  \begin{aligned}
    &\quad \frac{1}{8} \|\Delta v\|_{L^2(B_\rho)}^2 
    + \frac{1}{4}\|\nabla v\|_{L^2(B_\rho)}^2 \\
    &\leq 49\rho^2\|\nabla f\|_{L^2(B_\rho)}^2 
    + C B^{4/(n+2)} \|f\|_{W^{1,2}(\Omega)}^{2n/(n+2)}
    + \sqrt{m}\rho\|g\|_{L^2(B_\rho)}\\
    &\quad + \omega_n\int_0^\rho \frac{1}{\eta^{n-1}}\int_0^\eta (w v_r)_tr^\alpha\dd r \dd\eta
    + 2m + C(B^2 + M^2)\rho^{n-2\kappa}
  \end{aligned} 
  \end{equation}
  holds for all $\rho\in (0,R)$ and $t\in(0,T_{\max})$.
\end{lemma}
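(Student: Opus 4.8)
The plan is to integrate the pointwise inequality \eqref{eq: 1/2y2etan-1} in the radial variable over $\eta\in(0,\rho)$ and then bound each resulting term using the interpolation estimates of Lemma~\ref{le: GN}, the pointwise bounds \eqref{eq: pointwise estimates}--\eqref{eq: first-order estimates}, and Young's inequality. The key structural point is that after multiplying \eqref{eq: 1/2y2etan-1} by $\omega_n\eta^{n-1}$ (so that the $\eta$-integral of the $\eta^{n-1}$-weighted terms reconstitutes honest $L^2(B_\rho)$ norms) we obtain on the left roughly $\tfrac12\int_0^\rho y^2\eta^{n-1}\,\mathrm d\eta\cdot\omega_n$ — i.e.\ $\tfrac12\|\Delta v\|_{L^2(B_\rho)}^2$ — plus the boundary-type term $\omega_n\int_0^\rho w_r v_r\eta^{n-1}\,\mathrm d\eta$, which after writing $w_r=-y_r+v_r-f_r$ and integrating by parts produces $+\tfrac12\|\Delta v\|_{L^2(B_\rho)}^2$, $+\|\nabla v\|_{L^2(B_\rho)}^2$ and cross-terms with $\nabla f$; these must be absorbed. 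The $-\tfrac{n-1}{\eta^{n-1}}\int_0^\eta y^2r^{\alpha-1}$ term, after the $\eta$-integration, contributes a further negative multiple of $\|\Delta v\|_{L^2(B_\rho)}^2$ (by Fubini it equals $-(n-1)\omega_n\int_0^\rho y^2(r)r^{\alpha-1}\int_r^\rho \eta^{-1}\,\mathrm d\eta\cdot\ldots$, which one bounds below crudely), and likewise for the corresponding $f^2$ and $u$ terms — these yield the harmless $+2m$ and are handled by discarding favourable signs.

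First I would carry out the $\eta$-integration term by term. For the terms already carrying the weight $\eta^{1-n}\int_0^\eta(\cdot)r^\alpha$, Fubini gives $\int_0^\rho\eta^{1-n}\int_0^\eta h(r)r^\alpha\,\mathrm d r\,\mathrm d\eta=\int_0^\rho h(r)r^\alpha\int_r^\rho\eta^{1-n}\,\mathrm d\eta\,\mathrm d r$; since $n\geq 3$ the inner integral is $\le \tfrac{1}{n-2}r^{2-n}$, so such a term is $\le \tfrac{1}{n-2}\int_0^\rho h(r)r^{\alpha+2-n}\,\mathrm d r=\tfrac{1}{n-2}\int_0^\rho h(r)r^{n-1}\,\mathrm d r$, i.e.\ controlled by a genuine ball integral. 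Applied to $g\sqrt u r^\alpha$, $f_r y r^\alpha$, $(vf_r-fv_r)r^\alpha$ and the $f^2,u$ lower-order pieces, together with Cauchy--Schwarz on $B_\rho$, this produces exactly the terms $\sqrt m\,\rho\|g\|_{L^2(B_\rho)}$, $\|\nabla f\|_{L^2(B_\rho)}\|\Delta v\|_{L^2(B_\rho)}$ (absorbed by Young into a small multiple of $\|\Delta v\|_{L^2(B_\rho)}^2$ plus $\rho^2\|\nabla f\|_{L^2(B_\rho)}^2$, whence the constant $49$), $\|\nabla f\|_{L^2}\|v\|_{L^2}+\|f\|_{L^2}\|\nabla v\|_{L^2}$, and $2m$. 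The $(wv_r)_t$ term is simply carried along verbatim as $\omega_n\int_0^\rho\eta^{1-n}\int_0^\eta(wv_r)_t r^\alpha\,\mathrm d r\,\mathrm d\eta$ — this is the ``problematic $w_t$'' contribution that later lemmas will integrate in time.

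Next I would deal with the boundary values $y^2(\eta)\eta^{n-1}$, $v^2(\eta)\eta^{n-1}$, $f^2(\eta)\eta^{n-1}$, $u(\eta)\eta^{n-1}$, $v_r^2(\eta)\eta^{n-1}$ and $w_r(\eta)v_r(\eta)\eta^{n-1}$ appearing after multiplying by $\omega_n\eta^{n-1}$ and integrating — but in fact the cleaner route is to integrate \eqref{eq: 1/2y2etan-1} \emph{as written} (no extra weight) and observe that $\omega_n\int_0^\rho w_r(\eta)v_r(\eta)\eta^{n-1}\,\mathrm d\eta$ is precisely $\int_{B_\rho}\nabla w\cdot\nabla v$, which via $\nabla w=-\nabla y+\nabla v-\nabla f$ and integration by parts on $B_\rho$ equals $-\int_{B_\rho}y^2+\int_{B_\rho}|\nabla v|^2-\int_{B_\rho}\nabla y\cdot\nabla f+(\text{boundary at }r=\rho)$; combined with $\tfrac12\omega_n\int_0^\rho y^2\eta^{n-1}$ this is where the factor $\tfrac18$ in front of $\|\Delta v\|_{L^2(B_\rho)}^2$ comes from after Young absorption. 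The lower-order pieces $\|v\|_{L^2}^2$ and $\|f\|_{W^{1,2}}^2$ are then handled: $\|v\|_{L^2(B_\rho)}^2\le\tfrac12\|\nabla v\|_{L^2(B_\rho)}^2+C(M^2+B^2)\rho^{n-2\kappa}$ by \eqref{eq:v2innergn} (this accounts for the surviving $\tfrac14\|\nabla v\|_{L^2(B_\rho)}^2$ and the $\rho^{n-2\kappa}$ remainder), while $\|f\|_{L^2(\Omega)}^2\le CB^{4/(n+2)}\|f\|_{W^{1,2}(\Omega)}^{2n/(n+2)}$ is \eqref{eq: f gradf gn}, explaining that term verbatim. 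The $\rho^{n-2\kappa}$ contributions from the annular integral of $w\nabla\phi$-type remainders (implicitly needed to pass from $B_{\rho/2}$ estimates to $B_\rho$, analogous to \eqref{eq: nablavouterregion}) are likewise absorbed into $C(B^2+M^2)\rho^{n-2\kappa}$.

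The main obstacle, and the place requiring genuine care, is the bookkeeping of the several competing $\|\Delta v\|_{L^2(B_\rho)}^2$ contributions — the favourable $\tfrac12$ from the $y^2(\eta)\eta^{n-1}$ main term, the unfavourable $+1$ hidden in the $\nabla w\cdot\nabla v$ boundary term after integration by parts, the further favourable $-(n-1)\cdot(\ldots)$ from the explicit $-\tfrac{n-1}{\eta^{n-1}}\int_0^\eta y^2 r^{\alpha-1}$ term, and the Young-absorbed $\|\nabla f\|_{L^2(B_\rho)}\|\Delta v\|_{L^2(B_\rho)}$ cross terms — one must choose the Young parameters so that the net coefficient of $\|\Delta v\|_{L^2(B_\rho)}^2$ on the left is positive (here $\tfrac18$) while the $\|\nabla f\|_{L^2(B_\rho)}^2$ debt stays at the manageable $49\rho^2\|\nabla f\|_{L^2(B_\rho)}^2$. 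A secondary subtlety is that the weights $r^{2-n}$ from the Fubini step degenerate at $r=0$, so one needs $n-2\kappa<0$, i.e.\ $\kappa>n-2$ (which is exactly the standing hypothesis in \eqref{eq: pointwise estimates}), to keep all remainder integrals of $r^{n-1-2\kappa}$-type finite and of the stated order $\rho^{n-2\kappa}$; no boundary term at $r=\rho$ survives with a bad sign because $w_r(\rho)v_r(\rho)\rho^{n-1}$, $v_r^2(\rho)\rho^{n-1}$ etc.\ are either already present on the right of \eqref{eq: 1/2y2etan-1} or controlled by the pointwise bounds \eqref{eq: pointwise estimates} evaluated at $r=\rho\in(0,R)$, contributing once more to $C(B^2+M^2)\rho^{n-2\kappa}$.
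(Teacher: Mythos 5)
Your overall route is the same as the paper's: integrate \eqref{eq: 1/2y2etan-1} over $\eta\in(0,\rho)$, recognize $\omega_n\int_0^\rho w_r(\eta)v_r(\eta)\eta^{n-1}\dd\eta=\int_{B_\rho}\nabla w\cdot\nabla v$, rewrite that term via $w=-\Delta v+v-f$ and integration by parts, run Fubini on the doubly weighted $y^2$, $f^2$, $u$ terms, and close with Young, \eqref{eq:v2innergn} and \eqref{eq: f gradf gn}. However, the step you yourself single out as the crux --- the bookkeeping of the competing $\|\Delta v\|_{L^2(B_\rho)}^2$ contributions --- is carried out with the signs reversed, and as written it would fail. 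Integration by parts gives
\begin{align*}
\int_{B_\rho}\nabla w\cdot\nabla v
&=\omega_n wv_r\rho^{n-1}-\int_{B_\rho}w\Delta v\\
&=\omega_n(w-v)v_r\rho^{n-1}+\|\Delta v\|_{L^2(B_\rho)}^2+\|\nabla v\|_{L^2(B_\rho)}^2+\int_{B_\rho}f\Delta v,
\end{align*}
so this term contributes $+\|\Delta v\|_{L^2(B_\rho)}^2$, which is \emph{favourable} (it sits on the left of the integrated inequality), not ``$-\int_{B_\rho}y^2$'' as you assert; moreover the cross term is $\int_{B_\rho}f\Delta v$ (equivalently $-\int_{B_\rho}\nabla f\cdot\nabla v$ plus a boundary term), not $-\int_{B_\rho}\nabla y\cdot\nabla f$. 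Conversely, the Fubini term $-\omega_n\int_0^\rho\frac{n-1}{\eta^{n-1}}\int_0^\eta y^2r^{\alpha-1}\dd r\dd\eta$ is the \emph{unfavourable} one: it is only bounded below by $-\frac{n-1}{n-2}\|\Delta v\|_{L^2(B_\rho)}^2$. You label these two the other way around.

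The correct ledger for the coefficient of $\|\Delta v\|_{L^2(B_\rho)}^2$ is therefore $\frac12+1-\frac{n-1}{n-2}-\frac{1}{24}$, the $\frac{1}{24}$ coming from the two Young absorptions of $\int_{B_\rho}f\Delta v$ and of $2\rho\|\nabla f\|_{L^2(B_\rho)}\|\Delta v\|_{L^2(B_\rho)}$ (the latter producing $48\rho^2\|\nabla f\|^2$, hence the $49$). This is $\geq\frac18$ precisely because $n\geq5$: at $n=5$ it equals $\frac18$ exactly, and at $n=4$ it is $-\frac{1}{24}<0$. Your reversed version, roughly $\frac12-1+\frac{n-1}{n-2}$, happens to be positive for every $n\geq3$, which should have been a warning: the dimensional restriction $n\geq5$ enters this lemma exactly here and nowhere else. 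The remaining ingredients of your sketch --- Fubini with $\int_r^\rho\eta^{1-n}\dd\eta\leq r^{2-n}/(n-2)$, H\"older for the $g\sqrt u$ term, \eqref{eq:v2innergn} for $\|v\|_{L^2}^2$, \eqref{eq: f gradf gn} for $\|f\|_{L^2}^2$, and the pointwise bounds \eqref{eq: pointwise estimates} for the boundary term $\omega_n(w-v)v_r\rho^{n-1}$ --- are all correct and match the paper; only the mention of ``$w\nabla\phi$-type remainders'' is spurious here, as the cut-off construction belongs to Lemma~\ref{le: uvL1smallball}, not to this one.
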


\begin{proof}
  Let $\rho\in (0,R)$ and $t\in(0,T_{\max})$.
  Integrating \eqref{eq: 1/2y2etan-1} over $(0,\rho)\subset(0,R)$, we have 
  \begin{equation}
    \label{eq: 1/2inty2etan-1}
    \begin{aligned}
      &\quad \frac12\|\Delta v\|_{L^2(B_\rho)}^2 
      + \int_{B_\rho}\nabla w\nabla v\dd x 
      - \omega_n\int_0^\rho\frac{n-1}{\eta^{n-1}}\int_0^\eta y^2r^{\alpha-1}\dd r\dd\eta\\
      &\leq 
      \omega_n\int_0^\rho \frac{1}{\eta^{n-1}}\int_0^\eta (w v_r)_tr^\alpha\dd r \dd\eta
      + 2\rho\|\nabla f\|_{L^2(B_\rho)}\|\Delta v\|_{L^2(B_\rho)} 
      + \rho\sqrt{m}\|g\|_{L^2(B_\rho)}\\
      &\quad 
      + \rho \|f\|_{L^2(B_\rho)}\|\nabla v\|_{L^2(B_\rho)}
      + \rho \|v\|_{L^2(B_\rho)}\|\nabla f\|_{L^2(B_\rho)}
      + \frac12\|v\|_{L^2(B_\rho)}^2
      - \frac12\|f\|_{L^2(B_\rho)}^2 \\
      &\quad - \int_{B_\rho} u 
      + \omega_n\int_0^\rho\frac{n-1}{\eta^{n-1}}\int_0^\eta (f^2+2u)r^{\alpha-1}\dd r\dd\eta.
    \end{aligned}
  \end{equation}
  We calculate the second term on the left of \eqref{eq: 1/2inty2etan-1} by integration by parts, 
  the identity $w = -\Delta v + v -f$ from \eqref{sym: f} and Young inequality 
  \begin{equation} 
    \label{eq: wrvretan-1}
  \begin{aligned} 
    &\quad \int_{B_\rho}\nabla w\nabla v
    = \omega_nwv_r\rho^{n-1} 
    - \int_{B_\rho} w\Delta v \\
    &= \omega_nwv_r\rho^{n-1}
    + \|\Delta v\|^2_{L^2(B_\rho)} 
    - \int_{B_\rho} v\Delta v 
    + \int_{B_\rho} f\Delta v \\
    &\geq \omega_n(w-v)v_r\rho^{n-1}
    + \|\Delta v\|^2_{L^2(B_\rho)} 
    + \|\nabla v\|^2_{L^2(B_\rho)} 
    - \frac{1}{48}\|\Delta v\|_{L^2(B_\rho)}^2 
    - 12\|f\|_{L^2(B_\rho)}^2 
  \end{aligned} 
\end{equation}
for all $\rho\in(0,R)$ and $t\in(0,T_{\max})$.
The pointwise restrictions \eqref{eq: pointwise estimates} imply 
  \begin{equation}
    \omega_n (w-v)v_r\rho^{n-1}
    \geq - \omega_nB^2(\rho^{2-\kappa} + \rho^{-\kappa})\rho^{1-\kappa}\rho^{n-1} 
    = - \omega_nB^2(\rho^{2} + 1)\rho^{n-2\kappa}
  \end{equation}
  for all $\rho\in(0,R)$ and $t\in(0,T_{\max})$.
  By Fubini theorem, we compute the last terms on the both sides of \eqref{eq: 1/2inty2etan-1}
  \begin{equation}
    \begin{aligned}
      &- \omega_n\int_0^\rho\frac{n-1}{\eta^{n-1}}\int_0^\eta y^2r^{\alpha-1}\dd r\dd\eta 
      = - (n-1)\omega_n\int_0^\rho\int_r^\rho\frac{\dd\eta}{\eta^{n-1}} y^2r^{\alpha-1}\dd r\\
      = &- \frac{(n-1)\omega_n}{n-2}\int_0^\rho y^2r^{n-1}\left(1 - \frac{r^{n-2}}{\rho^{n-2}}\right)\dd r
      \geq - \frac{n-1}{n-2}\|\Delta v\|^2_{L^2(B_\rho)}
    \end{aligned}
  \end{equation}
  and 
  \begin{equation}
    \label{eq: f2+2u}
    \begin{aligned}
      \omega_n\int_0^\rho\frac{n-1}{\eta^{n-1}}\int_0^\eta (f^2+2u)r^{\alpha-1}\dd r\dd\eta 
      &\leq \frac{(n-1)\omega_n}{n-2}\int_0^\rho (f^2+2u)r^{n-1}\dd r \\
      &< \frac{3}{2}\|f\|_{L^2(B_\rho)}^2 
      + 3\int_{B_\rho}u.
    \end{aligned}
  \end{equation}
  Young inequality gives 
  \begin{equation}
    2\rho\|\nabla f\|_{L^2(B_\rho)}\|\Delta v\|_{L^2(B_\rho)}
    \leq \frac{1}{48} \|\Delta v\|_{L^2(B_\rho)}^2 
    + 48\rho^2\|\nabla f\|_{L^2(B_\rho)}^2,
  \end{equation}
  and 
  \begin{equation}
  \label{eq: nablavfyoung}
  \begin{aligned}
    &\quad \rho\|\nabla v\|_{L^2(B_\rho)}\|f\|_{L^2(B_\rho)} 
    + \rho\|v\|_{L^2(B_\rho)}\|\nabla f\|_{L^2(B_\rho)}\\
    &\leq \frac{1}{4}\|\nabla v\|_{L^2(B_\rho)}^2 
    + \rho^2\|f\|_{L^2(B_\rho)}^2 
    + \rho^2\|\nabla f\|_{L^2(B_\rho)}^2 
    + \frac{1}{4}\|v\|_{L^2(B_\rho)}^2.
  \end{aligned} 
  \end{equation}
  Substituting \eqref{eq: wrvretan-1}--\eqref{eq: nablavfyoung} into \eqref{eq: 1/2inty2etan-1} we get  
  \begin{equation} 
    \label{eq: 3/2-n-1/n-2}
  \begin{aligned}
    &\quad\left(\frac{3}{2} - \frac{1}{24} - \frac{n-1}{n-2}\right) \|\Delta v\|_{L^2(B_\rho)}^2 
    + \frac{3}{4}\|\nabla v\|_{L^2(B_\rho)}^2 \\
    &\leq 49\rho^2\|\nabla f\|_{L^2(B_\rho)}^2 
    + \sqrt{m}\rho\|g\|_{L^2(B_\rho)} 
    + \omega_n\int_0^\rho \frac{1}{\eta^{n-1}}\int_0^\eta (w v_r)_tr^\alpha\dd r \dd\eta\\
    &\quad + 2m 
    + \|v\|_{L^2(B_\rho)}^2 + (13+\rho^2)\|f\|_{L^2(B_\rho)}^2
    + \omega_nB^2(\rho^{2} + 1)\rho^{n-2\kappa}
  \end{aligned} 
  \end{equation}
  for all $\rho\in(0,R)$ and $t\in(0,T_{\max})$.
  Noting the fact 
  \begin{displaymath}
    \frac{3}{2} - \frac{1}{24} - \frac{n-1}{n-2} \geq \frac18, 
    \quad \text{as } n\geq5,
  \end{displaymath}
  we plug \eqref{eq:v2innergn} and \eqref{eq: f gradf gn} into \eqref{eq: 3/2-n-1/n-2},
  which becomes \eqref{eq: 1/8+1/4}.
\end{proof}

We shall eliminate the indeterminate sign term left in Lemma~\ref{le: 1/8+1/4} without treatment via a temporal integration of \eqref{eq: 1/8+1/4} over $(0,s)\in(0,T_{\max})$, which gives us the desired temporal-spatial $L^2_{\loc}$-estimate of  $\Delta v$.  

\begin{lemma}
  \label{le: deltavL2}
  Let $(u_0,v_0,w_0)\in\mathcal{B}(\tilde{m}, A)$.
  Then there exists a constant $C > 0$ such that
  \begin{equation} 
    \label{eq: 1/8deltavL2}
  \begin{aligned}
    &\quad \frac{1}{8} \int_0^s\|\Delta v\|_{L^2(B_\rho)}^2 \dd t
    + \frac{1}{4} \int_0^s \|\nabla v\|_{L^2(B_\rho)}^2 \dd t\\
    &\leq 49\rho^2\int_0^s\|\nabla f\|_{L^2(B_\rho)}^2 \dd t 
    +  CB^{4/(n+2)}\int_0^s \|f\|_{W^{1,2}(\Omega)}^{2n/(n+2)}\dd t\\
    &\quad + \rho\sqrt{m}\int_0^s\|g\|_{L^2(B_\rho)} \dd t
      + CM^{2/(n+4)}\|f\|_{L^2(\Omega)}^{2(n+3)/(n+4)} \\
    &\quad + CM^{2/(n+4)}\|\Delta v - v\|_{L^2(\Omega)}^{2(n+3)/(n+4)} 
     + C(M^2 + B^2)\rho^{n-2\kappa}(1+s)
  \end{aligned} 
  \end{equation}
  holds for all $\rho\in (0,R)$ and $s\in(0,T_{\max})$.
\end{lemma}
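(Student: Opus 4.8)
The plan is to integrate the pointwise-in-time inequality \eqref{eq: 1/8+1/4} of Lemma~\ref{le: 1/8+1/4} over $t\in(0,s)$. The terms $49\rho^2\|\nabla f\|_{L^2(B_\rho)}^2$, $CB^{4/(n+2)}\|f\|_{W^{1,2}(\Omega)}^{2n/(n+2)}$ and $\sqrt m\,\rho\|g\|_{L^2(B_\rho)}$ pass through as their time integrals, and $2m+C(B^2+M^2)\rho^{n-2\kappa}$ integrates to $2ms+C(B^2+M^2)\rho^{n-2\kappa}s$, which will be absorbed into $C(M^2+B^2)\rho^{n-2\kappa}(1+s)$ using $\rho^{n-2\kappa}\geq R^{n-2\kappa}$ (recall $n-2\kappa<0$). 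The only term needing real work is $\mathcal I(t):=\omega_n\int_0^\rho\eta^{1-n}\int_0^\eta(wv_r)_t r^\alpha\dd r\dd\eta$. I would first apply Fubini in $(r,\eta)$, with $\int_r^\rho\eta^{1-n}\dd\eta=(r^{2-n}-\rho^{2-n})/(n-2)$ and $\alpha+2-n=n$, to rewrite $\mathcal I(t)=\tfrac{\omega_n}{n-2}\int_0^\rho(wv_r)_t\,\chi\dd r$ with the weight $\chi(r):=r^n-\rho^{2-n}r^\alpha$; this $\chi$ satisfies $0\leq\chi\leq r^n$ on $(0,\rho)$, $\chi(0)=\chi(\rho)=0$, $|\chi'|\leq(n+\alpha)r^{n-1}$, and --- crucially --- $r^{2-2n}\chi=r^{2-n}-\rho^{2-n}$, hence $(r^{2-2n}\chi)'=(2-n)r^{1-n}$. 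Since $t\mapsto\int_0^\rho(wv_r)(\cdot,t)\chi\dd r$ is continuous on $[0,T_{\max})$ and $C^1$ on $(0,T_{\max})$ (so that Fubini and the fundamental theorem of calculus are licit down to $t=0$, by Proposition~\ref{prop: local existence and uniqueness} and $\int_0^s\mathcal D\dd t<\infty$), integrating in time gives $\int_0^s\mathcal I(t)\dd t=\tfrac{\omega_n}{n-2}\bigl[\int_0^\rho(wv_r)\chi\dd r\bigr]_{t=0}^{t=s}=:I(s)-I(0)$.

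I would then substitute $w=v-f-\Delta v$, which by \eqref{sym: f} is precisely the transfer of the parabolic operator onto $v$, and split $\tfrac{\omega_n}{n-2}\int_0^\rho(wv_r)\chi\dd r$ into its $\Delta v$-, $vv_r$- and $fv_r$-pieces. For the $\Delta v$-piece, writing $\Delta v=r^{1-n}(v_rr^{n-1})_r$ and integrating by parts (the boundary terms at $r=\rho$ and $r=0$ vanishing since $\chi(\rho)=0$ and $v_r\sim r$ near the origin), the identity $(r^{2-2n}\chi)'=(2-n)r^{1-n}$ produces exactly $\tfrac{\omega_n}{n-2}\bigl(-\int_0^\rho\Delta v\,v_r\chi\dd r\bigr)=-\tfrac12\|\nabla v\|_{L^2(B_\rho)}^2$, which at $t=s$ is nonpositive and I would drop, and at $t=0$ is dominated by $\tfrac12\|\nabla v_0\|_{L^2(\Omega)}^2\leq\tfrac12 A^2$. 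For the $vv_r$-piece, one integration by parts ($vv_r=\tfrac12(v^2)_r$, boundary terms vanishing) together with $|\chi'|\leq(n+\alpha)r^{n-1}$ bounds it by $C\|v\|_{L^2(\Omega)}^2$; at $t=s$, \eqref{eq: v2<deltav} and the elementary inequality $x^a\leq1+x^b$ ($0\leq a\leq b$, here with $a=2n/(n+4)$) turn this into $C\|\Delta v-v\|_{L^2(\Omega)}^{2(n+3)/(n+4)}+C$, and at $t=0$ into $C\|v_0\|_{W^{2,2}(\Omega)}^2\leq CA^2$. For the $fv_r$-piece, $\chi\leq r^n$ and Cauchy--Schwarz give $\tfrac{\omega_n}{n-2}\bigl|\int_0^\rho fv_r\chi\dd r\bigr|\leq\tfrac{R}{n-2}\|f\|_{L^2(\Omega)}\|\nabla v\|_{L^2(\Omega)}$; at $t=s$, \eqref{eq: nablav2<deltav} followed by Young's inequality with conjugate exponents $\tfrac{2(n+3)}{n+4}$ and $\tfrac{2(n+3)}{n+2}$ yields exactly the two non-integrated terms $CM^{2/(n+4)}\|f\|_{L^2(\Omega)}^{2(n+3)/(n+4)}$ and $CM^{2/(n+4)}\|\Delta v-v\|_{L^2(\Omega)}^{2(n+3)/(n+4)}$ appearing in \eqref{eq: 1/8deltavL2}, while at $t=0$, since $\|f_0\|_{L^2(\Omega)}=\|\Delta v_0-v_0+w_0\|_{L^2(\Omega)}\leq CA$, it is merely $\leq CA^2$. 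Collecting: $I(s)$ contributes these two powers of $\|\cdot\|_{L^2(\Omega)}$ plus a quantity $\leq C(M^2+B^2)\rho^{n-2\kappa}$ (into which $CA^2$ and bare constants fit, via $\rho^{n-2\kappa}\geq R^{n-2\kappa}$ and $A^2\leq CB^2$), and $-I(0)\leq C(M^2+B^2)\rho^{n-2\kappa}$; adding the integrated easy terms and the absorption above yields \eqref{eq: 1/8deltavL2}.

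The main obstacle is the term $\mathcal I$. The tempting shortcut --- estimating $\int_0^\rho(wv_r)\chi\dd r$ directly by the pointwise bounds \eqref{eq: pointwise estimates}, $|w|\leq B|x|^{-\kappa}$ and $|v_r|\leq B|x|^{1-\kappa}$ --- fails, because it produces $\int_0^\rho r^{n+1-2\kappa}\dd r$, which diverges at the origin exactly when $\kappa\geq(n+2)/2$, i.e.\ as soon as $n\geq6$ (while we only know $\kappa>n-2$). What makes the argument go through is the substitution $w=v-f-\Delta v$ together with the vanishing of $\chi$ at both endpoints: these route every problematic near-origin contribution into $\|v\|_{L^2(\Omega)}$, $\|\nabla v\|_{L^2(\Omega)}$ and $\|f\|_{L^2(\Omega)}$, which are finite (though not uniformly bounded) and appear only to sub-quadratic powers that the Gagliardo--Nirenberg inequalities of Lemma~\ref{le: GN} can control, the crude pointwise estimates being confined to regions away from the origin where they contribute only harmless $\rho^{n-2\kappa}$. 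A secondary, purely technical point is the legitimacy of Fubini and of the fundamental theorem of calculus for $\mathcal I$ down to $t=0$, which rests on the continuity of $wv_r$ on $\overline\Omega\times[0,T_{\max})$ from Proposition~\ref{prop: local existence and uniqueness} and on $\int_0^s\mathcal D\dd t<\infty$.
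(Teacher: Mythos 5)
Your proposal is correct and follows essentially the same route as the paper: integrate \eqref{eq: 1/8+1/4} in time, reduce the $(wv_r)_t$ term to boundary contributions at $t=0$ and $t=s$, substitute $w=-\Delta v+v-f$ and integrate by parts so that the $\Delta v$-piece yields exactly $-\tfrac12\|\nabla v\|_{L^2(B_\rho)}^2$, then control the remaining pieces by H\"older, \eqref{eq: v2<deltav}--\eqref{eq:v2innergn} and Young. The only (immaterial) differences are that you apply Fubini in $(r,\eta)$ before integrating by parts whereas the paper reuses the identity \eqref{eq: wvrr2n-2} and integrates $\eta^{1-n}(\cdot)$ afterwards, and that you bound $I(0)$ by decomposing $w_0$ rather than by a direct H\"older estimate on $w_0v_{0r}$.
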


\begin{proof}
  Recalling \eqref{eq: wvrr2n-2},
  we compute and estimate by H\"older inequality  
  \begin{align*}
    I &:= \omega_n\int_0^s\int_0^\rho \frac{1}{\eta^{n-1}}\int_0^\eta (w v_r)_tr^\alpha\dd r \dd\eta\dd t \\
    &\;= - \frac{1}{2}\|\nabla v\|_{L^2(B_\rho)}^2 
    + \frac{1}{2}\|v\|_{L^2(B_\rho)}^2 
    - \omega_n\int_0^\rho \frac{n-1}{\eta^{n-1}}\int_0^\eta v^2r^{\alpha-1}\dd r\dd\eta\\
    &\quad\;\,- \omega_n\int_0^\rho \frac{1}{\eta^{n-1}}\int_0^\eta fv_rr^\alpha\dd r\dd\eta 
     - \omega_n\int_0^\rho \frac{1}{\eta^{n-1}}\int_0^\eta (w_0 v_{0r})r^\alpha\dd r \dd\eta\\
    &\;\leq - \frac{1}{2}\|\nabla v\|_{L^2(B_\rho)}^2 
     + \frac{1}{2}\|v\|_{L^2(B_\rho)}^2 
     + \rho\|f\|_{L^2(B_\rho)}\|\nabla v\|_{L^2(B_\rho)}
    + \rho\|w_0\|_{L^2(B_\rho)}\|\nabla v_0\|_{L^2(B_\rho)}
  \end{align*}
  for all $\rho\in(0,R)$ and $s\in(0,T_{\max})$.
  Using \eqref{eq:v2innergn}, \eqref{eq: nablav2<deltav} and \eqref{eq: initial constraints}, 
  we have 
  \begin{equation*}
    \begin{aligned}
      I &\leq \rho CM^{2/(n+4)}\|f\|_{L^2(\Omega)}\|\Delta v - v\|_{L^2(\Omega)}^{(n+2)/(n+4)} 
      + \rho A^2 
      + C(M^2 + B^2)\rho^{n-2\kappa}\\
      &\leq CM^{2/(n+4)}\|f\|_{L^2(\Omega)}^{2(n+3)/(n+4)} 
      + CM^{2/(n+4)}\|\Delta v - v\|_{L^2(\Omega)}^{2(n+3)/(n+4)}\\ 
      &\quad + C(M^2 + B^2)\rho^{n-2\kappa}
    \end{aligned}
  \end{equation*}
  for all $\rho\in(0,R)$ and $s\in(0,T_{\max})$.
  This entails \eqref{eq: 1/8deltavL2} by integrating \eqref{eq: 1/8+1/4} over $(0,s)\subset(0,T_{\max})$.
\end{proof}

\subsection{Refined spatial-temporal estimates of \texorpdfstring{$uv$}{uv}} 
Derivation of a closed form of certain integral inequality for the quantity $\int_\Omega uv$ relies on Lemma~\ref{le: uvL1smallball} and Lemma~\ref{le: deltavL2}, 
both of which state that for $s\in(0,T_{\max})$, the integral $\int_0^s\int_\Omega uv$ has an upper bound in the form a combination of dissipation rate terms and concentration components from energy.

We note by Young inequality 
  \begin{equation} 
    \label{eq: a young inequality}
  a^\gamma b^{1-\gamma} 
  = a^{\gamma}b^{\frac{\gamma(1-\lambda)}{\lambda}}
  \cdot b^{\frac{(\lambda-\gamma)}{\lambda}}
  \leq a^\lambda b^{1-\lambda} + b 
  \quad \text{for all } a, b \geq 0 \text{ and } 0 < \gamma \leq \lambda \leq 1,
  \end{equation}
which will be frequently used. 
For brevity, we also abbreviate $\Omega_T := (0,T)\times\Omega$ for $T>0$.

\begin{lemma}
  \label{le: 1/24mathcalF}
  Let $(u_0,v_0,w_0)\in\mathcal{B}(\tilde{m}, A)$. Then there exist $C > 0$ such that 
  \begin{equation*}
    \begin{aligned}
      \|uv\|_{L^1(\Omega_s)}  
      &\leq C\rho^2\|\nabla f\|_{L^2(\Omega_s)}^2 
      + C\sqrt{ms}\|g\|_{L^2(\Omega_s)} 
       +  CB^{4/(n+2)}s^{2/(n+2)} \|f\|_{L^2((0,s);W^{1,2}(\Omega))}^{2n/(n+2)}\\
      &\quad + CM^{2/(n+4)}\|\Delta v - v\|_{L^2(\Omega)}^{2(n+3)/(n+4)}
      + CM^{2/(n+4)} \|f\|_{L^2(\Omega)}^{2(n+3)/(n+4)}\\
      &\quad + C(M^2+B^2)(1+s)\rho^{n-2\kappa}
    \end{aligned}
   \end{equation*}
   holds for all $\rho\in (0, R)$ and $s\in(0,T_{\max})$.
\end{lemma}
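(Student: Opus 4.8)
The plan is to derive Lemma~\ref{le: 1/24mathcalF} by feeding Lemma~\ref{le: deltavL2} into Lemma~\ref{le: uvL1smallball}; once the localized $L^2$-quantities have been absorbed, everything that remains is bookkeeping with H\"older's inequality in time and the elementary Young-type inequality \eqref{eq: a young inequality}.

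First I would take \eqref{eq: intuvleq} as the starting point and observe that the only terms not already of the desired shape are the two leading ones $2\int_0^s\|\Delta v\|_{L^2(B_\rho)}^2\dd t$ and $2\int_0^s\|v\|_{L^2(B_\rho)}^2\dd t$. For the latter I integrate \eqref{eq:v2innergn} in time to get $2\int_0^s\|v\|_{L^2(B_\rho)}^2\dd t\le\int_0^s\|\nabla v\|_{L^2(B_\rho)}^2\dd t+C(M^2+B^2)\rho^{n-2\kappa}s$, so that the leading contribution of \eqref{eq: intuvleq} is dominated by $2\int_0^s\|\Delta v\|_{L^2(B_\rho)}^2\dd t+\int_0^s\|\nabla v\|_{L^2(B_\rho)}^2\dd t\le 16\bigl(\tfrac18\int_0^s\|\Delta v\|_{L^2(B_\rho)}^2\dd t+\tfrac14\int_0^s\|\nabla v\|_{L^2(B_\rho)}^2\dd t\bigr)$, which is exactly $16$ times the left-hand side of \eqref{eq: 1/8deltavL2}. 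Applying Lemma~\ref{le: deltavL2} bounds this by $16$ times the right-hand side of \eqref{eq: 1/8deltavL2}, and then I pass from localized to global space–time norms: since $B_\rho\subseteq\Omega$ and $\rho<R$ one replaces each $\|\cdot\|_{L^2(B_\rho)}$ by $\|\cdot\|_{L^2(\Omega)}$; Cauchy–Schwarz in $t$ turns $\rho\sqrt m\int_0^s\|g\|_{L^2(B_\rho)}\dd t$ into $C\sqrt{ms}\,\|g\|_{L^2(\Omega_s)}$; H\"older in $t$ with exponents $\tfrac{n+2}{n}$ and $\tfrac{n+2}{2}$ turns $\int_0^s\|f\|_{W^{1,2}(\Omega)}^{2n/(n+2)}\dd t$ into $s^{2/(n+2)}\|f\|_{L^2((0,s);W^{1,2}(\Omega))}^{2n/(n+2)}$, producing the stated $s^{2/(n+2)}$ factor; and $49\rho^2\int_0^s\|\nabla f\|_{L^2(B_\rho)}^2\dd t$ becomes $C\rho^2\|\nabla f\|_{L^2(\Omega_s)}^2$. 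The two $\|f\|_{L^2(\Omega)}^{2(n+3)/(n+4)}$ and $\|\Delta v-v\|_{L^2(\Omega)}^{2(n+3)/(n+4)}$ contributions and the remainder $C(M^2+B^2)(1+s)\rho^{n-2\kappa}$ carry over directly.

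It then remains to absorb the residual ``good'' terms of \eqref{eq: intuvleq}. Using $m<\tilde m\le M$ together with $B=C_0(\tilde m+A)$ being comparable to $M$, one has $CB(m+B)\rho^{n-2\kappa}s\le C(M^2+B^2)(1+s)\rho^{n-2\kappa}$. For $CM^{4/(n+4)}\|\Delta v-v\|_{L^2(\Omega)}^{2(n+2)/(n+4)}$ and $CM^{4/(n+4)}\|f\|_{L^2(\Omega)}^{2(n+2)/(n+4)}$, note that the exponent $\tfrac{2(n+2)}{n+4}$ is strictly below $\tfrac{2(n+3)}{n+4}$, so \eqref{eq: a young inequality} (or plain Young) gives $\|\Delta v-v\|_{L^2(\Omega)}^{2(n+2)/(n+4)}\le\tfrac12\|\Delta v-v\|_{L^2(\Omega)}^{2(n+3)/(n+4)}+C$ and likewise for $\|f\|_{L^2(\Omega)}$; these are swallowed by the $\tfrac{2(n+3)}{n+4}$-power terms and a constant. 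That constant, together with the $2ms$ arising from integrating the $2m$ in \eqref{eq: 1/8+1/4}, is itself at most $C(M^2+B^2)(1+s)\rho^{n-2\kappa}$, because $\kappa>n-2$ forces $n-2\kappa<0$ and hence $\rho^{n-2\kappa}\ge R^{n-2\kappa}>0$ for $\rho\in(0,R)$, while $C$ is allowed to depend on the fixed parameters $\tilde m,A,R$. Collecting all of the above yields the claimed estimate.

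The main obstacle I anticipate is purely organizational: matching the exponents $\tfrac{2(n+2)}{n+4}$ versus $\tfrac{2(n+3)}{n+4}$ and the $M$-powers $M^{4/(n+4)}$ versus $M^{2/(n+4)}$ between the two input lemmas, and being careful to route every ``constant'' term into $C(M^2+B^2)(1+s)\rho^{n-2\kappa}$ via the lower bound $\rho^{n-2\kappa}\ge R^{n-2\kappa}$ and the freedom to enlarge $C$ in terms of $\tilde m,A,R$; no genuinely new analytic estimate is required beyond Lemmas~\ref{le: GN}, \ref{le: uvL1smallball} and~\ref{le: deltavL2}.
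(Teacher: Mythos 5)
Your proposal is correct and follows essentially the same route as the paper: the paper likewise combines Lemma~\ref{le: uvL1smallball} with Lemma~\ref{le: deltavL2} (merely rescaling \eqref{eq: intuvleq} by $1/16$ instead of \eqref{eq: 1/8deltavL2} by $16$), absorbs $\|v\|_{L^2(B_\rho)}^2$ via \eqref{eq:v2innergn}, and finishes with H\"older in time and the Young inequality \eqref{eq: a young inequality} to reconcile the exponents $2(n+2)/(n+4)$ and $2(n+3)/(n+4)$. Your bookkeeping of the residual constants into $C(M^2+B^2)(1+s)\rho^{n-2\kappa}$ via $\rho^{n-2\kappa}\ge R^{n-2\kappa}$ matches what the paper does implicitly.
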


\begin{proof}
  Lemma~\ref{le: uvL1smallball} and Lemma~\ref{le: deltavL2} entail
  there exists $C > 0$ such that 
  \begin{equation} 
    \label{eq: 1/16+3/4}
  \begin{aligned}
    &\quad \frac{1}{16}\int_0^s\int_{\Omega} uv \dd x \dd t 
    + \frac{1}{4} \int_0^s \|\nabla v\|_{L^2(B_\rho)}^2 \dd t
    - \frac{1}{8}\int_0^s\|v\|_{L^2(B_\rho)}^2 \dd t\\
    &\leq 49\rho^2\int_0^s\|\nabla f\|_{L^2(B_\rho)}^2 \dd t 
    +  CB^{4/(n+2)}\int_0^s \|f\|_{W^{1,2}(\Omega)}^{2n/(n+2)}\dd t\\
    &\quad + \rho\sqrt{ms}\int_0^s\|g\|_{L^2(B_\rho)} \dd t
     + C(B^2 + M^2)\rho^{n-2\kappa}(1 + s)\\
    &\quad + CM^{2/(n+4)}\|\Delta v - v\|_{L^2(\Omega)}^{2(n+3)/(n+4)} 
    + CM^{2/(n+4)}\|f\|_{L^2(\Omega)}^{2(n+3)/(n+4)} \\
    &\quad + C M^{4/(n+4)}\|\Delta v - v\|_{L^2(\Omega)}^{2(n+2)/(n+4)} 
    + C M^{4/(n+4)}\|f\|_{L^2(\Omega)}^{2(n+2)/(n+4)}
  \end{aligned} 
\end{equation}
  holds for all $\rho\in (0, R)$ and $s\in(0,T_{\max})$.
  Using \eqref{eq:v2innergn}, \eqref{eq: a young inequality} and H\"older inequality, we reduce \eqref{eq: 1/16+3/4} into 
  \begin{equation}
    \begin{aligned}
      \frac{1}{16}\|uv\|_{L^1(\Omega_s)} 
      &\leq 49\rho^2\|\nabla f\|_{L^2(\Omega_s)}^2
      +  CB^{4/(n+2)}s^{2/(n+2)}\|f\|_{L^2((0,s);W^{1,2}(\Omega))}^{2n/(n+2)}\\
      &\quad + \rho\sqrt{ms}\|g\|_{L^2(\Omega_s)}
       + C(B^2 + M^2)\rho^{n-2\kappa}(1 + s)\\
      &\quad + CM^{2/(n+4)}\|\Delta v - v\|_{L^2(\Omega)}^{2(n+3)/(n+4)} 
      + CM^{2/(n+4)}\|f\|_{L^2(\Omega)}^{2(n+3)/(n+4)}      
    \end{aligned}
  \end{equation}
  holds for all $\rho\in (0, R)$ and $s\in(0,T_{\max})$.
\end{proof}

As the third advantage of the Poho\v{z}aev-type test procedure that generates a small factor $\rho^2$ in the critical term $\rho^2\|\nabla f\|_{L^2(\Omega_s)}^2$,
we shall see that $\rho = \rho(s)$ can be adaptively selected depending on $\|\nabla f\|_{L^2(\Omega_s)}$ 
to effectively convert this quadratic term into a sublinear power of the dissipation rate quantity $\|\nabla f\|_{L^2(\Omega_s)}^2$.

\begin{lemma}
  \label{le: 0suv uvtheta}
  Let $(u_0,v_0,w_0)\in\mathcal{B}(\tilde{m}, A)$. 
  Then there exist $\theta\in(0,1)$ and $C > 0$ such that 
  \begin{equation}
    \label{eq: 0suv uvtheta}
    \int_0^s\int_\Omega uv\dd x\dd t 
    \leq C(M^2+B^2+1)(1+s) \left(\mathcal{F}(0) + \int_\Omega uv - \int_\Omega u\ln u + 1\right)^\theta
  \end{equation}
  holds for all $s\in(0,T_{\max})$, 
  where $\mathcal{F}$ is specified as in \eqref{sym: mathacl F D}.
\end{lemma}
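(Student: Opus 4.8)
The plan is to insert the estimate of Lemma~\ref{le: 1/24mathcalF} into the energy identity \eqref{eq: energyequation} and then to choose the localization radius $\rho$ adaptively, following the strategy of Winkler~\cite{Winkler2013}. First I will turn the energy identity into a reservoir: integrating \eqref{eq: energyequation} over $(0,s)$ and using the explicit form of $\mathcal F$ from \eqref{sym: mathacl F D}, one obtains for every $s\in(0,T_{\max})$ the elementary identity
\begin{equation*}
  \mathcal E(s) := \mathcal F(0) + \int_\Omega uv - \int_\Omega u\ln u + 1
  = \int_0^s \mathcal D(t)\dd t + \tfrac12\|f(\cdot,s)\|_{L^2(\Omega)}^2 + \tfrac12\|\Delta v(\cdot,s)-v(\cdot,s)\|_{L^2(\Omega)}^2 + 1 \ge 1,
\end{equation*}
the lower bound being immediate from $\mathcal D\ge0$. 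Since $\mathcal D(t)= 2\|\nabla f\|_{L^2(\Omega)}^2 + 2\|f\|_{L^2(\Omega)}^2 + \|g\|_{L^2(\Omega)}^2$, the single quantity $\mathcal E(s)$ controls, up to the factor $2$, \emph{every} dissipation-type and energy-type term on the right-hand side of Lemma~\ref{le: 1/24mathcalF}: indeed $\int_0^s\|\nabla f\|_{L^2(\Omega)}^2\dd t$, $\int_0^s\|f\|_{W^{1,2}(\Omega)}^2\dd t$, $\|g\|_{L^2(\Omega_s)}^2$, $\|f(\cdot,s)\|_{L^2(\Omega)}^2$ and $\|\Delta v(\cdot,s)-v(\cdot,s)\|_{L^2(\Omega)}^2$ are all $\le 2\mathcal E(s)$.

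Next I will substitute these bounds into Lemma~\ref{le: 1/24mathcalF}, estimate $\sqrt m$, $M^{2/(n+4)}$, $B^{4/(n+2)}\le M^2+B^2+1$ and $\sqrt s$, $s^{2/(n+2)}\le 1+s$, and use $\mathcal E(s)\ge1$ to replace the finitely many powers $\mathcal E(s)^{1/2}$, $\mathcal E(s)^{n/(n+2)}$, $\mathcal E(s)^{(n+3)/(n+4)}$ by the single largest power $\mathcal E(s)^{\theta_0}$ with $\theta_0:=\tfrac{n+3}{n+4}<1$. This leaves an inequality of the schematic shape
\begin{equation*}
  \|uv\|_{L^1(\Omega_s)} \le C_1\,\rho^2\,\mathcal E(s) + C_2(M^2+B^2+1)(1+s)\,\mathcal E(s)^{\theta_0} + C_3(M^2+B^2)(1+s)\,\rho^{\,n-2\kappa}, \qquad \rho\in(0,R),
\end{equation*}
with absolute constants $C_1,C_2,C_3>0$. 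Since $\kappa>n-2$ and $n\ge5$ one has $n-2\kappa<-1$, so the last term tends to $+\infty$ as $\rho\searrow0$ while the first tends to $0$, and the task is now to balance them.

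The main---and essentially the only---difficulty lies in this $\rho$-selection. Put $a:=2\kappa-n>0$ and take
\begin{equation*}
  \rho := \min\Bigl\{ R,\ \bigl((M^2+B^2)(1+s)/\mathcal E(s)\bigr)^{1/(a+2)}\Bigr\}.
\end{equation*}
If the minimum equals the second value, then $C_1\rho^2\mathcal E(s)$ and $C_3(M^2+B^2)(1+s)\rho^{\,n-2\kappa}$ are both comparable to $(M^2+B^2)^{2/(a+2)}(1+s)^{2/(a+2)}\mathcal E(s)^{a/(a+2)}$, hence $\le C(M^2+B^2+1)(1+s)\,\mathcal E(s)^{a/(a+2)}$ because $\tfrac{2}{a+2}\le1$; if the minimum equals $R$, then by construction $\mathcal E(s)\le R^{-(a+2)}(M^2+B^2)(1+s)$, so with $\rho=R$ the two $\rho$-dependent terms add up to at most $C(M^2+B^2+1)(1+s)$, which, using once more $\mathcal E(s)\ge1$, is again $\le C(M^2+B^2+1)(1+s)\mathcal E(s)^{a/(a+2)}$. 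In either case the $\rho$-dependent part is dominated by $C(M^2+B^2+1)(1+s)\mathcal E(s)^{a/(a+2)}$; adding the $\theta_0$-term and setting $\theta:=\max\bigl\{\tfrac{n+3}{n+4},\ \tfrac{2\kappa-n}{2\kappa-n+2}\bigr\}\in(0,1)$---a fixed number, as $\kappa$ is fixed---yields exactly \eqref{eq: 0suv uvtheta}, since $\mathcal E(s)$ is precisely the bracketed quantity there. I expect the genuinely delicate step to be the case where the unconstrained optimizer exceeds $R$: there one has no room left in $\rho$ and must instead exploit the a priori bound on $\mathcal E(s)$ that this very regime furnishes, combined with $\mathcal E(s)\ge1$, to keep the exponent of $\mathcal E(s)$ strictly below $1$; the remainder is routine tracking of the absolute constants and of the finitely many admissible exponents.
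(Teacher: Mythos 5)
Your proposal is correct and follows essentially the same route as the paper: it feeds Lemma~\ref{le: 1/24mathcalF} into the energy identity \eqref{eq: energyequation}, uses $\mathcal E(s)\geq 1$ to unify the various sublinear powers, and chooses the localization radius $\rho$ adaptively to sublinearize the $\rho^2\|\nabla f\|_{L^2(\Omega_s)}^2$ term. The only (harmless) difference is that you balance $\rho^2\mathcal E(s)$ directly against the error term $(M^2+B^2)(1+s)\rho^{n-2\kappa}$, obtaining the exponent $(2\kappa-n)/(2\kappa-n+2)$, whereas the paper takes $\rho=\min\{R/2,\|\nabla f\|_{L^2(\Omega_s)}^{-1/(2\kappa-n)}\}$ and lands on $\vartheta=1-1/(2\kappa-n)$; both exponents lie in $(0,1)$, which is all the lemma requires.
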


 \begin{proof} 
 Put  
 \begin{displaymath}
  \rho(s) :=
  \min\left\{\frac{R}{2}, \|\nabla f\|_{L^2(\Omega_s)}^{-\frac{1}{2\kappa-n}}\right\} 
  \quad \text{for } s\in(0,T_{\max}),
 \end{displaymath}
 and let  
 \begin{displaymath}
  \theta := 
  \max\left\{ 
    \frac{n+3}{n+4} , \vartheta\right\}\quad\text{with } \vartheta := 1- \frac{1}{2\kappa-n} \in (0,1)\text{ due to } \kappa > n-2.
 \end{displaymath}
 In the case of $\rho(s) < R/2$ for some $s\in(0,T_{\max})$,
 we may infer the existence of $ C > 0$ from Lemma~\ref{le: 1/24mathcalF} such that
 \begin{equation*}
  \begin{aligned}
    \frac{1}{C}\|uv\|_{L^1(\Omega_s)}  
    &\leq \|\nabla f\|_{L^2(\Omega_s)}^{2\vartheta} 
    + \sqrt {ms}\|g\|_{L^2(\Omega_s)} 
    + B^{4/(n+2)}s^{2/(n+2)} \|f\|_{L^2((0,s);W^{1,2}(\Omega))}^{2n/(n+2)}\\ 
    &\quad + M^{2/(n+4)}\|\Delta v - v\|_{L^2(\Omega)}^{2(n+3)/(n+4)}
      + M^{2/(n+4)} \|f\|_{L^2(\Omega)}^{2(n+3)/(n+4)}\\ 
    &\quad + (M^2 + B^2)(1+s)\|\nabla f\|_{L^2(\Omega_s)}
  \end{aligned}
 \end{equation*} 
 holds.
Using the Young inequality \eqref{eq: a young inequality}, we estimate
\begin{equation*}
  \begin{aligned}
    \frac{1}{C}\|uv\|_{L^1(\Omega_s)}  
    &\leq \|\nabla f\|_{L^2(\Omega_s)}^{2\theta} + 1
    + m^{(1-\theta)}s^{1-\theta}\|g\|_{L^2(\Omega_s)}^{2\theta} + ms \\
    &\quad + B^{2(1-\theta)}s^{1-\theta} \|f\|_{L^2((0,s);W^{1,2}(\Omega))}^{2\theta} 
    + B^2s\\ 
    &\quad + M^{2(1-\theta)}\|\Delta v - v\|_{L^2(\Omega)}^{2\theta} + M^2
      + M^{2(1-\theta)} \|f\|_{L^2(\Omega)}^{2\theta} + M^2\\ 
    &\quad + (M^2 + B^2)(1+s)\|\nabla f\|_{L^2(\Omega_s)}^{2\theta} + (M^2 + B^2)(1+s)\\
    &\leq 100(M^2+B^2+1)(1+s)\left(\|f\|_{L^2((0,s);W^{1,2}(\Omega))}^{2\theta} + \|g\|_{L^2(\Omega_s)}^{2\theta}\right)\\
    &\quad + 2(M^2+B^2+1)(1+s)\left(\|\Delta v - v\|_{L^2(\Omega)}^{2\theta} + \|f\|_{L^2(\Omega)}^{2\theta}\right)\\ 
    &\quad + 10(M^2+B^2+1)(1+s).
  \end{aligned}
 \end{equation*} 
 Write $\Pi = \Pi(s) := 1000(M^2+B^2+1)(1+s)$. 
 Using the concavity of the mapping $[0,\infty)\ni\tau\mapsto\tau^\theta\in[0,\infty)$,
 we have 
 \begin{equation*}
  \begin{aligned}
    \frac{1}{C}\|uv\|_{L^1(\Omega_s)}  
    &\leq \Pi\left(\|f\|_{L^2((0,s);W^{1,2}(\Omega))}^{2} 
    + \|g\|_{L^2(\Omega_s)}^{2} 
    + \frac{\|\Delta v - v\|_{L^2(\Omega)}^{2}}{2} 
    + \frac{\|f\|_{L^2(\Omega)}^{2}}{2} + 1\right)^\theta\\
    &\leq \Pi\left(\int_0^s\mathcal{D}(t)\dd t 
    + \frac{\|\Delta v - v\|_{L^2(\Omega)}^{2}}{2} 
    + \frac{\|f\|_{L^2(\Omega)}^{2}}{2} + 1\right)^\theta\\
    & = \Pi \left(\mathcal{F}(0) - \mathcal{F}(s) 
    + \frac{\|\Delta v - v\|_{L^2(\Omega)}^{2}}{2} 
    + \frac{\|f\|_{L^2(\Omega)}^{2}}{2} + 1\right)^\theta\\  
    & = \Pi\left(\mathcal{F}(0) + \int_\Omega uv - \int_\Omega u\ln u + 1\right)^\theta,
  \end{aligned}
 \end{equation*}
which warrants \eqref{eq: 0suv uvtheta} in the case of $\rho(s) < R/2$ for some $s\in(0,T_{\max})$.

On the other hand, if $\rho(s) = R/2$ for some $s\in(0,T_{\max})$, 
then Lemma~\ref{le: 1/24mathcalF} ensures the existence of $ C > 0$ with the property that
 \begin{equation*}
  \begin{aligned}
    \frac{1}{C}\|uv\|_{L^1(\Omega_s)}  
    &\leq \|\nabla f\|_{L^2(\Omega_s)}^{2\vartheta} 
    + \sqrt {ms}\|g\|_{L^2(\Omega_s)} 
    + B^{4/(n+2)}s^{2/(n+2)} \|f\|_{L^2((0,s);W^{1,2}(\Omega))}^{2n/(n+2)}\\ 
    &\quad + M^{2/(n+4)}\|\Delta v - v\|_{L^2(\Omega)}^{2(n+3)/(n+4)}
      + M^{2/(n+4)} \|f\|_{L^2(\Omega)}^{2(n+3)/(n+4)}\\ 
    &\quad + 2^{2\kappa-n}R^{n-2\kappa}(M^2 + B^2)(1+s)
  \end{aligned}
 \end{equation*} 
 holds. Therefore, \eqref{eq: 0suv uvtheta} can be verified in the same manner as above.
\end{proof}

Now we are in a position to show Proposition~\ref{prop: an integral inequality}.
\begin{proof}[Proof of Proposition~\ref{prop: an integral inequality}]
  It is clear that both $M$ and $B$ can be bounded from above by $\tilde{m} + A$ up to a positive multiple, according to the definitions of $M$ and $B$ at \eqref{eq: constraint of mass} and \eqref{eq: pointwise estimates}, respectively. 
  So Lemma~\ref{le: 0suv uvtheta} entails Proposition~\ref{prop: an integral inequality}.
\end{proof}

\section{Finite-time blowup enforced by low initial energy}
\label{sec: finite-time blowup}

This section is devoted to the proof of Theorem~\ref{thm: low energy enforced finite-time blowup}. 
For initial data $(u_0, v_0,w_0)$ with low energy $\mathcal{F}(u_0,v_0,w_0)$, we may deduce a superlinear differential inequality involving the quantity $\int_0^s\int_\Omega uv$ from Proposition~\ref{prop: an integral inequality},
which implies that $(u,v,w)$ cannot exist globally.

\begin{proof}[Proof of Theorem~\ref{thm: low energy enforced finite-time blowup}]
  Fix $\tilde{m} > 0$ and $A > 0$.  
  Proposition~\ref{prop: an integral inequality} provides us constants $C>0$ and $\theta\in(0,1)$ such that 
  \begin{equation}
    \label{eq: intuvf0theta}
    \int_0^s\int_\Omega uv\dd x\dd s 
    \leq C(\tilde{m} + A + 1)^2(1+s) \left(\mathcal{F}(0) + \int_\Omega uv - \int_\Omega u\ln u + 1\right)^\theta
  \end{equation} 
  holds for all $s\in(0,T_{\max})$.
  Fix 
  \begin{equation}
    \label{sym: ell}
    \ell > 2C^{\frac{1}{1-\theta}}(\tilde{m}+A+1)^{\frac{2}{1-\theta}} + 1.
  \end{equation}
  We claim that if 
  \begin{equation}
    \label{eq: conditions on initial energy}
    \mathcal{F}(0) < -\ell - |\Omega|/e := - K(\tilde{m},A),
  \end{equation}
  then 
  \begin{equation}
    \label{eq: Tmax<TmA}
    \begin{aligned} 
    T_{\max} 
    &\leq \left(2^{\frac{\theta-1}{\theta}} - C^{\frac{1}{\theta}}(\tilde{m}+A+1)^{\frac{2}{\theta}}\ell^{\frac{\theta-1}{\theta}}\right)^{\frac{\theta}{\theta-1}} - 1
    =: T(\tilde{m}, A).
    \end{aligned}
  \end{equation}
  We note that the choice of $\ell$ according to \eqref{sym: ell} warrants $T(\tilde{m}, A) \in(1,\infty)$.

  Suppose (by absurdum) that \eqref{eq: Tmax<TmA} is violated.
  Then, we have $T_{\max} > T(\tilde{m},A)$ and in particular, 
  the function
  \begin{equation*}
    \Psi (s) := \int_0^s\int_\Omega u(v-\ln u)\dd x\dd t + \mathcal{F}(0)s + \ell s
    \quad \text{for } s\in(0,T_{\max})
  \end{equation*}
  is well-defined with 
  \begin{equation}
    \label{eq: psiTmA is finite}
    \Psi(T(\tilde{m},A)) < \infty,
  \end{equation}
  according to Proposition~\ref{prop: local existence and uniqueness}.
Clearly, $\Psi\in C^0([0,T_{\max}))\cap C^1((0,T_{\max}))$ is strictly increasing, 
since the dissipation of energy \eqref{eq: energyequation} implies 
\begin{equation}
  \label{eq: psi'>0}
  \begin{aligned}
  \Psi'(s) 
  &= \int_\Omega uv - \int_\Omega u\ln u + \mathcal{F}(0) + \ell\\
  &\geq \mathcal{F}(0) - \mathcal{F}(s) + \ell \geq \ell
  \quad \text{for } s\in(0,T_{\max})
  \end{aligned}
\end{equation}
and particularly
\begin{equation}
  \label{eq: psi > ls}
  \Psi(s) \geq \ell s
  \quad \text{for } s\in(0,T_{\max}).
\end{equation}
Using \eqref{eq: intuvf0theta} and \eqref{eq: conditions on initial energy}, we arrive at 
\begin{equation}
  \label{eq: a differential inequality}
  \begin{aligned} 
  \Psi 
  &\leq C(\tilde{m}+A+1)^2(1+s)\left(\mathcal{F}(0) + \int_\Omega uv - \int_\Omega u\ln u + 1\right)^\theta\\
  &\quad + (\mathcal{F}(0) + |\Omega|/e + \ell)s \\
  &\leq C(\tilde{m}+A+1)^2(1+s)\Psi'^\theta(s)
  \quad \text{for } s\in(0,T_{\max}),
  \end{aligned}
\end{equation}
where we also use the elementary inequality $\xi\ln \xi \geq -1/e$ valid for all $\xi > 0$.
The positivity \eqref{eq: psi'>0} of $\Psi'$ and the pointwise lower estimates \eqref{eq: psi > ls} allows us to invert the inequality \eqref{eq: a differential inequality}, 
and conclude that the mapping $[1,T(\tilde{m}, A))\ni s\mapsto\Psi(s)\in[\ell,\infty)$ is a supersolution of 
the nonautonomous ordinary differential equation with superlinear growth 
\begin{equation}
  \label{sys: NODE}
  \begin{cases}
  \Phi' = C^{-1/\theta}(\tilde{m}+A+1)^{-2/\theta}(1+s)^{-1/\theta}\Phi^{1/\theta}, & s>1,\\
  \Phi(1) = \ell.
  \end{cases}
\end{equation} 
The function 
\begin{equation*}
  \begin{aligned}
  \Phi(s) = \Big(\ell^{\frac{\theta-1}{\theta}} 
  & - 2^{\frac{\theta-1}{\theta}}C^{-\frac{1}{\theta}}(\tilde{m}+A+1)^{-\frac{2}{\theta}} \\
  & + (1+s)^{\frac{\theta-1}{\theta}}C^{-\frac{1}{\theta}}(\tilde{m}+A+1)^{-\frac{2}{\theta}} 
  \Big)^{\frac{\theta}{\theta-1}}
  \quad\text{for } s\in[1,T(\tilde{m},A)),
  \end{aligned}
\end{equation*} 
solves \eqref{sys: NODE}
with the property that 
\begin{equation}
  \label{eq: phi goes to infinite}
  \Phi(s) \nearrow \infty, \quad\text{as } s\nearrow T(\tilde{m},A).
\end{equation}
While a direct comparison argument ensures 
\begin{equation*}
  \Psi(s) \geq \Phi(s) 
  \quad \text{for } s\in(1,T(\tilde{m}, A)),
\end{equation*}
which is absurd in view of \eqref{eq: psiTmA is finite} and \eqref{eq: phi goes to infinite}.
\end{proof}

\section{Initial data with large negative energy}
\label{sec: initial data}

This section is devoted to the existence of initial data with low energy.

\begin{proof}[Proof of Theorem~\ref{thm: dense initial data for blowup}]
  Let $\phi\in C_0^\infty(\mathbb R^n)$ be a radially symmetric and nonnegative function, 
which is compactly supported in $B_1$ such that $\int_{\mathbb R^n}\phi = 1$.
Fixing $\gamma>0$ and $\eta_\star = \min\{1/2,R\}$,
we define for $\eta\in(0, 1)$,
  \begin{displaymath}
    u_\eta(x) := u_0(x) 
    + \left(\ln\frac1\eta\right)^{2\gamma}\eta^{-\frac{n}{2}-2}\phi\left(\frac{x}{\eta}\right),
    \quad x\in\overline{\Omega},
  \end{displaymath}
  and 
  \begin{displaymath}
    v_\eta(x) := v_0(x) 
    + \left(\ln\frac1\eta\right)^{-\gamma} 
    \eta^{2-\frac{n}{2}}
    \phi\left(\frac{x}{\eta}\right),\quad x\in\overline{\Omega},
  \end{displaymath}
  as well as 
  \begin{displaymath}
    w_\eta(x) = w_0(x) + \left(\ln\frac1\eta\right)^{-\gamma} 
    \eta^{2-\frac{n}{2}}\phi\left(\frac{x}{\eta}\right),
    \quad x\in\overline{\Omega}.
  \end{displaymath}
  It is clear that $(u_\eta, v_\eta, w_\eta)$ complies with \eqref{h: initial data} for all $\eta\in(0, \eta_\star)$.
Since 
\begin{displaymath}
  \|v_\eta-v_0\|_{W^{2,2}(\Omega)}
  = \|w_\eta-w_0\|_{W^{2,2}(\Omega)}
  \leq \left(\ln\frac1\eta\right)^{-\gamma} \|\phi\|_{W^{2,2}(B_1)}
\end{displaymath}
are valid for all $\eta\in(0, \eta_\star)$,
$\gamma > 0$ ensures \eqref{eq: vetatov0}.
A direct computation can verify
\begin{displaymath}
  \|u_\eta-u_0\|_{L^p(\Omega)} 
  \leq \left(\ln\frac1\eta\right)^{2\gamma} \eta^{-2-\frac{n}{2}+\frac{n}{p}}\|\phi\|_{L^p(B_1)}
  \quad \text{for all } \eta\in(0, \eta_\star)
\end{displaymath}
and \eqref{eq: uetatou0} follows due to $-2-n/2+n/p>0$ and $n\geq5$.
To verify \eqref{eq: mathcalFuetaveta},
it remains to evaluate $ - \int_\Omega u_\eta v_\eta$ from $\mathcal{F}$ as follows,
  \begin{equation}
  \label{eq: uetaveta}
    \int_\Omega u_\eta v_\eta 
    \geq \left(\ln\frac1\eta\right)^{\gamma}\int_\Omega \phi^{2}\left(\frac{x}{\eta}\right)\eta^{-n}\dd x 
    = \left(\ln\frac1\eta\right)^{\gamma} \|\phi\|_{L^2(B_1)}^2
    \quad \text{for all } \eta\in(0,\eta_\star),
  \end{equation}
  since \eqref{eq: uetatou0} and \eqref{eq: vetatov0} ensure other terms 
  \begin{displaymath}
    \int_\Omega u_\eta \ln u_\eta 
    + \frac{1}{2}\int_\Omega |\Delta v_\eta - v_\eta + w_\eta|^2
    + \frac{1}{2}\int_\Omega |\Delta v_\eta - v_\eta|^2  
  \end{displaymath}
  from $\mathcal{F}$ are uniformly bounded with respect to $\eta\in(0,\eta_\star)$.
In view of $\gamma > 0$, 
we have $\int_\Omega u_\eta v_\eta\to\infty$ as $\eta\searrow0$ according to \eqref{eq: uetaveta} and finish the proof.
\end{proof}

\section*{Acknowledgments}

Supported in part by National Natural Science Foundation of China (No. 12271092, No. 11671079) and the Jiangsu Provincial Scientific Research Center of Applied Mathematics (No. BK20233002).

\end{document}